\numberwithin{equation}{section}
\numberwithin{figure}{section}
\theoremstyle{plain}
\newtheorem{thm}{\protect\theoremname}[section]
\theoremstyle{plain}
\newtheorem{assumption}[thm]{\protect\assumptionname}
\theoremstyle{plain}
\newtheorem{lem}[thm]{\protect\lemmaname}
\theoremstyle{definition}
\newtheorem{example}[thm]{\protect\examplename}
\theoremstyle{plain}
\newtheorem{cor}[thm]{\protect\corollaryname}
\theoremstyle{plain}
\newtheorem{prop}[thm]{\protect\propositionname}
\theoremstyle{definition}
\newtheorem{defn}[thm]{\protect\definitionname}
\theoremstyle{definition}
\newtheorem{rem}[thm]{\protect\remarkname}
\numberwithin{equation}{section}
\numberwithin{figure}{section}
\renewcommand{\div}{\operatorname{div}}
\newcommand{\loc}{\operatorname{loc}}
\newcommand{\sgn}{\operatorname{sgn}}
\numberwithin{equation}{section}
\newcommand{\Bcal} {{\mathcal B}}
\newcommand{\Ocal} {{\mathcal O}}
\newcommand{\R}{\mathbb{R}}
\newcommand{\N}{\mathbb{N}}
\renewcommand{\P}{\mathbb{P}}
\newcommand{\E}{\mathbb{E}}
\newcommand{\dualdel}[3]{\sideset{_{#1^\ast}}{_{#1}}{\mathop{\langle{#2},{#3}\rangle}}}
\newcommand{\sprod}[2]{\left\langle #1 , #2 \right\rangle}
\newcommand{\norm}[1]{\left\lVert #1  \right\rVert}
\newcommand{\abs}[1]{\left\lvert #1 \right\rvert}
\newcommand{\BIGOP}[1]{\mathop{\mathchoice%
{\raise-0.22em\hbox{\huge $#1$}}%
{\raise-0.05em\hbox{\Large $#1$}}{\hbox{\large $#1$}}{#1}}}
\newcommand{\BIGboxplus}{\mathop{\mathchoice%
{\raise-0.35em\hbox{\huge $\boxplus$}}%
{\raise-0.15em\hbox{\Large $\boxplus$}}{\hbox{\large $\boxplus$}}{\boxplus}}}
\subjclass[2010]{35K67, 37A25, 37L40, 60H15.}
\providecommand{\assumptionname}{Assumption}
\providecommand{\corollaryname}{Corollary}
\providecommand{\definitionname}{Definition}
\providecommand{\examplename}{Example}
\providecommand{\lemmaname}{Lemma}
\providecommand{\propositionname}{Proposition}
\providecommand{\theoremname}{Theorem}
\providecommand{\remarkname}{Remark}
\begin{document}
\author{Florian Seib}
\author{Wilhelm Stannat}
\address{Technische Universit\"{a}t Berlin (MA 7-2)\\
Institut f\"{u}r Mathematik\\
Stra\ss{}e des 17. Juni 136\\
10623 Berlin\\
Germany}
\email{florian.seib@gmx.de, stannat@math.tu-berlin.de}
\author{Jonas M. T\"{o}lle}
\address{Aalto University\\
Department of Mathematics and Systems Analysis\\
PO Box 11100 (Otakaari~1, Espoo)\\
00076 Aalto\\
Finland}
\email{jonas.tolle@aalto.fi}

\date{\today}
\title[Stability and moment estimates for stochastic $\Phi$-Laplace equations]{Stability and moment estimates for the stochastic singular $\Phi$-Laplace
equation}
\begin{abstract}
We study stability, long-time behavior and moment estimates for stochastic
evolution equations with additive Wiener noise and with singular drift
given by a divergence type quasilinear diffusion operator which may
not necessarily exhibit a homogeneous diffusivity. Our results cover
the singular stochastic $p$-Laplace equations and, more generally, singular stochastic $\Phi$-Laplace
equations with zero Dirichlet boundary conditions. We obtain improved
moment estimates and quantitative convergence rates of the ergodic
semigroup to the unique invariant measure, classified in a systematic
way according to the degree of local degeneracy of the potential at
the origin. We obtain new concentration results for the invariant
measure and establish maximal dissipativity of the associated Kolmogorov
operator. In particular, we recover the results for the curve shortening
flow in the plane by Es-Sarhir, von Renesse and Stannat, NoDEA 16(9), 
2012, \cite{ESvRS}, and improve the results by Liu and T\"olle, ECP 16, 2011, \cite{LiuToelle}.
\end{abstract}

\keywords{Singular drift stochastic partial differential equations, stochastic
$p$-Laplace equation, stochastic non-homogeneous $\Phi$-Laplace
equation, long-time behavior of solutions, ergodic semigroup.}

\thanks{This work is licensed under the Creative Commons Attribution 4.0 International License. To view a copy of this license, visit \url{http://creativecommons.org/licenses/by/4.0/} or send a letter to Creative Commons, PO Box 1866, Mountain View, CA 94042, USA. \includegraphics[height=1em]{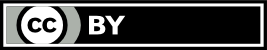}
Original work published in Journal of Differential Equations 377 (2023) 663--693, \url{https://doi.org/10.1016/j.jde.2023.09.019}.}

\maketitle

\section{Introduction}

The study of long-time behavior for Markovian dynamics associated
to stochastic evolution equations has a long history. Its significance
to mathematical physics is emphasized in the equations of Fokker and
Planck, and Kolmogorov \cite{Kolmogorov}, which are discussed in
detail in the monographs \cite{BogachevKrylovRoecknerShaposhnikov,Pavliotis}.
Extended to stochastic partial differential equations (SPDEs), the
analysis of invariant probability distributions and Kolmogorov operators
in infinite dimensions \cite{daprato2012kolmogorov,DaPrZergodicity}
provides improved insight into the averaged dynamics of PDEs perturbed
by random noise with infinitely many frequency modes \cite{DaPrZ2nd}.

We are interested in stability and moment estimates for nonlinear
SPDEs of the following type
\begin{equation}
\partial_{t}X_{t}=\div[\phi(\nabla X_{t})]+B\,\partial_{t}W_{t},\quad X_t|_{\partial\Ocal}=0,\quad t>0,\quad X_{0}=u_{0},\label{eq:main-spde}
\end{equation}
where $u_{0}\in L^{2}(\Ocal)$ for a bounded, open and convex domain $\Ocal\subset\R^{d}$ with piecewise
$C^{2}$-boundary,
$d\ge1$. We require that the solution $X_{t}$, $t>0$ vanishes on
the boundary $\partial\Ocal$, that is, we impose zero Dirichlet boundary
conditions. Here, $B$ is a linear
operator that is regularizing the spatial component and $\{W_{t}\}_{t\ge0}$ a cylindrical Wiener process on
some filtered probability space $(\Omega,\mathcal{F},\{\mathcal{F}_{t}\}_{t\ge0},\mathbb{P})$ with values in some separable Hilbert space $U$.
The nonlinearity $\phi:\R^{d}\to\R^{d}$ is monotone in the sense
that
\begin{equation}\label{eq:monotone1}
\langle\phi(x)-\phi(y),x-y\rangle\ge0,\quad x,y\in\R^{d}.
\end{equation}
Existence and uniqueness of solutions to SPDEs with nonlinear monotone
drift were first systematically studied in \cite{KrRo0,Pard}.

In this work, we shall study the singular drift case, including examples where the Jacobian of $\phi$ has a singularity in zero. We assume that
$\phi$ has at most linear growth, i.e. for some $C>0$,
\begin{equation}\label{eq:linear_growth}|\phi(x)|\le C(1+|x|).\end{equation}
We refer to \cite{GessToelle14,LiuRoe,PrevotRoeckner}
for a well-posedness theory for such equations. Examples of such $\phi$
that satisfy our main Assumption \ref{assu:C}, include, among
others, the following ones.

\subsubsection*{Stochastic singular $p$-Laplace}

For $p\in(1,2)$, let $\phi(x)=|x|^{p-2}x$ and yield the stochastic
singular $p$-Laplace equation. It has been first addressed in \cite{Liu09pLaplace}.
The results have been extended in \cite{LiuRoe,GessToelle14,GessToelle15,GessRoe2017}.
The multi-valued stochastic $1$-Laplace equation, which is also known
as the \emph{total variation flow} is not within the scope of this
work, instead, we refer to \cite{VBMR,GessRoe2017,GessToelle14,GessToelle16,GessToelle15}.
The $L^{1}$-theory of the stochastic $p$-Laplace equation has been
discussed in \cite{SapountzoglouZimmermann2019}.

\subsubsection*{Stochastic non-Newtonian fluids}

For $p\in(1,2)$, let $\phi(x)=\left(1+|x|^{2}\right)^{\frac{p-2}{2}}x$,
which yields a simplified model for stochastic singular power law
fluids without convection term. It exhibits a similar nonlinearity
as in an equation modeling non-Newtonian fluids with stochastic perturbation
in the shear-thinning case, see \cite{Breit,Sauer,BR15}.

\subsubsection*{Stochastic curve shortening flow}

In one spatial dimension $d=1$, we may formally differentiate $\phi(x)=\arctan(x)$
in order to obtain the so-called (additive noise) \emph{stochastic
curve shortening flow}
\[
\partial_{t}X_{t}=\frac{\partial_{xx}X_{t}}{1+(\partial_{x}X_{t})^{2}}+B\,\partial_{t}W_{t},\quad t>0,\quad X_{0}=u_{0},
\]
which is a formulation of the stochastic mean curvature flow in the
plane, see \cite{ESvR,GessRoe2017}. It has been generalized toward
two spatial dimensions in \cite{HRvR2017}.

\subsubsection*{Further examples}

Furthermore, we have the minimal surface flow $\phi(x)=\frac{x}{\sqrt{1+|x|^{2}}}$,
which is related to the mean curvature flow, see \cite{Funaki,BloemkerRomito},
and the logarithmic diffusion $\phi(x)=\log(1+|x|)\frac{x}{|x|}$,
as addressed e.g. in \cite{GessToelle14}.

The main problem with these kinds of nonlinear diffusivities remains
the lack of coercivity and dissipativity in the general situation.
This may be achieved by Sobolev embeddings in certain cases \cite{Liu13},
however, energy methods as e.g. stochastic variational inequalities
\cite{GessToelle15,GessRoe2017,ScarpaStefanelli2021,VBMR} seem to
deal with well-posedness issues better without loosing for instance
the Markov property of the semigroup \cite{GessToelle16}.

Other variational approaches to divergence type drift SPDEs can be
found in \cite{MarinelliScarpa2018monotone,MarinelliScarpa2018,Liu13,VBMR,LiuRoe}.
The case of gradient type Stratonovich noise is addressed in \cite{CiotirToelle,Toelle,BBHT,BBT16}.

\subsection*{Long-time behavior}

Even in the case of analytically weak variational solutions, we may
define a \emph{Feller semigroup} $\{P_{t}\}_{t\ge0}$ associated to
the dynamics of \eqref{eq:main-spde} via extension of
\[
P_{t}f(u):=\E[f(X_{t}^{u})],\quad f\in\operatorname{Lip}_{b}(L^{2}(\Ocal)),\quad t\ge0,
\]
where $\{X_{t}^{u}\}_{t\ge0}$ denotes a solution to \eqref{eq:main-spde}
with $X_{0}=u$, see \cite{DaPrZergodicity}. We call a Borel probability measure $\mu$
on $L^{2}(\Ocal)$ \emph{invariant} with respect to $\{P_{t}\}_{t\ge0}$
if
\[
\int_{L^{2}(\Ocal)}P_{t}f\,d\mu=\int_{L^{2}(\Ocal)}f\,d\mu
\]
for every $t\ge0$ and every $f\in C_{b}(L^{2}(\Ocal))$. We refer
to \cite{Stannat2011} for an overview article on this topic. Uniqueness
of invariant measures for singular stochastic equations in low spatial
dimensions was first proved with methods from \cite{KPS} for the
stochastic curve shortening flow in \cite{ESvR}. These results were
then extended to the stochastic singular $p$-Laplace equation in
\cite{LiuToelle} and to more general equations in \cite{GessToelle14,Neuss2021}.
In particular, for additive noise, the decay behavior of deterministic
solutions (that is, for $B\equiv0$) plays an important role, see
\cite{Porzio2009,Porzio2011,DiBenedetto:1993kl} for results of this
type. We would like to point out that our result is novel even for the deterministic case. Second order estimates for the
deterministic parabolic $p$-Laplace system have been obtained in \cite{CM19,CM20}.
Ergodicity of the semigroup and uniqueness of invariant measures
for local and nonlocal singular $p$-Laplace equations in all spatial
dimensions was proved in \cite{GessToelle16}.

We are also interested in improved moment estimates for singular monotone
drift SPDEs which yield more explicit convergence rates in the spirit
of \cite{ESS2010,ESvRS,LiuToelle}. To this end, we may assume that
the singularity is of a certain polynomial type, more precisely, we shall assume that $\phi$ has a radially symmetric potential, that is, $\phi:\R^{d}\to\R^{d}$ is of the form
\begin{equation}
\phi(z)=\begin{cases}
\psi(|z|)\frac{z}{|z|}, & \text{if}\quad z\not=0,\\
0, & \text{if}\quad z=0,
\end{cases}\label{eq:phi_form}
\end{equation}
where $\psi:\R\to\R$ satisfies the following
\begin{assumption}
\label{assu:C} Suppose that $\psi\in W_{\loc}^{1,1}(\R)$ such that
there exists $C>0$ with
\begin{enumerate}
\item[(C1)] $\psi'(r)>0$ for every $r>0$, 
\item[(C2)] $\psi(r)=-\psi(-r)$ for every $r\in\R$,
\item[(C3)] $\psi(r)\le C(1+|r|)$ for every $r\ge0$,
\item[(C4)]  there exist $a\ge0$, $b>0$, $s\in(0,2]$ with 
\[
(a+b|r|^{s})|\psi'(r)|\ge1
\]
 for every $r\ge0$, 
\item[(C5)] there exist $c>0$ and $K\ge0$ such that
\[
\psi(r)\cdot r\ge c|r|-K,\quad r\ge0.
\]
\item[(C6)] $$0<s\le\frac{4}{d}\wedge 2,$$
\end{enumerate}
\end{assumption}

We are ready to formulate our main result, see Theorems \ref{support} and \ref{thm:semigroup}, which generalizes the results of \cite{ESS2010,ESvRS,LiuToelle,GessToelle14}, which assume the more restrictive assumption 
$s<\frac{4}{d+2}\wedge 1$ compared to (C6).

\begin{thm}\label{thm:main_in_intro}
Assume (C1)--(C6) and that\footnote{We denote the \emph{space of Hilbert-Schmidt
operators} from the separable Hilbert space $U$ to a separable Hilbert space $S$ by $L_{2}(U,S)$.} $B\in L_2(U,H_0^1(\Ocal))$. Set
\[s^{\ast}=(2-s)\vee\frac{4-s}{2+s}.\]
\begin{enumerate}
\item Then there exists a unique invariant Borel probability
measure $\mu$ of $\{P_t\}_{t\ge 0}$ that is concentrated on the subset $W^{2,\alpha}_0(\Ocal)\cap H^1_0(\Ocal)\subset L^2(\Ocal)$ for any $1<\alpha\leq d\frac{2-s}{d-s}$ if $d\ge 2$ and $\alpha=1$ if $d=1$ with
\[
\int_{L^2(\Ocal)}\|u\|_{W_{0}^{2,\alpha}(\Ocal)}^{s^{\ast}}\,\mu(du)+\int_{L^{2}(\Ocal)}\lVert u\rVert_{H_0^1(\Ocal)}^{s^{\ast}}\,\mu(du)<+\infty.
\]
\item
Let $d_{0}:=1\vee\frac{d}{2}$ and $$0<\beta\leq\beta^{\star}:=1\wedge\frac{8-2s}{s(2+s)d_{0}}\in\left[\frac{1}{2d_{0}},1\right].$$
Then there exist non-negative constants $C_{1}$, $C_{2}$
and $C_{3}$ such that
\[
\limsup_{t\to+\infty}\left[t^{\frac{s^{\ast}}{s}}\frac{\lvert P_{t}f(u)-P_{t}f(v)\rvert}{\lVert u-v\rVert_{L^2(\Ocal)}^{\beta}}\right]\leq C_{1}|f|_{\beta}\left(C_{2}+C_{3}\int_{L^2(\Ocal)}\lVert u\rVert_{H_0^1(\Ocal)}^{s^{\ast}}\,\mu(du)\right)
\]
for every $u,v\in H_0^1(\Ocal)$ and for every $f:L^2(\Ocal)\to\R$ that is bounded and $\beta$-H\"{o}lder-continuous\footnote{$f:L^2(\Ocal)\to\R$ is called \emph{$\beta$-H\"{o}lder-continuous} if
$\sup_{u\not=v}\frac{\lvert f(u)-f(v)\rvert}{\lVert u-v\rVert_{L^2(\Ocal)}^{\beta}}=:\lvert f\rvert_{\beta}<+\infty$.}.
\end{enumerate}
\end{thm}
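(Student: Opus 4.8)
We sketch the argument, which splits into the concentration statement (Theorem~\ref{support}) and the quantitative ergodicity statement (Theorem~\ref{thm:semigroup}).

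\emph{Concentration of the invariant measure (assertion (1)).} The engine is a second-order a priori estimate. Applying It\^o's formula to $u\mapsto\lVert\nabla u\rVert_{L^2(\Ocal)}^2$ along a Galerkin/Yosida approximation of the solution and using $-\Delta X_t$ as a test function, the drift term, after an integration by parts, produces $-2\int_\Ocal D\phi(\nabla X_t)\nabla^2X_t:\nabla^2X_t\,dx$ up to a boundary term which has the favourable sign because $\Ocal$ is convex with piecewise $C^2$-boundary and $X_t$ vanishes on $\partial\Ocal$ (a Grisvard-type inequality). The lower ellipticity bound $D\phi(z)\xi\cdot\xi\ge(a+b|z|^s)^{-1}|\xi|^2$ furnished by (C4) then yields, in expectation,
\[
\frac{d}{dt}\E\lVert\nabla X_t\rVert_{L^2(\Ocal)}^2+c\,\E\int_\Ocal\frac{|\nabla^2X_t|^2}{a+b|\nabla X_t|^s}\,dx\le\lVert B\rVert_{L_2(U,H_0^1(\Ocal))}^2 .
\]
Separately, testing the equation with $X_t$ and using the coercivity (C5) gives the time-averaged bound $\frac1T\int_0^T\E\lVert\nabla X_t\rVert_{L^1(\Ocal)}\,dt\le C$. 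A H\"older--Gagliardo--Nirenberg interpolation, whose exponents close exactly under (C6) — which in particular forces the embedding $W_0^{2,\alpha}(\Ocal)\hookrightarrow H_0^1(\Ocal)$ for $1<\alpha\le d\frac{2-s}{d-s}$ (and $\alpha=1$ if $d=1$) — yields, for every $\eta>0$,
\[
\lVert u\rVert_{W_0^{2,\alpha}(\Ocal)}^{s^\ast}\le\eta\int_\Ocal\frac{|\nabla^2u|^2}{a+b|\nabla u|^s}\,dx+C_\eta\bigl(1+\lVert\nabla u\rVert_{L^1(\Ocal)}\bigr).
\]
Combining these three displays, integrating in $t$, and dividing by $T$ shows that the occupation measures $\nu_T:=\frac1T\int_0^TP_t^\ast\delta_{u_0}\,dt$ ($u_0\in H_0^1(\Ocal)$) satisfy $\int_{L^2(\Ocal)}\bigl(\lVert u\rVert_{W_0^{2,\alpha}(\Ocal)}^{s^\ast}+\lVert u\rVert_{H_0^1(\Ocal)}^{s^\ast}\bigr)\,\nu_T(du)\le C$ uniformly for $T\ge1$. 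Since $W_0^{2,\alpha}(\Ocal)\cap H_0^1(\Ocal)$ embeds compactly into $L^2(\Ocal)$, the family $\{\nu_T\}$ is tight, Krylov--Bogoliubov produces an invariant $\mu$, and lower semicontinuity of the norms transfers the moment bound to $\mu$. Uniqueness follows from monotonicity: for two solutions $X^u,X^v$ driven by the same noise, $Y_t:=X_t^u-X_t^v$ solves $\partial_tY_t=\div[\phi(\nabla X_t^u)-\phi(\nabla X_t^v)]$ with no noise term, so $\lVert Y_t\rVert_{L^2(\Ocal)}$ is nonincreasing, and feeding the moment bounds into the quantitative strict monotonicity from (C4) forces $\E\lVert Y_t\rVert_{L^2(\Ocal)}^2\to0$, whence at most one invariant measure (an asymptotic-coupling argument in the spirit of Kuksin--Piatnitski--Shirikyan and Es-Sarhir--von Renesse).

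\emph{Quantitative convergence rate (assertion (2)).} Boundedness and $\beta$-H\"older continuity of $f$ give, via Jensen, $|P_tf(u)-P_tf(v)|\le|f|_\beta\,\E\lVert Y_t\rVert_{L^2(\Ocal)}^\beta\le|f|_\beta\bigl(\E\lVert Y_t\rVert_{L^2(\Ocal)}^2\bigr)^{\beta/2}$, so it suffices to quantify the decay of $y(t):=\E\lVert Y_t\rVert_{L^2(\Ocal)}^2$. Starting from
\[
\frac12\frac{d}{dt}\lVert Y_t\rVert_{L^2(\Ocal)}^2=-\int_\Ocal\langle\phi(\nabla X_t^u)-\phi(\nabla X_t^v),\nabla Y_t\rangle\,dx\le-c\int_\Ocal\frac{|\nabla Y_t|^2}{a+b|\nabla X_t^u|^s+b|\nabla X_t^v|^s}\,dx,
\]
where the last step uses (C4) together with $|(1-\tau)\nabla X_t^v+\tau\nabla X_t^u|\le|\nabla X_t^u|\vee|\nabla X_t^v|$ for $\tau\in[0,1]$, one splits the $L^2$-norm of $\nabla Y_t$ by H\"older, estimates the resulting higher $L^q$-norm of $\nabla Y_t$ by interpolation between $\lVert\nabla Y_t\rVert_{L^2(\Ocal)}$ (comparable to $\lVert Y_t\rVert_{L^2(\Ocal)}$ by Poincar\'e) and the $W_0^{2,\alpha}(\Ocal)$-norm of $Y_t$ (for which a second-order estimate analogous to the one above is available), and absorbs the $H_0^1(\Ocal)$-norms of $X_t^u,X_t^v$ through the moment bounds of part (1). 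This produces, after taking expectations, a differential inequality for $y$ which, integrated and combined with the monotone bound $y(t)\le\lVert u-v\rVert_{L^2(\Ocal)}^2$, yields
\[
y(t)\le C(R_t)\,\lVert u-v\rVert_{L^2(\Ocal)}^2\,t^{-2s^\ast/(s\beta)},\qquad \limsup_{t\to\infty}C(R_t)\le C_2+C_3\int_{L^2(\Ocal)}\lVert u\rVert_{H_0^1(\Ocal)}^{s^\ast}\,\mu(du),
\]
where the exponent $2s^\ast/(s\beta)$ and the admissible range $0<\beta\le\beta^\star$ reflect the balancing of the same Sobolev exponents. Raising to the power $\beta/2$ gives the asserted bound with appropriate constants $C_1,C_2,C_3\ge0$.

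\emph{Where the difficulty lies.} The decisive point is the second-order estimate and, within it, the sharp interpolation inequality admitting $s$ up to $\frac4d\wedge2$: earlier works estimate $\lVert X_t\rVert_{H^2(\Ocal)}$, which is affordable only under $s<\frac{4}{d+2}\wedge1$, whereas here one must work in $W_0^{2,\alpha}(\Ocal)$ with $\alpha<2$ calibrated to $\alpha\le d\frac{2-s}{d-s}$ and exploit the weighted quantity $\int_\Ocal|\nabla^2X_t|^2/(a+b|\nabla X_t|^s)\,dx$ rather than $\int_\Ocal|\nabla^2X_t|^2\,dx$. Secondary but delicate are the rigorous justification of It\^o's formula and of the sign of the boundary term for merely variational solutions, the second-order estimate for the difference $Y_t$ in the singular regime, and the accounting that pins the rate constant to $\int_{L^2(\Ocal)}\lVert u\rVert_{H_0^1(\Ocal)}^{s^\ast}\,\mu(du)$.
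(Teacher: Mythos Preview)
For part~(1) your outline is essentially the paper's argument: finite-dimensional approximation, It\^o's formula tested against $-\Delta$ (realised via the Yosida norms $\|\cdot\|_n$ to sidestep regularity issues), convexity of $\Ocal$ for the sign of the boundary term, coercivity~(C5) for a first-order bound, and an interpolation inequality converting the weighted second-order integral into a $W_0^{2,\alpha}$-moment. One minor point: for $d\ge2$ the paper's interpolation (Lemma~\ref{lem_moments}) is self-closing---the bad factor $\int_\Ocal\Lambda_{\min}(\nabla u)^{-\alpha/(2-\alpha)}\,dx$ is reabsorbed via the embedding $W_0^{2,\alpha}\hookrightarrow W_0^{1,s\alpha/(2-\alpha)}$---so no separate $\|\nabla u\|_{L^1}$ input is needed there (it appears only in the $d=1$ case, Lemma~\ref{lem:a-priori}).

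For part~(2), however, your route has a genuine gap. You assert that $\|\nabla Y_t\|_{L^2}$ is ``comparable to $\|Y_t\|_{L^2}$ by Poincar\'e'' and that a $W_0^{2,\alpha}$-estimate for the difference $Y_t=X_t^u-X_t^v$ is ``available''. Neither holds: Poincar\'e only bounds $\|Y_t\|_{L^2}$ by $\|\nabla Y_t\|_{L^2}$, not conversely; and testing the difference equation with $-\Delta Y_t$ produces cross-terms $\int_\Ocal\langle D\phi(\nabla X^u)\nabla\partial_i X^u-D\phi(\nabla X^v)\nabla\partial_i X^v,\nabla\partial_i Y_t\rangle\,dx$ with no sign, so no second-order dissipation for $Y_t$ emerges. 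The paper never uses second-order information on $Y_t$. Instead (Lemmas~\ref{lem:decay} and~\ref{lemma:stability}) it combines the critical Sobolev embedding $W_0^{1,2d/(d+2)}(\Ocal)\hookrightarrow L^2(\Ocal)$ with a weighted H\"older split to obtain, \emph{pathwise},
\[
\langle A(X^u_t)-A(X^v_t),Y_t\rangle\;\ge\;C\,\|Y_t\|_H^2\int_0^1\Bigl(\int_\Ocal\Lambda_{\min}\bigl(\gamma(\lambda,X^u_t,X^v_t)\bigr)^{-d_0}\,dx\Bigr)^{-1/d_0}d\lambda,
\]
the inner integral being controlled by $a|\Ocal|+b\|X^u_t\|_V^{sd_0}+b\|X^v_t\|_V^{sd_0}$ through~(C4). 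Gronwall then gives $\|Y_t\|_H^2\le\|Y_0\|_H^2\exp\bigl(-\int_0^t G_\tau\,d\tau\bigr)$, and the polynomial rate $t^{-s^\ast/s}$ is extracted---still pathwise---from the elementary inequality $e^x\ge(ex/l)^l$ together with Jensen (Proposition~\ref{thm:decay}); only afterwards is the expectation taken and ergodicity invoked to identify $\frac1t\int_0^t\E\|X_\tau\|_V^{s^\ast}\,d\tau\to\int_H\|u\|_V^{s^\ast}\,\mu(du)$ (Theorem~\ref{thm:semigroup}). Passing to $y(t)=\E\|Y_t\|_H^2$ at the outset, as you do, would require a lower bound on $\E[\|Y_t\|_H^2 G_t]$ in terms of $y(t)$, which does not factor.
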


Finally, we shall address the maximal dissipativity of the associated
Kolmogorov operator with infinitely many variables, see \cite{Stannat2011NSC,BarbuDaPrato2010,daprato2012kolmogorov,ESS2009,ESvRS,Stannat2011,BogachevDaPratoRoeckner,MarinelliScarpa2020}
for related works.

We note that the degenerate $p$-Laplace, i.e. the case $p>2$, may be treated more easily
with the Krylov-Bogoliubov method \cite{DaPrZergodicity}, as is done
in the variational setup in \cite{BarbuDaPrato2006}. Previous approaches
include dimension-free Harnack inequalities \cite{Liu09harnack,HongLiLiu,LiuWang}
and the more direct approach via irreducibility \cite{Zhang2019}.
In the case of multiplicative noise, existence of invariant measures
for monotone drift equations has been proved in \cite{ESST}.

\subsection*{Structure of the paper}

We start with collecting some preliminary results in Section \ref{sec:Main-assumptions}
including a known well-posedness result for \eqref{eq:main-spde} and results on invariant measures of Markovian Feller semigroups. We conclude Section \ref{sec:Main-assumptions} with a discussion of our assumptions and a collection of our main examples for $\phi$. In
Section \ref{sec:Stability}, as one the main results, we shall prove
a stability result for the semigroup, see Proposition \ref{thm:decay}.
In Section \ref{sec:Moment-estimates}, we shall first record a result for the case of one
spatial dimension, see Lemma \ref{lem:a-priori}. Next, we prove the improved moment estimates and a concentration
result for the unique invariant measure, see Theorem \ref{support}, which implies our main result Theorem \ref{thm:main_in_intro} (i).
In Section \ref{sec:decayandexamples}, the main result Theorem \ref{thm:main_in_intro} (ii) is proved and applied to the main
examples, see Theorem \ref{thm:semigroup} and the subsequent examples. In Section \ref{sec:Maximal-dissipativity}, under an additional assumption, we prove the
maximal dissipativity of the Kolmogorov operator associated to the
semigroup, see Theorem \ref{thm:max}.

\section{\label{sec:Main-assumptions}Preliminary results}

\subsection{Existence and uniqueness of solutions}

Let us recall the following conditions from \cite{GessToelle14},
simplified with regard to the time-dependence of the drift coefficients,
which is not needed here. The results are adapted to the Gelfand triple\footnote{Let $H$ be a separable Hilbert space and let $V$ be another Hilbert
space (with topological dual $V^{\ast}$) which is embedded densely
and continuously into $H$. Then, $H$ is identified with its dual
via the Riesz isometry. The triple $V\subset H\subset V^{\ast}$ is
called \emph{Gelfand triple}.}
\begin{equation}\label{eq:Gelfand}
V:=H_{0}^{1}(\Ocal)\subset H:=L^{2}(\Ocal)\subset V^{\ast}=H^{-1}(\Ocal)
\end{equation}
that also encodes the zero Dirichlet boundary condition. Here, we use the shorthand notation $H_0^1(\Ocal)=W^{1,2}_0(\Ocal)$ and the notation $H^{-1}(\Ocal)$ for
the topological dual space of $H_0^1(\Ocal)$.

Suppose that $A:V\to V^{\ast}$ satisfies the
following conditions: There exists a constant $C>0$ such that
\begin{enumerate}
\item[(A1)]  The map $u\mapsto A(u)$ is maximal monotone\footnote{That is, the graph of $A$ is maximal in the class of monotone graphs,
ordered by set-inclusion, see \cite{barbubook} for details.}.
\item[(A2)]  For all $u\in V$:
\[
\|A(u)\|_{V^{\ast}}\le C\|u\|_{V}.
\]
\item[(A3)]  For all $u\in V$, and for all $n\in\N$:
\[
2\dualdel{V}{A(u)}{T_{n}(u)}\ge-C\|u\|_{V}^{2},
\]
such that $C$ is independent of $n$, where $T_{n}:=n\left(\operatorname{Id}-(\operatorname{Id}-\frac{\Delta}{n})^{-1}\right)$
denotes the Yosida-approximation of the negative Dirichlet Laplace
operator $-\Delta=-\sum_{i=1}^{d}\partial_{x_{i}}\partial_{x_{i}}$.
\end{enumerate}
Let $U$ be a separable Hilbert space. For a separable Hilbert space $S$, denote the \emph{space of Hilbert-Schmidt
operators} from $U$ to $S$ by $L_{2}(U,S)$. Denote by $\{W_{t}\}_{t\ge0}$ a cylindrical Wiener process\footnote{See \cite{DaPrZ2nd} for the notion of a cylindrical Wiener process
in Hilbert space.} in $U$
for a stochastic basis $(\Omega,\mathcal{F},\{\mathcal{F}_{t}\}_{t\ge0},\mathbb{P})$ that satisfies the standard assumptions.

\begin{assumption}\label{assu:B}Assume that
\begin{enumerate}
\item[(B1)] $B\in L_2(U,V)$.
\end{enumerate}
\end{assumption}

\begin{defn}
\label{def:sln-GT} We say that a continuous $\{\mathcal{F}_{t}\}_{t\ge0}$-adapted
stochastic process $X:[0,T]\times\Omega\to H$ is a \emph{solution
}to
\begin{equation}
dX_{t}+A(X_{t})\,dt= B\,dW_{t},\quad X_{0}=u_{0},\label{eq:Ito-multivalued}
\end{equation}
if $X\in L^{2}(\Omega;C([0,T];H))\cap L^{2}([0,T]\times\Omega;V)$
and solves the following integral equation in $V^{\ast}$
\[
X_{t}=u_{0}-\int_{0}^{t}A(X_\tau)\,d\tau+B\,W_{t},
\]
$\mathbb{P}$-a.s. for all $t\in[0,T]$.
\end{defn}

\begin{thm}
\label{thm:sln-ex-GT} Suppose that conditions (A1)--(A3), (B1)
hold. Let $u_{0}\in L^{2}(\Omega,\mathcal{F}_{0},\mathbb{P};V)$.
Then there exists a unique solution in the sense of the previous definition
to the equation
\begin{equation}
dX_{t}+A(X_{t})\,dt=B\,dW_{t},\quad X_{0}=u_{0},\label{eq:abstract-Ito-eq-solution}
\end{equation}
that satisfies
\[
\E\left[\sup_{t\in[0,T]}\|X_{t}\|_{V}^{2}\right]<+\infty.
\]
\end{thm}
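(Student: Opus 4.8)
The plan is to place \eqref{eq:abstract-Ito-eq-solution} in the variational framework, but since (A2) supplies only linear growth and no coercivity, I would first regularise the drift and then recover the sharp $V$-bound from (A3). Let $L\colon V\to V^{\ast}$ denote the Riesz isomorphism, $\langle Lu,v\rangle=\langle u,v\rangle_{V}$. Taking $u=0$ in (A2) forces $A(0)=0$, so monotonicity (A1) gives $\langle A(u),u\rangle\ge0$ for all $u\in V$. For $\eps\in(0,1]$ set $A_{\eps}:=A+\eps L$. Then $A_{\eps}$ is everywhere defined, monotone, of linear growth $\|A_{\eps}(u)\|_{V^{\ast}}\le(C+1)\|u\|_{V}$, and coercive, $\langle A_{\eps}(u),u\rangle\ge\eps\|u\|_{V}^{2}$; being bounded and maximal monotone on a Hilbert space it is demicontinuous, hence hemicontinuous. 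Thus $A_{\eps}$ satisfies the hypotheses of the classical Krylov--Rozovskii variational existence and uniqueness theorem with exponent $p=2$, yielding a unique solution $X^{\eps}\in L^{2}(\Omega;C([0,T];H))\cap L^{2}([0,T]\times\Omega;V)$ of $dX^{\eps}+A_{\eps}(X^{\eps})\,dt=B\,dW_{t}$, $X_{0}^{\eps}=u_{0}$ (here one uses $u_{0}\in L^{2}(\Omega;V)\subset L^{2}(\Omega;H)$ and $B\in L_{2}(U,V)\subset L_{2}(U,H)$). An alternative route is to substitute $Y^{\eps}_{t}=X^{\eps}_{t}-BW_{t}$ and solve the resulting random evolution equation $\dot Y^{\eps}=-A_{\eps}(Y^{\eps}+BW_{\cdot})$ pathwise; I would nonetheless follow the direct variational scheme.

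Next I would derive a priori bounds uniform in $\eps$. The It\^{o} formula for $\|X^{\eps}_{t}\|_{H}^{2}$ alone does not control $X^{\eps}$ in $L^{2}([0,T]\times\Omega;V)$ uniformly in $\eps$ (the only available coercivity constant is $\eps$), so the $V$-estimate has to come from (A3). Let $T_{n}$ be the Yosida approximation of $-\Delta$ from (A3) and apply the It\^{o} formula in the Gelfand triple to $t\mapsto\langle T_{n}X^{\eps}_{t},X^{\eps}_{t}\rangle$, a quantity increasing to $\|X^{\eps}_{t}\|_{V}^{2}$ as $n\to\infty$. The drift contributes $-2\langle A(X^{\eps}_{t}),T_{n}X^{\eps}_{t}\rangle\le C\|X^{\eps}_{t}\|_{V}^{2}$ by (A3), together with $-2\eps\langle LX^{\eps}_{t},T_{n}X^{\eps}_{t}\rangle\le0$ since $L$ and $T_{n}$ are commuting nonnegative self-adjoint operators; the quadratic-variation term equals $\sum_{k}\langle T_{n}Be_{k},Be_{k}\rangle\le\|B\|_{L_{2}(U,V)}^{2}$, which is finite precisely because $B\in L_{2}(U,V)$. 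Burkholder--Davis--Gundy applied to the stochastic integral and Gr\"{o}nwall's lemma then give $\E\sup_{t\le T}\langle T_{n}X^{\eps}_{t},X^{\eps}_{t}\rangle\le C(T,\|B\|_{L_{2}(U,V)},\E\|u_{0}\|_{V}^{2})$ uniformly in $n$ and $\eps$; letting $n\to\infty$ by monotone convergence yields
\[
\sup_{\eps\in(0,1]}\E\Bigl[\sup_{t\le T}\|X^{\eps}_{t}\|_{V}^{2}\Bigr]<+\infty,
\]
and hence, by (A2), $\sup_{\eps}\E\int_{0}^{T}\|A(X^{\eps}_{s})\|_{V^{\ast}}^{2}\,ds<+\infty$ and $\eps LX^{\eps}\to0$ in $L^{2}([0,T]\times\Omega;V^{\ast})$.

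With these bounds the passage $\eps\to0$ would be the standard monotonicity (Minty) argument. Along a subsequence, $X^{\eps}\rightharpoonup X$ in $L^{2}([0,T]\times\Omega;V)$ and $A(X^{\eps})\rightharpoonup Y$ in $L^{2}([0,T]\times\Omega;V^{\ast})$, so the limit $X$ solves $dX+Y\,dt=B\,dW_{t}$, $X_{0}=u_{0}$. To identify $Y=A(X)$, fix an adapted $v\in L^{2}([0,T]\times\Omega;V)$; monotonicity of $A_{\eps}$ gives $\E\int_{0}^{T}\langle A_{\eps}(X^{\eps}_{s})-A_{\eps}(v_{s}),X^{\eps}_{s}-v_{s}\rangle\,ds\ge0$. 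Rewriting $\E\int_{0}^{T}\langle A_{\eps}(X^{\eps}_{s}),X^{\eps}_{s}\rangle\,ds$ through the It\^{o} energy identity for $\|X^{\eps}_{t}\|_{H}^{2}$, passing to the limit using the weak lower semicontinuity of $u\mapsto\E\|u\|_{H}^{2}$, the energy identity for $\|X_{t}\|_{H}^{2}$, and $\eps LX^{\eps}\to0$, one arrives at $\E\int_{0}^{T}\langle Y_{s}-A(v_{s}),X_{s}-v_{s}\rangle\,ds\ge0$; choosing $v=X-\lambda w$ with $w$ adapted and letting $\lambda\downarrow0$ (hemicontinuity) yields $Y=A(X)$. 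The continuity $X\in L^{2}(\Omega;C([0,T];H))$ and the bound $\E\sup_{t\le T}\|X_{t}\|_{V}^{2}<+\infty$ are then inherited from the uniform estimates and lower semicontinuity. For uniqueness, if $X^{1},X^{2}$ are two solutions with the same initial datum the noise cancels, $\tfrac{d}{dt}(X^{1}_{t}-X^{2}_{t})=-(A(X^{1}_{t})-A(X^{2}_{t}))$ in $V^{\ast}$ with $X^{1}_{0}-X^{2}_{0}=0$, so the Gelfand-triple chain rule and (A1) give $\tfrac{d}{dt}\|X^{1}_{t}-X^{2}_{t}\|_{H}^{2}=-2\langle A(X^{1}_{t})-A(X^{2}_{t}),X^{1}_{t}-X^{2}_{t}\rangle\le0$, whence $X^{1}\equiv X^{2}$.

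The step I expect to be the main obstacle is the uniform-in-$\eps$ a priori estimate at the $V$-level: one must make rigorous the formal test of the equation with $-\Delta X^{\eps}$ through the Yosida scheme, verify that the commutator and regularising terms carry the correct sign, and check that the trace term is controlled by $\|B\|_{L_{2}(U,V)}$ rather than by the weaker $\|B\|_{L_{2}(U,H)}$. This is precisely where (A3) and the stronger assumption (B1), $B\in L_{2}(U,V)$, enter, and it is what separates the present statement from the classical variational existence theorem.
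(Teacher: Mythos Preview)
The paper does not prove this statement itself: its proof is a one-line citation of \cite[Theorem~4.4]{GessToelle14}. Your outline is therefore a self-contained reconstruction rather than a reproduction of anything in the present paper, and it is essentially correct. The scheme---perturb $A$ by $\eps L$ to gain coercivity, solve the regularised equation by Krylov--Rozovski\u{\i}, extract $\eps$-uniform $V$-bounds by testing with $T_n$ (which is exactly what (A3) is engineered for), and identify the limit via Minty---is standard, and the $\langle T_n\cdot,\cdot\rangle_H$-device you invoke is the same one the paper employs later in the proof of Theorem~\ref{support}. The alternative transformation $Y=X-BW$ you mention in passing is in fact closer to how the cited reference actually proceeds, reducing the problem to a pathwise deterministic evolution inclusion with random data; your direct variational route is equally valid here because $A$ is single-valued and of linear growth.

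Two points deserve more care. First, the Gr\"{o}nwall step: from It\^{o} you obtain
\[
\E\langle T_nX^\eps_t,X^\eps_t\rangle_H\le\E\|u_0\|_V^2+C\,\E\!\int_0^t\|X^\eps_s\|_V^2\,ds+t\|B\|_{L_2(U,V)}^2,
\]
but since $\langle T_n u,u\rangle_H\le\|u\|_V^2$ (not $\ge$), this does not close at fixed $n$. You must first let $n\to\infty$ by monotone convergence (legitimate because $X^\eps\in L^2([0,T]\times\Omega;V)$ is known a priori from the regularised problem), obtain a closed integral inequality for $t\mapsto\E\|X^\eps_t\|_V^2$, apply Gr\"{o}nwall, and only then return with BDG for the pathwise supremum. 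Second, the phrase ``$L$ and $T_n$ are commuting nonnegative self-adjoint operators'' is a little loose, since $L:V\to V^\ast$ and $T_n:H\to H$ act on different spaces; the clean justification is simply $\dualdel{V}{Lu}{T_n u}=\langle u,T_n u\rangle_V\ge0$, which follows from the spectral calculus of the Dirichlet Laplacian. With these adjustments your argument goes through.
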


\begin{proof}
See \cite[Theorem 4.4]{GessToelle14}. 
\end{proof}
\begin{defn}
\label{def:limit-solution}An $\{\mathcal{F}_{t}\}_{t\ge0}$-adapted
stochastic process $X\in L^{2}(\Omega;C([0,T];H))$ is called a \emph{generalized
solution}\footnote{Also referred to as \emph{limit solution}.}\emph{
}to \eqref{eq:Ito-multivalued} with starting point $u_{0}\in H$
if for all approximations $u_{0}^{m}\in V$, $m\in\mathbb{N}$ with
$\|u_{0}^{m}-u_{0}\|_{H}\to0$ as $m\to+\infty$ and all $B_{m}\in L_{2}(U,V)$
such that $B_{m}\to B\in L_2(U,H)$ strongly
in $L_{2}(U,H)$ as $m\to+\infty$, we
have that
\[
X^{m}\to X\quad\text{strongly in\ensuremath{\quad}\ensuremath{L^{2}(\Omega;C([0,T];H))}\ensuremath{\quad}as\ensuremath{\quad}\ensuremath{m\to+\infty}.}
\]
\end{defn}

\begin{thm}
\label{thm:ex-limit-sln} Suppose that conditions (A1)--(A3) hold and that $B\in L_2(U,H)$.
Let $u_{0}\in L^{2}(\Omega,\mathcal{F}_{0},\mathbb{P};H)$.
Then there exists a unique generalized solution in the sense of the
previous definition to the equation
\begin{equation}
dX_{t}+A(X_{t})\,dt=B\,dW_{t},\quad X_{0}=u_{0}.\label{eq:abstract-Ito-eq-limit-solution}
\end{equation}
\end{thm}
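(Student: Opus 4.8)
The plan is to obtain the generalized solution as a limit of the strong solutions provided by Theorem~\ref{thm:sln-ex-GT}, controlled by a stability estimate in $H$ that is uniform along approximating sequences. Given $u_{0}\in L^{2}(\Omega,\mathcal{F}_{0},\mathbb{P};H)$ and $B\in L_{2}(U,H)$, I would fix any approximating data $u_{0}^{m}\in L^{2}(\Omega,\mathcal{F}_{0},\mathbb{P};V)$ with $u_{0}^{m}\to u_{0}$ in $L^{2}(\Omega;H)$ and any $B_{m}\in L_{2}(U,V)$ with $B_{m}\to B$ in $L_{2}(U,H)$; both exist since $V$ is dense in $H$, so finite-rank $V$-valued operators are dense in $L_{2}(U,H)$. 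Theorem~\ref{thm:sln-ex-GT} then yields unique solutions $X^{m}$ with $\E[\sup_{t\in[0,T]}\norm{X_{t}^{m}}_{V}^{2}]<+\infty$, in particular $X^{m}\in L^{2}(\Omega;C([0,T];H))$.

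The heart of the argument is the stability estimate: for any two solutions $X^{m},X^{n}$ with data $(u_{0}^{m},B_{m})$ and $(u_{0}^{n},B_{n})$,
\[
\E\Bigl[\sup_{t\in[0,T]}\norm{X_{t}^{m}-X_{t}^{n}}_{H}^{2}\Bigr]\le C_{T}\Bigl(\norm{u_{0}^{m}-u_{0}^{n}}_{L^{2}(\Omega;H)}^{2}+\norm{B_{m}-B_{n}}_{L_{2}(U,H)}^{2}\Bigr).
\]
To prove it I would set $Z:=X^{m}-X^{n}$, note that $Z\in L^{2}([0,T]\times\Omega;V)$ and, by (A2), $A(X^{m})-A(X^{n})\in L^{2}([0,T]\times\Omega;V^\ast)$, so the Itô formula for $\norm{Z_{t}}_{H}^{2}$ on the Gelfand triple \eqref{eq:Gelfand} applies; the drift contribution $-2\,{}_{V^\ast}\langle A(X_{s}^{m})-A(X_{s}^{n}),Z_{s}\rangle_{V}$ is nonpositive by the monotonicity (A1), leaving only the Itô correction $t\,\norm{B_{m}-B_{n}}_{L_{2}(U,H)}^{2}$ and the stochastic integral $M_{t}=2\int_{0}^{t}\sprod{Z_{s}}{(B_{m}-B_{n})\,dW_{s}}$. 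Taking the supremum over $[0,T]$, using $\langle M\rangle_{T}\le 4T\,\norm{B_{m}-B_{n}}_{L_{2}(U,H)}^{2}\sup_{[0,T]}\norm{Z_{t}}_{H}^{2}$ together with Burkholder--Davis--Gundy, and absorbing a fraction of $\E[\sup_{[0,T]}\norm{Z_{t}}_{H}^{2}]$ (finite by Theorem~\ref{thm:sln-ex-GT}) into the left-hand side gives the claimed bound.

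With this estimate the rest is formal. Applied along the chosen approximating sequence it shows that $(X^{m})_{m}$ is Cauchy in the Banach space $L^{2}(\Omega;C([0,T];H))$; its limit $X$ is a continuous, $\{\mathcal{F}_{t}\}_{t\ge0}$-adapted process in that space. Applying the same estimate between $X^{m}$ and the solutions $\tilde{X}^{m}$ coming from any other admissible approximating data shows $\tilde{X}^{m}\to X$ as well, so the limit is independent of the approximation and $X$ is a generalized solution in the sense of Definition~\ref{def:limit-solution}. Uniqueness is immediate: if $X$ and $Y$ are two generalized solutions with the same starting point, fix one approximating sequence; by definition $X^{m}\to X$ and $X^{m}\to Y$ in $L^{2}(\Omega;C([0,T];H))$, hence $X=Y$.

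The only genuinely technical points, rather than obstacles, are the justification of the infinite-dimensional Itô formula on the Gelfand triple (standard, cf.\ \cite{PrevotRoeckner}) and the absorption step, which needs the a priori finiteness of $\E[\sup_{[0,T]}\norm{X_{t}^{m}-X_{t}^{n}}_{H}^{2}]$ supplied by Theorem~\ref{thm:sln-ex-GT} --- alternatively one runs the estimate with a localizing sequence of stopping times and passes to the limit. One should also keep in mind that Definition~\ref{def:limit-solution} quantifies over \emph{all} admissible approximating sequences, so the content of the existence statement is precisely that this common limit exists; everything reduces to the single stability estimate above.
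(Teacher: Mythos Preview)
Your argument is correct and is precisely the standard route to such a result: monotonicity of $A$ gives a pathwise $H$-stability estimate for the difference of two approximating solutions, BDG handles the martingale term, and the resulting Cauchy property in $L^{2}(\Omega;C([0,T];H))$ yields a limit independent of the approximation. The technical caveats you flag (It\^{o}'s formula on the Gelfand triple, the absorption step or a localization) are exactly the right ones.

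As for comparison: the paper does not give a self-contained proof of this statement at all --- it simply cites \cite[Theorem 4.6]{GessToelle14}. Your sketch is essentially the argument one finds there (and in the broader monotone SPDE literature, e.g.\ \cite{PrevotRoeckner}), so there is no genuine methodological difference to discuss; you have reconstructed what the reference does.
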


\begin{proof}
See \cite[Theorem 4.6]{GessToelle14}.
\end{proof}

On a bounded, open and convex domain $\Ocal\subset\R^{d}$ with piecewise
$C^{2}$-boundary, we shall consider the $\Phi$-Laplace SPDE
\begin{equation}
dX_{t}=\div[\phi(\nabla X_{t})]\,dt+B\,dW_{t},\quad X_{0}=u_{0},\label{eq:main_Ito_SPDE}
\end{equation}
with zero Dirichlet boundary conditions for $t\in[0,T]$ and $u_{0}\in H:=L^{2}(\Ocal)$,
where $\phi$ is of the form \eqref{eq:phi_form}. We sometimes use
the notation
\[
A(u):=-\div[\phi(\nabla u)],
\]
which is made rigorous in the compact Gelfand triple \eqref{eq:Gelfand}. Indeed,
\begin{equation}
\dualdel{V}{A(u)}{v}=\int_{\mathcal{O}}\langle\phi(\nabla u),\nabla v\rangle\,dx,\quad u,v\in V.\label{eq:A_defi}
\end{equation}

\begin{lem}
Suppose that $\phi$ satisfies (C1)--(C3). Then $A$ satisfies conditions
(A1)--(A3).
\end{lem}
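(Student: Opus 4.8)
The plan is to verify conditions (A1)--(A3) separately for the operator $A(u)=-\div[\phi(\nabla u)]$, defined weakly on the Gelfand triple $V=H^1_0(\Ocal)\subset H=L^2(\Ocal)\subset V^*=H^{-1}(\Ocal)$ via \eqref{eq:A_defi}. Throughout, the key structural fact is that $\phi$ arises from the radially symmetric potential \eqref{eq:phi_form}: writing $\Psi(r):=\int_0^r\psi(t)\,dt$, the function $z\mapsto\Psi(|z|)$ is convex on $\R^d$ (since $\psi$ is odd and nondecreasing on $[0,\infty)$ by (C1)--(C2), so $r\mapsto\Psi(|r|)$ is convex and even, hence $z\mapsto\Psi(|z|)$ is convex as a composition with the Euclidean norm), and $\phi=\nabla[\Psi(|\cdot|)]$ is its gradient. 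Thus $\phi$ is the (continuous, single-valued) subdifferential of a convex function, which immediately gives the pointwise monotonicity \eqref{eq:monotone1}.

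First, for (A2): by (C3) we have $|\psi(r)|\le C(1+|r|)$ for $r\ge0$, and by (C2) this yields $|\phi(z)|=|\psi(|z|)|\le C(1+|z|)$ for all $z\in\R^d$, i.e.\ the sublinear growth \eqref{eq:linear_growth}. Hence $\|A(u)\|_{V^*}=\sup_{\|v\|_V\le1}\int_\Ocal\langle\phi(\nabla u),\nabla v\rangle\,dx\le\int_\Ocal|\phi(\nabla u)|\,|\nabla v|\,dx\le C(|\Ocal|^{1/2}+\|\nabla u\|_{L^2})\|\nabla v\|_{L^2}$, and since $\|\nabla\cdot\|_{L^2}$ is an equivalent norm on $V=H^1_0(\Ocal)$ by the Poincaré inequality, we get $\|A(u)\|_{V^*}\le C'\|u\|_V$ after absorbing the constant (the $|\Ocal|^{1/2}$ term is controlled by $\|u\|_V$ up to a constant, or one simply enlarges $C'$; a cleaner route is $\|A(u)\|_{V^*}\le C(1+\|u\|_V)$ and then note that on the relevant range this is $\le C'\|u\|_V$ — but since the paper's (A2) is a clean linear bound, I would instead observe $\phi(0)=0$ and argue $|\phi(z)|\le C|z|$ by convexity of $\Psi(|\cdot|)$ together with (C3) on large $|z|$ and $C^1$-type control near $0$; in any case this step is routine). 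For (A1), maximal monotonicity of $A$ on the Gelfand triple follows from $A$ being the subdifferential (in $H$, suitably interpreted on $V$) of the convex, lower semicontinuous functional $u\mapsto\int_\Ocal\Psi(|\nabla u|)\,dx$ on $H$ with effective domain contained in $V$; alternatively, one checks hemicontinuity of $A:V\to V^*$ (using continuity of $\phi$ and dominated convergence via the growth bound) together with monotonicity, which by the Browder--Minty theorem upgrades to maximal monotonicity. I would cite the standard fact (e.g.\ from \cite{barbubook,PrevotRoeckner}) that a monotone, hemicontinuous, bounded operator $V\to V^*$ is maximal monotone.

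The main obstacle is (A3), the coercivity-type estimate $2\,{}_{V^*}\langle A(u),T_n(u)\rangle_V\ge-C\|u\|_V^2$ with $C$ independent of $n$, where $T_n=n(\mathrm{Id}-(\mathrm{Id}-\tfrac{\Delta}{n})^{-1})$ is the Yosida approximation of $-\Delta$. Unwinding the definition, $\langle A(u),T_n(u)\rangle=\int_\Ocal\langle\phi(\nabla u),\nabla T_n(u)\rangle\,dx$. The strategy here is to use that $T_n$ is a nonnegative self-adjoint operator commuting with $-\Delta$ (so with the heat semigroup), to write $T_n u=-\Delta R_n u$ where $R_n=(\mathrm{Id}-\tfrac{\Delta}{n})^{-1}$ is the resolvent (a sub-Markovian, $H^1_0$-bounded contraction), and then to integrate by parts: $\int_\Ocal\langle\phi(\nabla u),\nabla T_nu\rangle=\int_\Ocal\langle\phi(\nabla u),-\nabla\Delta R_nu\rangle$. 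Because $z\mapsto\Psi(|z|)$ is convex, one has the pointwise inequality of the Brezis--Crandall--Pazy / gradient-flow type $\langle\phi(\nabla u),\nabla\Delta R_nu\rangle$ controlled using that $D^2[\Psi(|\cdot|)]\ge0$ together with the chain rule for $\div$; more concretely, I would test the defining relation $u-\tfrac1nR_n(-\Delta)u=\dots$ Actually the cleanest approach — and the one I expect the authors to take — is: since $\phi=\nabla\Phi_0$ with $\Phi_0(z)=\Psi(|z|)$ convex, the operator $A$ is (formally) the gradient of $\Ecal(u)=\int_\Ocal\Phi_0(\nabla u)\,dx$, and $\langle A(u),T_nu\rangle=\langle\Ecal'(u),T_nu\rangle$; using that $T_n$ is the generator-type approximation and that $\Ecal$ is convex, one shows $\langle\Ecal'(u),T_nu\rangle\ge-C\|u\|_V^2$ by approximating $u$ by smooth functions, integrating by parts twice to move derivatives onto $\phi(\nabla u)$, and invoking that $D\phi=D^2\Phi_0\ge0$ together with the convexity of $\Ocal$ (which ensures the boundary terms arising from the Dirichlet condition have a favorable sign — this is where piecewise-$C^2$ convex boundary is used, via the nonnegativity of the second fundamental form). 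The constant $C$ can be taken to be $0$ in fact when $a=0$, but (C4)–(C5) are not needed for this lemma; only (C1)–(C3) enter. I expect the formal computation to reduce, after two integrations by parts and discarding a nonnegative boundary integral, to $\int_\Ocal\langle D\phi(\nabla u)\nabla\nabla u,\nabla R_n'\rangle\ge0$-type terms plus an error $O(\|u\|_V^2)$ coming from the lower-order part of $T_n$, giving exactly (A3); making this rigorous for merely $W^{1,1}_{\loc}$ potentials requires a regularization of $\phi$ and a passage to the limit, which is the genuinely technical part but is by now standard in this literature (cf.\ \cite{GessToelle14}).
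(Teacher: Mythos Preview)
Your approach is essentially the same as the paper's: both identify $\phi=\nabla\Psi$ with $\Psi(z)=\int_0^{|z|}\psi(r)\,dr$ convex, represent $A$ via this subdifferential structure, and for the nontrivial condition (A3) invoke the machinery of \cite{GessToelle14} together with convexity of $\Ocal$ to handle the boundary term. The paper is simply terser, writing $A=\nabla^\ast\circ\partial\bigl(\int_\Ocal\Psi(\cdot)\,dx\bigr)\circ\nabla$ and citing \cite[Proposition~7.1, Section~7.2.1, Example~7.9, Proposition~D.2]{GessToelle14} and \cite[Proposition~8.2]{VBMR} directly.

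Two points deserve correction. First, for (A2) your proposed sharpening $|\phi(z)|\le C|z|$ is false for the main examples: for the singular $p$-Laplace $\psi(r)=r^{p-1}$ one has $\psi(r)/r=r^{p-2}\to+\infty$ as $r\to0+$, so no such linear bound through the origin exists. What (C3) actually delivers is $\|A(u)\|_{V^\ast}\le C(1+\|u\|_V)$, and this is the bound used in the abstract framework of \cite{GessToelle14}; the strictly linear form of (A2) as stated here should be read accordingly. Second, for (A3) there is no ``lower-order error $O(\|u\|_V^2)$'': under (C1)--(C3) and convexity of $\Ocal$ one in fact obtains
\[
\langle A(u),T_n u\rangle_H\ge 0
\]
for all $n$, i.e.\ (A3) holds with $C=0$ (this is exactly what \cite[Proposition~8.2]{VBMR} combined with \cite[Proposition~7.1]{GessToelle14} gives, and the paper uses this sign later in the proof of Theorem~\ref{support}). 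Your speculation that $C=0$ only ``when $a=0$'' conflates (A3) with (C4), which is not invoked in this lemma.
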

\begin{proof}
Set
\[
\Psi(x):=\int_{0}^{|x|}\psi(r)\,dr,
\]
which is a radially symmetric, convex, continuous and convex function
with $\Psi(0)=0$ and at most quadratic growth. We have the representation
\[
A(u)=\nabla^{\ast}\circ\partial\left(\int_{\Ocal}\Psi(\cdot)\,dx\right)\circ\nabla u,\quad u\in V,
\]
where $\partial$ denotes the subgradient and $\nabla^{\ast}=(-\Delta)^{-1}\circ\div$
denotes the adjoint operator of $\nabla:V=H_{0}^{1}(\Ocal)\to L^{2}(\Ocal;\R^{d})$.
We may apply the results from \cite[Proposition 7.1 and Section 7.2.1]{GessToelle14}
to yield the claim. See, in particular, \cite[Example 7.9]{GessToelle14}.
We note that we use the convexity assumption on the boundary here,
cf. \cite[Proposition D.2]{GessToelle14} and \cite[Proposition 8.2]{VBMR}.
\end{proof}
Applying Theorem \ref{thm:ex-limit-sln}, we obtain the following
existence and uniqueness result.

\begin{cor}
Suppose that $\phi$ satisfies (C1)--(C3), (C5) and that $B$ satisfies
(B1). Then there exists a unique
generalized solution to \eqref{eq:main_Ito_SPDE} in the sense of
Definition \ref{def:limit-solution}.
\end{cor}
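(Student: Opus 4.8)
The plan is to obtain this corollary as an immediate consequence of Theorem~\ref{thm:ex-limit-sln}, whose abstract hypotheses have essentially already been verified above; the only work is bookkeeping, carried out in three short steps.

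First, I would invoke the Lemma proved just above: conditions (C1)--(C3) on $\psi$ (equivalently on $\phi$, through the representation \eqref{eq:phi_form}) imply that the operator $A(u)=-\div[\phi(\nabla u)]$, realized in the compact Gelfand triple \eqref{eq:Gelfand} via the bilinear form \eqref{eq:A_defi}, satisfies (A1)--(A3). Concretely, (C2) gives the oddness needed to make $A$ well defined through the convex radial potential $\Psi(x)=\int_0^{|x|}\psi(r)\,dr$, (C3) yields the linear growth bound (A2) on $\|A(u)\|_{V^\ast}$, and (C1) together with the subgradient/Yosida-approximation argument of the Lemma gives the maximal monotonicity (A1) and the one-sided bound (A3). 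Second, condition (B1) reads $B\in L_2(U,V)$; composing with the continuous embedding $V=H_0^1(\Ocal)\hookrightarrow H=L^2(\Ocal)$ shows that the same operator satisfies $B\in L_2(U,H)$, which is exactly the noise regularity required by Theorem~\ref{thm:ex-limit-sln}. Third, given $u_0\in H=L^2(\Ocal)$, regard it as the deterministic (hence $\mathcal F_0$-measurable) constant random variable in $L^2(\Omega,\mathcal F_0,\mathbb{P};H)$, and observe that \eqref{eq:main_Ito_SPDE} is precisely the abstract equation \eqref{eq:abstract-Ito-eq-limit-solution} for this choice of $A$ and $B$, by \eqref{eq:A_defi}. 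Theorem~\ref{thm:ex-limit-sln} then delivers a unique generalized solution in the sense of Definition~\ref{def:limit-solution}.

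I do not anticipate any genuine obstacle here: all of the analytic content resides in the cited Lemma and in Theorem~\ref{thm:ex-limit-sln} (that is, in \cite[Theorem 4.6]{GessToelle14}), and the remaining identifications are routine. I would only remark in passing that hypothesis (C5) is listed here for uniformity with the hypotheses used in the subsequent sections, but is not actually needed for this existence and uniqueness statement; the coercivity-type bound $\psi(r)\,r\ge c|r|-K$ it provides will be used later to derive the improved moment estimates and to control the invariant measure, not to construct the solution.
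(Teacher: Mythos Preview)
Your proposal is correct and follows exactly the paper's approach: the paper simply states that the corollary is obtained by ``applying Theorem~\ref{thm:ex-limit-sln}'', relying on the preceding Lemma to verify (A1)--(A3) from (C1)--(C3). Your additional observation that (C5) is not actually used in the existence and uniqueness argument is accurate.
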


\subsection{Feller semigroups and invariant measures}

Following \cite{DaPrZergodicity}, we set
\begin{equation}
P_{t}f(u):=\E[f(X_{t}^{u})],\quad f\in\operatorname{Lip}_{b}(H),\quad t\ge0,\label{eq:defi_semigroup}
\end{equation}
where $\{X_{t}^{u}\}_{t\ge0}$ denotes a (generalized) solution to
\eqref{eq:main_Ito_SPDE} with initial datum $X_{0}=u$. By the results
in \cite[Section 6.4]{GessToelle14}, the extension of $\{P_{t}\}_{t\ge0}$
to $C_{b}(H)$ defines a Markovian transition semigroup. It can
easily be seen that $\{P_{t}\}_{t\ge0}$ is stochastically continuous, that is,
\[\lim_{t\to 0+}P_t f(u)=f(u),\quad\text{for all}\;\;f\in\operatorname{Lip}_{b}(H),\;\;u\in H,\]
cf. \cite[Proposition 2.1.1]{DaPrZergodicity},
and satisfies the Feller property, that is, for any $f\in C_b(H)$ and $t\ge0$ one has $P_t f\in C_b(H)$.
Denote its dual semigroup restricted to finite Borel measures by $\{P^\ast_{t}\}_{t\ge0}$. 
A probability measure $\mu$ is called \emph{invariant} for $\{P_{t}\}_{t\ge0}$ if $P_t^\ast \mu=\mu$ for any $t\ge 0$.

We recall the following concepts defined e.g. in \cite{KPS}.
\begin{defn}
We say that $\{P_{t}\}_{t\ge0}$ is \emph{weak-$\ast$-mean
ergodic} if there exists a Borel probability measure $\mu_\ast$ on $\Bcal(H)$, that is the Borel sets of $H$, such that
\begin{equation}\label{weakstarergodic}\text{w-}\lim_{T\to+\infty}\frac{1}{T}\int_0^T P_t^\ast\nu\,dt=\mu_\ast\quad\text{for every Borel probability measure $\nu$ on $\Bcal(H)$,}\end{equation}
where the limit is in the sense of weak convergence of probability measures.

We say that the \emph{weak law of large numbers} holds for $\{P_{t}\}_{t\ge0}$, for a function $f\in\operatorname{Lip}_{b}(H)$ and for a probability measure $\nu$ on $\Bcal(H)$ if
\[\P_\nu\text{-}\lim_{T\to+\infty}\frac{1}{T}\int_0^T f(X_t)\,dt=\int_H f\,d\mu_\ast,\]
where $\mu_\ast$ denotes the invariant measure of $\{P_{t}\}_{t\ge0}$ and $\{X_t\}_{t\ge 0}$ denotes the Markov process related to $\{P_{t}\}_{t\ge0}$ whose initial distribution is $\nu$ and whose path measure is $\P_\nu$, and where the convergence takes place in $\P_\nu$-probability.
\end{defn}

As noted in \cite[Remark 3]{KPS}, \eqref{weakstarergodic} implies uniqueness of the invariant measure.

\begin{thm}
Suppose that $\phi$ satisfies (C1)--(C5) and that
\begin{equation}\label{eq:coercivitybound}
s<\frac{4}{d+2}\wedge 1.
\end{equation}
Then there exists a unique invariant measure $\mu$ for the semigroup
$\{P_{t}\}_{t\ge0}$ such that $\{P_{t}\}_{t\ge0}$ is weak-$\ast$-mean
ergodic and the weak law of large numbers holds for any $f\in\operatorname{Lip}_{b}(H)$ and any probability measure $\nu$.
\end{thm}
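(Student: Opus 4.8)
The plan is to establish weak-$\ast$-mean ergodicity via the abstract criterion of Komorowski--Peszat--Szarek \cite{KPS}, which requires two ingredients: (a) tightness of the family of time-averaged laws $\{\frac{1}{T}\int_0^T P_t^\ast\delta_u\,dt\}_{T\ge 1}$ starting from a point $u\in V$, and (b) an \emph{e-property} (or asymptotic strong Feller-type estimate) for $\{P_t\}_{t\ge 0}$, i.e. equicontinuity of $\{P_t f\}_{t\ge 0}$ on bounded sets for every $f\in\operatorname{Lip}_b(H)$. Granting both, \cite[Theorem 1 or Corollary]{KPS} yields existence of $\mu_\ast$ satisfying \eqref{weakstarergodic}, and \cite[Remark 3]{KPS} gives uniqueness; the weak law of large numbers then follows from the same reference. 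Since the Feller property and stochastic continuity are already recorded, the work is in (a) and (b).

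First I would prove tightness. Applying It\^o's formula to $\|X_t^u\|_H^2$ for the solution with $B\in L_2(U,V)$, one gets
\[
\E\|X_t^u\|_H^2 + 2\E\int_0^t {}_{V^\ast}\langle A(X_\tau^u),X_\tau^u\rangle_V\,d\tau = \|u\|_H^2 + t\,\|B\|_{L_2(U,H)}^2.
\]
The key is a coercivity estimate for $A$: using (C4), (C5) and the form \eqref{eq:phi_form}, one shows ${}_{V^\ast}\langle A(v),v\rangle_V = \int_\Ocal \psi(|\nabla v|)|\nabla v|\,dx \ge \kappa\|v\|_V^{2\theta} - \tilde K$ for a suitable exponent $\theta=\theta(s,d)>0$; here the hypothesis $s<\frac{4}{d+2}\wedge 1$ from \eqref{eq:coercivitybound} is exactly what makes the relevant interpolation/Sobolev inequality (controlling a lower-order $L^1$-type norm of $\nabla v$ by $\|v\|_V$ via the embedding $H_0^1\hookrightarrow L^{2d/(d-2)}$) produce $\theta>0$ together with integrability of $t\mapsto (\text{l.h.s.})^{-1}$. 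Integrating in time and using Jensen gives a uniform-in-$T$ bound $\frac{1}{T}\int_0^T \E\|X_t^u\|_V^{2\theta}\,dt \le C(1+\|u\|_H^2)$, and since $V=H_0^1(\Ocal)\hookrightarrow H=L^2(\Ocal)$ compactly on the bounded domain $\Ocal$, Chebyshev yields tightness of the time averages.

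Second I would establish the e-property. For two initial data $u,v\in V$, the difference $Y_t:=X_t^u-X_t^v$ solves the random PDE $\partial_t Y_t + A(X_t^u)-A(X_t^v)=0$ (the noise cancels), so by monotonicity \eqref{eq:monotone1} of $\phi$ one has $\frac{d}{dt}\|Y_t\|_H^2 = -2\,{}_{V^\ast}\langle A(X_t^u)-A(X_t^v),Y_t\rangle_V\le 0$, hence $\|X_t^u-X_t^v\|_H\le\|u-v\|_H$ for all $t\ge 0$, $\P$-a.s. This contraction gives, for $f\in\operatorname{Lip}_b(H)$, the uniform Lipschitz bound $|P_tf(u)-P_tf(v)|\le |f|_{\operatorname{Lip}}\|u-v\|_H$ for all $t\ge 0$, which in particular is the equicontinuity needed for the e-property. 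Combined with tightness from the previous step, the \cite{KPS} machinery applies.

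The main obstacle is the coercivity/interpolation step: extracting a genuinely positive power $\theta$ in the estimate ${}_{V^\ast}\langle A(v),v\rangle_V\gtrsim \|v\|_V^{2\theta}-C$ from the one-sided polynomial control (C4)--(C5) on $\psi$ is delicate, because the diffusivity degenerates at the origin and $\psi$ has only sublinear growth, so no naive coercivity in $V$ holds; it is precisely here that the restriction \eqref{eq:coercivitybound} $s<\frac{4}{d+2}\wedge 1$ enters, via a careful splitting of $\Ocal$ into the regions $\{|\nabla v|\le 1\}$ and $\{|\nabla v|>1\}$, H\"older's inequality, and the Sobolev embedding, balancing exponents so that $\theta>0$. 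Everything else — It\^o's formula, the monotonicity contraction, and invoking \cite{KPS} — is routine once this estimate is in hand.
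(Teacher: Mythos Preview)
Your proposal has two gaps, one repairable and one more serious.

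\textbf{The coercivity claim is false.} You assert ${}_{V^\ast}\langle A(v),v\rangle_V \ge \kappa\|v\|_V^{2\theta}-\tilde K$ for some $\theta>0$, but no such inequality can hold: take gradients concentrating on small sets so that $\int_\Ocal|\nabla v|^{2-s}\,dx$ is small while $\int_\Ocal|\nabla v|^2\,dx$ stays bounded away from zero. What (C5) together with Lemma \ref{lem:growth} actually gives is
\[
{}_{V^\ast}\langle A(v),v\rangle_V=\int_\Ocal\psi(|\nabla v|)\,|\nabla v|\,dx\;\ge\;c\int_\Ocal|\nabla v|^{2-s}\,dx-K|\Ocal|,
\]
so the Lyapunov functional is $\Theta(v)=\|\nabla v\|_{L^{2-s}}^{2-s}$, not a power of the $V$-norm. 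The role of \eqref{eq:coercivitybound} is exactly that $2-s>\frac{2d}{d+2}$, which by Rellich--Kondrachov makes $W_0^{1,2-s}(\Ocal)\hookrightarrow L^2(\Ocal)$ compact, hence $\Theta$ has compact sublevel sets in $H$. No interpolation against $\|v\|_V$ is needed (or possible). With this correction your tightness step goes through.

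\textbf{The e-property alone does not close the KPS argument.} The contraction $\|X_t^u-X_t^v\|_H\le\|u-v\|_H$ indeed yields the e-property, but \cite[Theorem~1]{KPS} requires a further \emph{lower-bound} (concentrating) condition: there must exist $z\in H$ such that for every $\delta>0$ and every bounded set $A$, $\inf_{u\in A}\liminf_{T\to\infty}\frac{1}{T}\int_0^T P_t(u,B(z,\delta))\,dt>0$. Non-expansion by itself cannot supply this---a monotone system with several equilibria would satisfy your (a) and (b) yet have multiple invariant measures. The paper obtains this missing ingredient (through the framework of \cite[Theorem~5.6, Remark~5.5]{GessToelle14}) by proving that the \emph{deterministic} flow ($B\equiv 0$) satisfies $\|v_t\|_H\to 0$ for every initial datum, via the differential inequality $\frac{d}{dt}\|v_t\|_H^2\le -2C\|v_t\|_H^{2-s}+2K|\Ocal|$ and an explicit comparison argument. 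This asymptotic attraction to the origin is what forces all trajectories (for the stochastic equation with additive noise) to cluster near a common point, delivering the KPS lower bound. Your outline omits this step entirely.
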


\begin{proof}
We would like to use \cite[Remark 5.5 and Proposition 7.1]{GessToelle14}
to see that the hypotheses of \cite[Theorem 5.6]{GessToelle14}
are satisfied. For that we only have to verify that our equation admits a Lyapunov function with compact sublevel sets and that the solution to the deterministic counterpart of our equation (that is, $B\equiv 0$) vanishes for $t\to+\infty$.

By Lemma \ref{lem:growth}, we get for $u\in V$,
\begin{equation}
2\dualdel{V}{A(u)}{u}=2\int_{\mathcal{O}}\langle\phi(\nabla u),\nabla u\rangle\,dx\ge2\int_{\Ocal}|\nabla u|^{2-s}\,dx-2K|\Ocal|=:\Theta(u),\label{eq:det_bound}
\end{equation}
where we denote $|\Ocal|:=\int_{\Ocal}\,dx$. By \eqref{eq:coercivitybound}
and the Rellich-Kodrachov theorem, $W_{0}^{1,2-s}(\Ocal)\subset L^{2}(\Ocal)$
compactly, thus $\Theta$ is a Lyapunov function with compact sublevels.
Let $(v_{t})_{t\ge0}$ be a solution to the deterministic equation,
\[
dv_{t}+A(v_{t})\,dt=0,\quad t>0,\quad v_{0}=u.
\]
It remains to prove that
\[
\lim_{t\to+\infty}\|v_{t}\|_{H}=0
\]
for every initial datum $u\in H$. By the chain rule,
\[
\frac{d}{dt}\|v_{t}\|_{H}^{2}\le-2C\left(\|v_{t}\|_{H}^{2}\right)^{\frac{2-s}{2}}+2K|\Ocal|.
\]
In analogy to \cite[Proof of Remark 5.5]{GessToelle14}, we see that
$f(t):=\|v_{t}\|_{H}^{2}e^{-2K|\Ocal|t}$ is a subsolution to the
ordinary differential equation
\[
f^{'}(t)=-2Cf(t)^{\frac{2-s}{2}}.
\]
Hence
\[
\|v_{t}\|_{H}^{2}\le e^{2K|\Ocal|t}\left(\left(\|u\|_{H}^{s}-Cst\right)\vee0\right)^{\frac{2}{s}}\longrightarrow0,
\]
as $t\to+\infty$. Now, we may apply \cite[Remark 5.5 and Proposition 7.1]{GessToelle14}
and see that the hypotheses for \cite[Theorem 5.6]{GessToelle14}
are satisfied. Thus the claimed result follows.
\end{proof}

The above result is improved in Theorems \ref{support} and \ref{thm:decay} below, as we just need to assume (C6) instead of \eqref{eq:coercivitybound}.
We refer to \cite{BCR2019} for a new unified two-step approach to proving the existence of invariant measures.

\subsection{Discussion of the assumptions} 

If $s\in (0,1]$, (C5) follows readily from the other assumptions.
\begin{lem}
\label{lem:growth} Assumptions (C1)--(C4), together with the assumption
that (C4) holds with $s\in(0,1]$, imply that there exist constants
$c=c(a,b)>0$ and $K=K(a,b,s,C)\ge0$ such that
\[
\psi(r)\cdot r\ge c|r|^{2-s}-K,\quad r\ge0.
\]
In particular, then (C5) follows.
\end{lem}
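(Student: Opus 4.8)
The plan is to integrate the differential inequality \eqref{eq:linear_growth}-type lower bound that (C4) provides for $\psi'$, and then use (C1)--(C3) to convert this into a bound on $\psi(r)\cdot r$. First I would observe that (C4) with $s\in(0,1]$ gives $\psi'(r)\ge \frac{1}{a+b r^s}$ for every $r>0$ (using that $\psi'>0$ by (C1), so the absolute value is harmless). Integrating from $0$ to $r$ — which is legitimate since $\psi\in W^{1,1}_{\loc}$ and $\psi(0)=0$ by (C2) — yields
\[
\psi(r)\ge\int_0^r\frac{dt}{a+b t^s},\quad r\ge0.
\]
The integral on the right is comparable to $r^{1-s}$ for large $r$: indeed $\int_0^r(a+bt^s)^{-1}\,dt\ge\int_0^r(a+b t^s)^{-1}\,dt$, and splitting at some $r_0$ where $bt^s\ge a$ one gets a lower bound of the form $\tilde c\, r^{1-s}-\tilde K$ for explicit $\tilde c=\tilde c(a,b)>0$ and $\tilde K=\tilde K(a,b,s)\ge0$. (When $s=1$ the primitive is $\frac1b\log(1+\frac{b}{a}r)$, which still dominates any constant minus nothing, i.e. grows, but here one has to be slightly careful — see the obstacle paragraph.)

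Next I would multiply by $r\ge0$: from $\psi(r)\ge \tilde c r^{1-s}-\tilde K$ we obtain
\[
\psi(r)\cdot r\ge \tilde c\, r^{2-s}-\tilde K r,\quad r\ge0.
\]
Since $2-s\ge1$ (because $s\le1$), the term $\tilde c r^{2-s}$ dominates $\tilde K r$ for large $r$, up to an additive constant: by Young's inequality, $\tilde K r\le \frac{\tilde c}{2}r^{2-s}+K'$ for some $K'=K'(\tilde c,\tilde K,s)\ge0$ (when $s<1$), so that $\psi(r)\cdot r\ge\frac{\tilde c}{2}r^{2-s}-K'$. This gives the claimed inequality with $c=\tilde c/2$ and $K=K'$, which depend only on $a,b,s$ (and, via $\tilde K$, not on $C$ — though the lemma statement allows dependence on $C$, so this is fine). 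The final assertion "(C5) follows" is then immediate: since $r^{2-s}\ge r$ whenever $r\ge1$ and $r^{2-s}\le 1$ is irrelevant for small $r$ (there one simply absorbs into $K$), we get $\psi(r)\cdot r\ge c r^{2-s}-K\ge c\,r - (K+c)$ for all $r\ge0$, which is (C5) with constants $c$ and $K+c$.

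The main obstacle is the borderline case $s=1$, where the primitive of $(a+bt^s)^{-1}$ is only logarithmic, not a power $r^{1-s}=r^0=$ constant. In that case the naive bound $\psi(r)\ge\tilde c\log(1+\tfrac b a r)-\tilde K$ gives $\psi(r)\cdot r\gtrsim r\log r$, which is in fact \emph{stronger} than $c r^{2-s}=cr$ needed, so one still wins — but the bookkeeping of constants must be done so that $c=c(a,b)$ genuinely does not depend on the logarithm's slow growth; concretely one picks any $r_1$ with $\tilde c\log(1+\tfrac b a r_1)\ge 2\tilde K$ and notes $\psi(r)r\ge\tilde K r\ge$ nothing useful — instead one should directly use $\psi(r)\ge \tfrac12\tilde c\log(1+\tfrac ba r)$ for $r\ge r_1$ and hence $\psi(r)r\ge\tfrac12\tilde c r\log(1+\tfrac ba r)\ge c r$ for $r\ge r_1$ (absorbing $r<r_1$ into $K$). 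A second, more technical point is justifying the integration of $\psi'$ and the identity $\psi(r)=\int_0^r\psi'$: this uses $\psi\in W^{1,1}_{\loc}(\R)$ together with $\psi(0)=0$ (from oddness (C2)), which guarantees the absolutely continuous representative, so the fundamental theorem of calculus applies on every compact interval. Neither point is deep; they just require care with the constants' dependencies as stated in the lemma.
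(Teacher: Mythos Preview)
Your proof is correct. Both your argument and the paper's begin by integrating the differential inequality from (C4), but the executions differ. The paper integrates $(a+bt^{s})\psi'(t)\ge 1$ \emph{by parts} over $[0,x]$, drops the nonnegative term $bs\int_{0}^{x}t^{s-1}\psi(t)\,dt$, and obtains the product inequality $(a+b|x|^{s})\psi(x)\ge x$ directly; multiplying through by $|x|^{1-s}$ and invoking (C3) to absorb the stray $a\psi(x)$ term then gives $c=\tfrac{1}{2(a+b)}$, matching the claimed dependence $c=c(a,b)$. You instead integrate $\psi'(t)\ge(a+bt^{s})^{-1}$ directly and estimate the resulting integral as $\tilde c\,r^{1-s}-\tilde K$. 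This has the virtue of never touching (C3) --- your $K$ does not depend on $C$ --- but the constant $\tilde c\sim\tfrac{1}{b(1-s)}$ you extract depends on $s$, so strictly speaking you prove the variant $c=c(a,b,s)$ rather than the stated $c=c(a,b)$. The paper's route also handles $s=1$ uniformly (from $(a+bx)\psi(x)\ge x$ one gets $\psi(x)x\ge\tfrac{x^{2}}{a+bx}\ge\tfrac{x}{a+b}$ for $x\ge1$ with no logarithm in sight), whereas your direct integration forces the slightly awkward case distinction you flag in the obstacle paragraph. If you want to recover the exact dependence $c=c(a,b)$ within your approach, observe that the crude lower bound $\int_{0}^{r}(a+bt^{s})^{-1}\,dt\ge \tfrac{r}{a+br^{s}}$ (minimum of a decreasing integrand times the length of the interval) reproduces precisely the paper's product inequality, after which one can proceed without the $\tfrac{1}{1-s}$ factor.
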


\begin{proof}
Let $s\in(0,1]$. Integrating $(a+b|r|^{s})|\psi'(r)|\ge1$ by parts
over $[0,x]$, $x\ge0$ yields
\[
(a+b|x|^{s})\psi(x)\ge x+bs\int_{0}^{x}\psi(r)|r|^{s-1}\,dr\ge x.
\]
Hence,
\[
a|x|\psi(x)+a\psi(x)+b|x|\psi(x)\ge a|x|^{1-s}\psi(x)+b|x|\psi(x)\ge|x|^{2-s}.
\]
Using (C3) and rearranging terms yields
\[
K(a,s,C)+\frac{1}{2}|x|^{2-s}+(a+b)|x|\psi(x)\ge|x|^{2-s},
\]
which proves the claim for $s\in(0,1]$.
\end{proof}
In Section \ref{sec:decayandexamples}, we shall discuss the following examples.
\begin{example}
\label{exa:main_examples}The following examples satisfy Assumptions
(C1)--(C5).
\begin{description}
\item [{Singular$~$$p$-Laplace}] Let $p\in(1,2)$. Then $\psi(r)=|r|^{p-2}r$
satisfies (C1)--(C5) with $C=c=K=1$, $a=0$, $b=(p-1)^{-1}$, $s=2-p$.
\item [{Non-Newtonian~fluids}] Let $p\in(1,2)$. Then $\psi(r)=(1+|r|^{2})^{\frac{p-2}{2}}r$
satisfies (C1)--(C5) with $C=K=c=1$, $a=b=(p-1)^{-1}$, $s=2-p$.
\item [{$\log$-diffusion}] Let $\psi(r)=\log(1+|r|)\sgn(r)$. Then
$\psi$ satisfies (C1)--(C5) with $C=a=b=s=c=1$, $K=2$.
\item [{Minimal$~$surface$~$flow}] Let $\psi(r)=\frac{r}{\sqrt{1+|r|^{2}}}$.
Then $\psi$ satisfies (C1)--(C5) with $C=a=b=s=c=K=1$.
\item [{Curve$~$shortening$~$flow}] Let $\psi(r)=\arctan(r)$. Then $\psi$
satisfies (C1)--(C5) with $C=a=b=c=K=1$, $s=2$.
\end{description}
\end{example}

\begin{rem}\label{rem:improvement}
Note that for the first two examples, \eqref{eq:coercivitybound} translates to $p>\frac{2d}{d+2}$, and $d<\frac{2p}{2-p}$ respectively, which was assumed e.g. in \cite{LiuToelle}. However, for the first two examples, we just need to assume condition (C6) which is $p\ge 2-\frac{4}{d}$, and $d\le \frac{4}{2-p}$ respectively.
\end{rem}

\section{\label{sec:Stability}Stability}

In this section, stability, that is, rates of convergence for large times for the solutions to \eqref{eq:main_Ito_SPDE} starting at two distinct initial data
will be established.
Unless otherwise stated, we assume that conditions (C1)--(C6) hold.
On a bounded, open and convex domain $\Ocal\subset\R^{d}$ with piecewise
$C^{2}$-boundary, we shall consider the $\Phi$-Laplace SPDE \eqref{eq:main_Ito_SPDE} such that $B$ satisfies (B1).

Let us first record a lemma for the situation that $d=1$. Let $\Ocal=(0,L)\subset\R$,
for some $L>0$. Set $I:=\overline{\Ocal}=[0,L]$. Let $\Delta=\partial_{xx}$
be the Dirichlet Laplace on $\Ocal$.

\begin{lem}
\label{lem:decay} Let $u,v\in V=H_{0}^{1}(I)$. For the line segment
$\gamma:[0,1]\to L^{2}(0,L),\ \lambda\mapsto\gamma(\lambda,u,v)$,
where $\gamma(\lambda,u,v)(x):=\partial_{x}u(x)+\lambda(\partial_{x}v(x)-\partial_{x}u(x))$,
we have that
\begin{equation}
_{H_{\phantom{0}}^{-1}}\langle A(u)-A(v),u-v\rangle_{H_{0}^{1}}\ge\frac{\lVert u-v\rVert_{H}^{2}}{L}\int_{0}^{1}\left(\int_{0}^{L}\left(\psi'(\gamma(\lambda,u,v)(x))\right)^{-1}\,dx\right)^{-1}\,d\lambda.\label{eq:decay}
\end{equation}
\end{lem}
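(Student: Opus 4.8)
The plan is to reduce the $H^{-1}$–$H^1_0$ duality pairing to a one‑dimensional integral inequality by using the explicit form of $A$ in one spatial dimension, together with the elementary fact that for $d=1$ the operator $\nabla^\ast = (-\Delta)^{-1}\partial_x$ can be written down explicitly. First I would write
\[
{}_{H^{-1}}\langle A(u)-A(v),u-v\rangle_{H^1_0} = \int_0^L \bigl(\phi(\partial_x u(x))-\phi(\partial_x v(x))\bigr)\,\partial_x(u-v)(x)\,dx,
\]
using \eqref{eq:A_defi}, and note that in dimension one $\phi(r)=\psi(r)$ (the radial form \eqref{eq:phi_form} collapses to the scalar $\psi$). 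Then, writing the difference $\phi(\partial_x u)-\phi(\partial_x v)$ along the segment $\gamma(\lambda,u,v)$ via the fundamental theorem of calculus,
\[
\psi(\partial_x u(x))-\psi(\partial_x v(x)) = \Bigl(\int_0^1 \psi'(\gamma(\lambda,u,v)(x))\,d\lambda\Bigr)\,(\partial_x u(x)-\partial_x v(x)),
\]
(justified by (C1) and $\psi\in W^{1,1}_{\loc}$, with a standard approximation argument if $\psi'$ is only $L^1_{\loc}$), one gets
\[
{}_{H^{-1}}\langle A(u)-A(v),u-v\rangle_{H^1_0} = \int_0^1\!\!\int_0^L \psi'(\gamma(\lambda,u,v)(x))\,\bigl(\partial_x(u-v)(x)\bigr)^2\,dx\,d\lambda.
\]

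Next I would apply, for each fixed $\lambda$, a Cauchy–Schwarz (or Jensen) type inequality to the inner integral. Writing $w:=u-v\in H^1_0(I)$ and $g(x):=\psi'(\gamma(\lambda,u,v)(x))>0$, one has by Cauchy–Schwarz
\[
\Bigl(\int_0^L (\partial_x w)^2\,dx\Bigr)\Bigl(\int_0^L g^{-1}\,dx\Bigr)\ \ge\ \Bigl(\int_0^L \sqrt{g}\,\lvert\partial_x w\rvert\cdot\frac{1}{\sqrt g}\,dx\Bigr)^2
\]
— wait, that gives the wrong direction; instead I should bound directly
\[
\int_0^L g\,(\partial_x w)^2\,dx \ \ge\ \frac{\bigl(\int_0^L \lvert\partial_x w\rvert\,dx\bigr)^2}{\int_0^L g^{-1}\,dx},
\]
which is Cauchy–Schwarz applied to $\lvert\partial_x w\rvert = \sqrt{g}\,\lvert\partial_x w\rvert\cdot g^{-1/2}$. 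Finally, since $w\in H^1_0(0,L)$ vanishes at both endpoints, $\lvert w(x)\rvert = \lvert\int_0^x \partial_x w\rvert \le \int_0^L\lvert\partial_x w\rvert$, hence $\int_0^L \lvert\partial_x w\rvert\,dx \ge \frac{1}{\sqrt L}\,\lVert w\rVert_{L^2(0,L)}$ after one more Cauchy–Schwarz on the spatial variable; squaring gives $\bigl(\int_0^L\lvert\partial_x w\rvert\bigr)^2 \ge \frac{1}{L}\lVert w\rVert_H^2$. Putting these together and integrating over $\lambda\in[0,1]$ yields exactly \eqref{eq:decay}.

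The main obstacle I anticipate is the rigorous justification of the pointwise chain‑rule identity for $\psi$ that is only $W^{1,1}_{\loc}$: one must be careful that $\gamma(\lambda,u,v)(x)$ ranges over a set on which $\psi'$ is genuinely defined a.e. and that the Fubini interchange between the $\lambda$‑ and $x$‑integrals is legitimate — this is where an approximation of $\psi$ by smooth monotone functions, or a Lusin/absolute‑continuity argument along almost every segment, is needed. The rest (the two applications of Cauchy–Schwarz and the Poincaré‑type endpoint estimate) is elementary. A minor point to check is that $g^{-1}$ may fail to be integrable, i.e. $\int_0^L g^{-1}\,dx = +\infty$ on a set of $\lambda$; in that case the corresponding summand in the right‑hand side is interpreted as $0$ and the inequality is trivially true there, so no loss occurs.
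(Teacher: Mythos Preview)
Your proposal is correct and follows essentially the same approach as the paper: both rewrite the duality pairing as $\int_0^1\int_0^L \psi'(\gamma)(\partial_x(u-v))^2\,dx\,d\lambda$ via the fundamental theorem of calculus along the segment, and then combine a weighted Cauchy--Schwarz inequality (with weight $\psi'(\gamma)$) with the endpoint representation $w(x)=\int_0^x\partial_y w\,dy$. The only cosmetic difference is the order of operations---the paper first bounds $|w(x)|^2$ pointwise by $\bigl(\int_0^L g(\partial_x w)^2\bigr)\bigl(\int_0^L g^{-1}\bigr)$ and then integrates in $x$, whereas you first apply Cauchy--Schwarz to $\int_0^L|\partial_x w|$ and afterwards invoke the Poincar\'e-type bound; both routes yield the identical inequality.
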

\begin{proof}
First note that (C4) guarantees that the expression on the RHS of
\eqref{eq:decay} is almost surely less or equal zero as for $d\lambda$-a.e.
$\lambda\in[0,1]$, 
\[
\int_{0}^{L}\left(\psi'(\gamma(\lambda,u,v)(x))\right)^{-1}\,dx\leq2^{(s-1)\vee0}\int_{0}^{L}\left(a+b|\partial_{x}u(x)|^{s}+b|\partial_{x}v(x)|^{s}\right)\,dx<+\infty.
\]
We find the identity
\begin{align*}
 & _{H_{\phantom{0}}^{-1}}\langle A(u)-A(v),u-v\rangle_{H_{0}^{1}}=\int_{0}^{L}\left(\phi(\partial_{x}u(x))-\phi(\partial_{x}v(x))\right)(\partial_{x}u(x)-\partial_{x}v(x))\,dx\\
= & -\int_{0}^{L}\int_{0}^{1}\frac{d}{d\lambda}\phi(\gamma(\lambda,u,v)(x))(\partial_{x}u(x)-\partial_{x}v(x))\,d\lambda\,dx\\
= & \int_{0}^{L}\int_{0}^{1}\psi'(\gamma(\lambda,u,v)(x))(\partial_{x}u(x)-\partial_{x}v(x))^{2}\,d\lambda\,dx
\end{align*}
Let $\tilde{u},\tilde{v}$ denote continuous representatives of $u,v\in H_{0}^{1}(0,L)$.
Note that $\tilde{u}(0)=\tilde{v}(0)=0$. Using the fundamental theorem
of calculus and H\"{o}lder inequality yields 
\begin{align*}
 & \left|\tilde{u}(x)-\tilde{v}(x)\right|^{2}=\left(\int_{0}^{x}(\partial_{y}u(y)-\partial_{y}v(y))\,dy\right)^{2}\\
\leq & \left(\int_{0}^{L}\psi'(\gamma(\lambda,u,v)(x))(\partial_{x}u(x)-\partial_{x}v(x))^{2}\,dx\right)\times\left(\int_{0}^{L}\left(\psi'(\gamma(\lambda,u,v)(x))\right)^{-1}\,dx\right).
\end{align*}
Hence,
\[
\begin{aligned} & \frac{1}{L}\int_{0}^{L}\left|u(x)-v(x)\right|\,dx\\
\leq & \left(\int_{0}^{L}\psi'(\gamma(\lambda,u,v)(x))(\partial_{x}u(x)-\partial_{x}v(x))^{2}\,dx\right)\times\left(\int_{0}^{L}\left(\psi'(\gamma(\lambda,u,v)(x))\right)^{-1}\,dx\right).
\end{aligned}
\]
By dividing and integrating with respect to $\lambda$, we achieve
\[
\begin{gathered}\frac{\lVert u-v\rVert_{H}^{2}}{L}\int_{0}^{1}\left(\int_{0}^{L}\left(\psi'(\gamma(\lambda,u,v)(x))\right)^{-1}\,dx\right)^{-1}\,d\lambda\\
\le\int_{0}^{1}\int_{0}^{L}\psi'(\gamma(\lambda,u,v)(x))(\partial_{x}u(x)-\partial_{x}v(x))^{2}\,dx\,d\lambda.
\end{gathered}
\]
\end{proof}

Now, consider the general case that $d\ge1$. Let $\Ocal\subset\R^{d}$ be
a bounded, open and convex domain with piecewise $C^{2}$-boundary.
We denote $|\Ocal|:=\int_{\Ocal}\,dx$.
We define $\Psi:\R^{d}\to\R$
by 
\begin{equation}
\Psi(x):=\int_{0}^{\abs{x}}\psi(r)\,dr.\label{eq:convex-potential}
\end{equation}
We see that $\Psi$ is a radially symmetric convex function of at
most quadratic growth, and its first and second derivatives are given
by 
\[
D\Psi(x)=\phi(x)=\begin{cases}
0, & \text{if }x=0,\\
\psi(\abs{x})\frac{x}{\abs{x}}, & \text{if }x\ne0,
\end{cases}
\]
and by
$$ 
(D^{2}\Psi(x))_{(i,j)} 
= \frac{\psi(\abs{x})}{\abs{x}}\delta_{ij} + \left(\psi'(\abs{x})-\frac{\psi(\abs{x})}{\abs{x}}\right)\frac{x_{i}x_{j}}{\abs{x}^{2}},
$$
with special case $(D^2\Psi(x))_{(1,1)}=\psi'(|x|)$ for $d=1$.

Using the Gram matrix, one obtains that the eigenvalues of $D^{2}\Psi(x)$
are $\psi'(\abs{x})$ and $\frac{\psi(\abs{x})}{\abs{x}}$ with respective
multiplicities $1$ and $d-1$. In particular, 
\begin{equation}
\label{ellipticity}  
\langle D^{2}\Psi(x) h,h\rangle \ge \psi'(\abs{x})\wedge \left( \frac{\psi(\abs{x})}{\abs{x}}\right) |h|^2 
\quad\text{for every}\quad h\in\mathbb{R}^d . 
\end{equation} 
As above, for $u,v\in V,$ the operator
$A(u)$ can be defined via 
\[
\dualdel{V}{A(u)}{v}=\int_{\mathcal{O}}\langle\phi(\nabla u),\nabla v\rangle\,dx.
\]
For $u\in C^2(\mathcal O)$ we have the representation  
\begin{equation} 
\label{representation:A}
A(u) = - 
 \frac{\psi(\abs{\nabla u})}{\abs{\nabla u}} \Delta u -\left( \psi'(\abs{\nabla u})-\frac{\psi(\abs{\nabla u})}{\abs{\nabla u}}\right)
\sum_{i,j=1}^d \frac{\partial_{x_i}\partial_{x_j} u \partial_{x_i} u \partial_{x_j} u}{|\nabla u|^2}.  
\end{equation}
Also, on a formal level, 
\[
\begin{aligned}\langle A(u),v\rangle_{V}= & -\sum_{i=1}^{d}\int_{\mathcal{\Ocal}}\partial_{x_{i}}(\div(\phi(\nabla u))\,\partial_{x_{i}}v\,dx\\
= & \int_{\mathcal{\Ocal}}\div(\phi(\nabla u))\,\Delta v\,dx\\
= & \sum_{i=1}^{d}\int_{\mathcal{O}}\sprod{D^{2}\Psi(\nabla u)\nabla\partial_{x_{i}}u}{\nabla\partial_{x_{i}}v}\,dx.
\end{aligned}
\]

\begin{lem}
\label{lemma:stability} Let $d\ge2$. For $u,v\in H_{0}^{1}(\Ocal)$,
define $\gamma:[0,1]\to L^{2}(\Ocal;\R^{d})$, $\lambda\mapsto\gamma(\lambda,u,v)$
via $\gamma(\lambda,u,v)(x)=\nabla u(x)+\lambda(\nabla v(x)-\nabla u(x))$.
Set $\Lambda_{\min}(x):=\psi'(\abs{x})\wedge\frac{\psi(\abs{x})}{\abs{x}}$.
Then there exists $C=C(\Ocal,d)>0$,
\[
_{H_{\phantom{0}}^{-1}}\langle A(u)-A(v),u-v\rangle_{H_{0}^{1}}\ge C\norm{u-v}_{L^{2}}^{2}\int_{0}^{1}\left(\int_{\mathcal{O}}\left(\Lambda_{\min}(\gamma(\lambda,u,v)(x))\right)^{-\frac{d}{2}}\,dx\right)^{-\frac{2}{d}}\,d\lambda.
\]
\end{lem}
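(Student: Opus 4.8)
The plan is to mimic the one-dimensional argument of Lemma \ref{lem:decay}, replacing the elementary $L^\infty$–bound on $\tilde u-\tilde v$ coming from the fundamental theorem of calculus with a Gagliardo–Nirenberg–Sobolev estimate in dimension $d\ge 2$. First I would write, using \eqref{representation:A} and the formal computation preceding the statement (justified by approximation with smooth functions and the fact that $A$ is the subgradient representation recalled earlier), the identity
\[
\dualdel{V}{A(u)-A(v)}{u-v}=\int_0^1\int_{\Ocal}\sprod{D^2\Psi(\gamma(\lambda,u,v)(x))\,(\nabla u-\nabla v)(x)}{(\nabla u-\nabla v)(x)}\,dx\,d\lambda,
\]
obtained by writing $\phi(\nabla u)-\phi(\nabla v)=\int_0^1\frac{d}{d\lambda}\phi(\gamma(\lambda,u,v))\,d\lambda$ and using $D\phi=D^2\Psi$. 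By the ellipticity bound \eqref{ellipticity}, the integrand is bounded below by $\Lambda_{\min}(\gamma(\lambda,u,v)(x))\,|\nabla u(x)-\nabla v(x)|^2$, so it suffices to bound $\norm{u-v}_{L^2}^2$ by the right-hand quantity built from $w:=u-v$.

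Fix $\lambda$ and set $a(x):=\Lambda_{\min}(\gamma(\lambda,u,v)(x))$. I would split the gradient via Cauchy–Schwarz as
\[
|\nabla w|=\big(a^{1/2}|\nabla w|\big)\,a^{-1/2},
\]
and estimate $\norm{\nabla w}_{L^q(\Ocal)}\le\big(\int_\Ocal a|\nabla w|^2\big)^{1/2}\big(\int_\Ocal a^{-q/(2-q)}\big)^{(2-q)/(2q)}$ by Hölder with exponents $2/q$ and $2/(2-q)$. Choosing $q=\frac{2d}{d+2}$ makes the second exponent's inner power equal to $d/2$, i.e. the factor becomes $\big(\int_\Ocal a^{-d/2}\big)^{1/d}$, and simultaneously makes $W^{1,q}_0(\Ocal)$ embed into $L^2(\Ocal)$ by Sobolev (this is exactly the conjugate Sobolev exponent, $q^\ast=2$). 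Hence by the Poincaré–Sobolev inequality on the bounded convex domain $\Ocal$, with $w\in W^{1,q}_0(\Ocal)$,
\[
\norm{w}_{L^2(\Ocal)}\le C(\Ocal,d)\,\norm{\nabla w}_{L^q(\Ocal)}\le C(\Ocal,d)\Big(\int_\Ocal a\,|\nabla w|^2\,dx\Big)^{1/2}\Big(\int_\Ocal a^{-d/2}\,dx\Big)^{1/d}.
\]
Squaring, dividing by $\big(\int_\Ocal a^{-d/2}\big)^{2/d}$, and integrating over $\lambda\in[0,1]$ (using that $\lambda\mapsto\int_\Ocal a^{-d/2}$ is finite for a.e. $\lambda$, which follows from $(C4)$ as in Lemma \ref{lem:decay} since $\Lambda_{\min}\ge$ a positive power controlled by $\psi'$ and $\psi(r)/r$) yields
\[
C(\Ocal,d)^{-2}\,\norm{w}_{L^2}^2\int_0^1\Big(\int_\Ocal a^{-d/2}\,dx\Big)^{-2/d}\,d\lambda\le\int_0^1\int_\Ocal a\,|\nabla w|^2\,dx\,d\lambda\le\dualdel{V}{A(u)-A(v)}{u-v},
\]
which is the claim after relabelling the constant.

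The step I expect to be delicate is the rigorous justification of the chain-rule identity for general $u,v\in H^1_0(\Ocal)$ rather than $C^2$ functions: $A$ is only densely defined on $V$ with values in $V^\ast$, so one must argue by the monotone/subgradient structure — writing $A=\nabla^\ast\circ\partial\big(\int_\Ocal\Psi\big)\circ\nabla$ and using that $\partial$ of the convex integral functional acts pointwise as $D\Psi=\phi$ — and then pass to the limit along smooth approximations $u_n\to u$, $v_n\to v$ in $V$, checking that both sides converge (the right-hand side via the sublinear growth $(C3)$, which gives the uniform integrability needed since $\Lambda_{\min}(\gamma)|\nabla u-\nabla v|^2\le\psi'(\gamma)|\nabla u-\nabla v|^2$ and the latter integrates to a difference of energies that is continuous in $V$). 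A secondary technical point is ensuring $a^{-d/2}$ is genuinely integrable for a.e. $\lambda$: from $(C4)$, $\psi'(r)^{-1}\le a+b|r|^s$, while $\big(\psi(r)/r\big)^{-1}$ is handled by $(C5)$-type lower bounds on $\psi(r)/r$; since $s\le 2$ one controls $\int_\Ocal a^{-d/2}$ by $\int_\Ocal(1+|\nabla u|^{sd/2}+|\nabla v|^{sd/2})$, finite whenever $sd/2\le 2$, i.e. $s\le 4/d$, which is precisely what $(C6)$ secures — but for the present lemma one only needs a.e.-finiteness, so even the crude pointwise bound along the segment suffices.
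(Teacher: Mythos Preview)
Your argument is correct and follows essentially the same route as the paper: the chain-rule identity for $\phi(\nabla u)-\phi(\nabla v)$, the Sobolev embedding $W_0^{1,\frac{2d}{d+2}}(\Ocal)\hookrightarrow L^2(\Ocal)$, and H\"older with exponents producing the $-d/2$ power on $\Lambda_{\min}$. The only cosmetic difference is that the paper applies the ellipticity bound \eqref{ellipticity} inside the H\"older step (writing $|\nabla w|^{q}\le\langle D^2\Psi(\gamma)\nabla w,\nabla w\rangle^{q/2}\Lambda_{\min}^{-q/2}$ and keeping the full quadratic form until the end), whereas you first reduce to the weighted integral $\int a|\nabla w|^2$ and then split; the outcome and constants are the same.
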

\begin{proof}
As in the assertion, $\Lambda_{\min}(x):=\psi'(\abs{x})\wedge\frac{\psi(\abs{x})}{\abs{x}}$. First note that (C4) and (C6) guarantee that the expression
on the RHS is well-defined as for $d\lambda$-a.e. $\lambda\in[0,1]$,
\[
\int_{\mathcal{O}}\left(\Lambda_{\min}(\gamma(\lambda,u,v)(x))\right)^{-\frac{d}{2}}\,dx\leq\int_{\mathcal{O}}\left(a+b|\nabla u(x)|^{\frac{ds}{2}}+b|\nabla v(x)|^{\frac{ds}{2}}\right)\,dx<+\infty.
\]
We find the identity
\begin{align*}
 & _{H_{\phantom{0}}^{-1}}\langle A(u)-A(v),u-v\rangle_{H_{0}^{1}}=\int_{\mathcal{O}}\sprod{\psi(\nabla u(x))-\psi(\nabla v(x))}{\nabla u(x)-\nabla v(x)}\,dx\\
= & -\int_{\mathcal{O}}\int_{0}^{1}\frac{d}{d\lambda}\psi(\gamma(\lambda,u,v)(x))(\nabla u(x)-\nabla v(x))\,d\lambda\,dx\\
= & \int_{\mathcal{O}}\int_{0}^{1}\sprod{D^{2}\Psi(\gamma(\lambda,u,v)(x))(\nabla u(x)-\nabla v(x))}{\nabla u(x)-\nabla v(x)}\,d\lambda\,dx,
\end{align*}
where $\Psi$ is as in \eqref{eq:convex-potential}. From the embedding
of $W_{0}^{1,\frac{2d}{d+2}}(\Ocal)$ into $L^{2}(\Ocal)$ and H\"{o}lder's
inequality, we derive 
\begin{align*}
 & \norm{u-v}_{L^{2}}\leq C\norm{u-v}_{W_{0}^{1,\frac{2d}{d+2}}}\\
\leq & C\left(\int_{\mathcal{O}}\sprod{D^{2}\Psi(\gamma(\lambda,u,v)(x))(\nabla u(x)-\nabla v(x))}{\nabla u(x)-\nabla v(x)}^{\frac{d}{d+2}}\Lambda_{\min}(\gamma(\lambda,u,v)(x))^{-\frac{d}{d+2}}\,dx\right)^{\frac{d+2}{2d}}\\
\leq & C\left(\int_{\mathcal{O}}\sprod{D^{2}\Psi(\gamma(\lambda,u,v)(x))(\nabla u(x)-\nabla v(x))}{\nabla u(x)-\nabla v(x)}\,dx\right)^{\frac{1}{2}}\\
 & \times\left(\int_{\mathcal{O}}\Lambda_{\min}(\gamma(\lambda,u,v)(x))^{-\frac{d}{2}}\,dx\right)^{\frac{1}{d}},
\end{align*}
where $C>0$ may change from line to line. Integration with respect
to $\lambda$ yields
\begin{align*}
 & \norm{u-v}_{L^{2}}^{2}\int_{0}^{1}\left(\int_{\mathcal{O}}\Lambda_{\min}(\gamma(\lambda,u,v)(x))^{-\frac{d}{2}}\,dx\right)^{-\frac{2}{d}}\,d\lambda\\
\le & C\int_{0}^{1}\int_{\mathcal{O}}\sprod{D^{2}\Psi(\gamma(\lambda))(\nabla u(x)-\nabla v(x))}{\nabla u(x)-\nabla v(x)}\,dx\,d\lambda
\end{align*}
Rearranging terms yields the result.
\end{proof}

\begin{prop}
\label{thm:decay} Suppose 
that $d\ge1$ with $\Ocal\subset\R^{d}$ bounded, open and convex. Let $u$ and $v$ be solutions to
\eqref{eq:main-spde} with $\phi$ as in \eqref{eq:phi_form} and
let $u_{0},v_{0}\in V=H_{0}^{1}(\Ocal)$. Let $d_{0}:=1\vee\frac{d}{2}$.
For $0\leq\alpha\leq1$, $l\ge1$. and $s\in(0,2]$ as in (C4), we
have that 
\begin{align*}
\lVert u_{t}-v_{t}\rVert_{H}^{2\alpha}\leq t^{-\alpha l}C(l,\alpha,\Ocal,d)\left(\frac{1}{t}\int_{0}^{t}(a|\Ocal|)^{\alpha d_{0}}+b^{\alpha d_{0}}\lVert u_{\tau}\rVert_{V}^{\alpha d_{0}s}+b^{\alpha d_{0}}\lVert v_{\tau}\rVert_{V}^{\alpha d_{0}s}\,d\tau\right)^{\frac{l}{d_{0}}}\lVert u_{0}-v_{0}\rVert_{H}^{2\alpha}.
\end{align*}
\end{prop}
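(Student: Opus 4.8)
The plan is to combine the one-step stability estimates from Lemmas~\ref{lem:decay} and \ref{lemma:stability} with an It\^o (or rather deterministic, since the noise cancels in the difference) energy computation for $w_t := u_t - v_t$, followed by a differential inequality argument. Since $u$ and $v$ solve the same SPDE driven by the same noise, the difference $w_t$ satisfies the \emph{deterministic} equation $\frac{d}{dt}w_t + A(u_t) - A(v_t) = 0$ in $V^\ast$, so the stochastic integrals play no role here and the argument is pathwise. By the chain rule,
\[
\frac{d}{dt}\lVert w_t\rVert_H^2 = -2\,{}_{H^{-1}}\langle A(u_t)-A(v_t), u_t - v_t\rangle_{H_0^1}.
\]

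First I would insert the lower bound from Lemma~\ref{lemma:stability} (for $d\ge2$; Lemma~\ref{lem:decay} handles $d=1$, where $d_0 = 1$ and $\Lambda_{\min} = \psi'$), getting
\[
\frac{d}{dt}\lVert w_t\rVert_H^2 \le -2C\lVert w_t\rVert_H^2 \int_0^1\Big(\int_{\Ocal}\Lambda_{\min}(\gamma(\lambda,u_t,v_t)(x))^{-d/2}\,dx\Big)^{-2/d}d\lambda.
\]
Using (C4) together with (C6) (which ensures $ds/2 \le 2$ so that these powers are integrable, controlled by $\nabla u_t,\nabla v_t \in L^2$), bound the inner integral: $\Lambda_{\min}(z)^{-d/2} \le (a + b|z|^{ds/2}) \le $ (up to a constant depending on $d,s$) $\,a + b(|\nabla u_t|^{ds/2} + |\nabla v_t|^{ds/2})$. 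Hence, writing $d_0 = 1 \vee \tfrac d2$ so that $\tfrac d2 = d_0$ in the regime $d\ge2$ (and $d_0 = 1$ for $d=1$), one arrives at a bound of the shape
\[
\frac{d}{dt}\lVert w_t\rVert_H^2 \le -\,c\,\lVert w_t\rVert_H^2\,\big(a|\Ocal| + b\lVert u_t\rVert_V^s + b\lVert v_t\rVert_V^s\big)^{-1/d_0}
\]
after applying Jensen/H\"older to pull the $\lambda$-integral and the exponent $-2/d = -1/d_0$ out (convexity of $r\mapsto r^{-1/d_0}$).

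Next, set $g(t) := \big(a|\Ocal| + b\lVert u_t\rVert_V^s + b\lVert v_t\rVert_V^s\big)^{-1/d_0}$, a nonnegative function, so that $\frac{d}{dt}\log\lVert w_t\rVert_H^2 \le -c\,g(t)$, i.e.
\[
\lVert w_t\rVert_H^2 \le \lVert w_0\rVert_H^2 \exp\Big(-c\int_0^t g(\tau)\,d\tau\Big).
\]
Then I would use the elementary inequality $e^{-x} \le C(l,\alpha)\,x^{-\alpha l}$ for $x>0$ (valid since $x^{\alpha l}e^{-x}$ is bounded on $(0,\infty)$), raise to the power $\alpha$, and finally apply Jensen's inequality to $\int_0^t g$: since $r\mapsto r^{-\alpha l}$ composed appropriately and $g = (\cdots)^{-1/d_0}$, one has by Jensen (convexity of $t\mapsto t^{-\alpha l d_0}$ applied to the average $\frac1t\int_0^t(a|\Ocal| + b\lVert u_\tau\rVert_V^s + b\lVert v_\tau\rVert_V^s)\,d\tau$, noting $\alpha l d_0 \ge 0$)
\[
\Big(\int_0^t g(\tau)\,d\tau\Big)^{-\alpha l} = t^{-\alpha l}\Big(\frac1t\int_0^t g(\tau)\,d\tau\Big)^{-\alpha l} \le t^{-\alpha l}\Big(\frac1t\int_0^t\big(a|\Ocal| + b\lVert u_\tau\rVert_V^s + b\lVert v_\tau\rVert_V^s\big)\,d\tau\Big)^{\alpha l/d_0},
\]
using $\big(\frac1t\int g\big)^{-1} \le \big(\text{average of }g^{-d_0}\big)^{1/d_0}$ by the power-mean inequality with exponent $-d_0 < 0$. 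Combining, and crudely bounding $(a|\Ocal| + b\lVert u_\tau\rVert_V^s + b\lVert v_\tau\rVert_V^s)^{\alpha l/d_0}$ by a constant times $(a|\Ocal|)^{\alpha d_0} + b^{\alpha d_0}\lVert u_\tau\rVert_V^{\alpha d_0 s} + b^{\alpha d_0}\lVert v_\tau\rVert_V^{\alpha d_0 s}$ inside the time-average via convexity (here the exponent bookkeeping must be handled carefully — this is where the $l$ versus $d_0$ versus $\alpha$ juggling gets delicate), yields exactly the claimed estimate.

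The main obstacle is the exponent bookkeeping: matching the power $\frac{l}{d_0}$ on the outer bracket, the powers $\alpha d_0$, $\alpha d_0 s$ inside the time-integral, and the factor $t^{-\alpha l}$ requires applying Jensen's inequality twice with the right convexity directions (once with a negative exponent $-d_0$ to invert the average of $g^{-d_0}$, once with a positive exponent $\ge1$ to bring the power inside the $\tau$-integral) and absorbing constants into $C(l,\alpha,\Ocal,d)$. One must also be careful that the integrand $g$ is genuinely integrable — this is precisely where condition (C6), $0 < s \le \tfrac4d \wedge 2$, is used, since it guarantees $\tfrac{ds}{2} \le 2$ so that $|\nabla u_\tau|^{ds/2}, |\nabla v_\tau|^{ds/2}$ are $L^1$ given $u_\tau, v_\tau \in V = H_0^1(\Ocal)$. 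The case $d=1$ is genuinely separate (using Lemma~\ref{lem:decay} instead) but follows the same scheme with $d_0 = 1$.
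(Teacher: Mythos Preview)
Your approach is essentially the same as the paper's: exploit that the difference $w_t=u_t-v_t$ solves a deterministic equation, plug in Lemmas~\ref{lem:decay}/\ref{lemma:stability}, apply Gronwall, use $e^{-x}\le C x^{-l}$, and finish with Jensen-type inequalities. The ingredients and the overall architecture match.

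The one place where your execution diverges from the paper---and where a genuine gap remains---is the order in which (C4) and the Jensen steps are applied. You substitute (C4) \emph{before} Gronwall and collapse everything to a single scalar $g(t)=h(t)^{-1/d_0}$, then try to recover the precise exponents at the end. But your power-mean step yields
\[
\Big(\tfrac1t\textstyle\int_0^t g\Big)^{-\alpha l}\le\Big(\tfrac1t\textstyle\int_0^t h\Big)^{\alpha l/d_0},
\]
whereas the statement demands $\bigl(\tfrac1t\int_0^t h^{\alpha d_0}\bigr)^{l/d_0}$ (up to the splitting of $h^{\alpha d_0}$ into the three summands). The comparison $(\tfrac1t\int h)^{\alpha}\le C(\tfrac1t\int h^{\alpha d_0})^{1/d_0}$ that you would need is a power-mean inequality that goes the \emph{wrong} way when $\alpha d_0<1$. (Also, a minor slip: after integrating $\Lambda_{\min}^{-d_0}\le (a+b|\gamma|^s)^{d_0}$ over $\Ocal$ you pick up $\lVert u\rVert_V^{d_0 s}$, not $\lVert u\rVert_V^s$, in the definition of $h$.)

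The paper avoids this by keeping the full $\lambda$- and $x$-integrals through Gronwall and applying Jensen \emph{three} times in a specific order: first move the exponent $\alpha$ inside the time average (concavity of $x^\alpha$), then handle the $\lambda$-integral via convexity of $x^{-1/d_0}$, and finally compare the power means $M_{-\alpha/d_0}$ and $M_\alpha$ on the time variable. Only then is (C4) invoked, together with the subadditivity $(x+y+z)^\alpha\le x^\alpha+y^\alpha+z^\alpha$, to arrive at the precise integrand $(a|\Ocal|)^{\alpha d_0}+b^{\alpha d_0}\lVert u_\tau\rVert_V^{\alpha d_0 s}+b^{\alpha d_0}\lVert v_\tau\rVert_V^{\alpha d_0 s}$ with outer exponent $l/d_0$. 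This ordering is exactly the ``delicate exponent bookkeeping'' you flagged; following it resolves your final step.
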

\begin{proof}
Starting from Lemmas \ref{lem:decay} and \ref{lemma:stability},
where $\gamma(\lambda,u,v)(x)=\nabla u(x)+\lambda(\nabla v(x)-\nabla u(x))$,
\[
\frac{1}{2}\frac{d}{dt}\lVert u_{t}-v_{t}\rVert_{H}^{2}\leq-C\lVert u_{t}-v_{t}\rVert_{H}^{2}\int_{0}^{1}\left(\int_{\Ocal}\left(\psi'(\gamma(\lambda,u_{t},v_{t})(x))\right)^{-d_{0}}\,dx\right)^{-\frac{1}{d_{0}}}\,d\lambda,
\]
where in the $d=1$ case, $C=L^{-1}$. Gronwall's Lemma yields 
\[
\lVert u_{t}-v_{t}\rVert_{H}^{2}\leq\lVert u_{0}-v_{0}\rVert_{H}^{2}\exp\left(-2C\int_{0}^{t}\int_{0}^{1}\left(\int_{\Ocal}\left(\psi'(\gamma(\lambda,u_{\tau},v_{\tau})(x))\right)^{-d_{0}}\,dx\right)^{-\frac{1}{d_{0}}}\,d\lambda\,d\tau\right).
\]
For $l\geq1$, $x\geq0$, the elementary estimate $\log(\frac{x}{l})\le\frac{x}{l}-1$
implies 
\[
e^{x}\geq c_{l}x^{l}
\]
with $c_{l}:=\left(\frac{e}{l}\right)^{l}$. Applying Jensen's inequality
thrice, we get by (C4) that
\begin{align*}
 & \lVert u_{0}-v_{0}\rVert_{H}^{2\alpha}\geq c_{l}^{\alpha}\left(\int_{0}^{t}2C\int_{0}^{1}\left(\int_{\Ocal}\left(\psi'(\gamma(\lambda,u_{\tau},v_{\tau})(x))\right)^{-d_{0}}\,dx\right)^{-\frac{1}{d_{0}}}\,d\lambda\,d\tau\right)^{\alpha l}\lVert u_{t}-v_{t}\rVert_{H}^{2\alpha}\\
\geq & \ c_{l}^{\alpha}t^{\alpha l}\left(\frac{1}{t}\int_{0}^{t}\left(2C\int_{0}^{1}\left(\int_{\Ocal}\left(\psi'(\gamma(\lambda,u_{\tau},v_{\tau})(x))\right)^{-d_{0}}\,dx\right)^{-\frac{1}{d_{0}}}\,d\lambda\right)^{\alpha}\,d\tau\right)^{l}\lVert u_{t}-v_{t}\rVert_{H}^{2\alpha}\\
= & \ c_{l}^{\alpha}(2C)^{\alpha l}t^{\alpha l}\left(\frac{1}{t}\int_{0}^{t}\left(\int_{0}^{1}\left(\int_{\Ocal}\left(\psi'(\gamma(\lambda,u_{\tau},v_{\tau})(x))\right)^{-d_{0}}\,dx\right)^{-\frac{1}{d_{0}}}\,d\lambda\right)^{\alpha}\,d\tau\right)^{l}\lVert u_{t}-v_{t}\rVert_{H}^{2\alpha}.\\
\geq & c_{l}^{\alpha}(2C)^{\alpha l}t^{\alpha l}\left(\frac{1}{t}\int_{0}^{t}\left(\int_{0}^{1}\int_{\Ocal}\left(\psi'(\gamma(\lambda,u_{\tau},v_{\tau})(x))\right)^{-d_{0}}dx\,d\lambda\right)^{\alpha}\,d\tau\right)^{-\frac{l}{d_{0}}}\lVert u_{t}-v_{t}\rVert_{H}^{2\alpha}\\
\geq & c_{l}^{\alpha}(2C)^{\alpha l}t^{\alpha l}\frac{\lVert u_{t}-v_{t}\rVert_{H}^{2\alpha}}{\left(\frac{1}{t}\int_{0}^{t}\left((a|\Ocal|)^{\alpha d_{0}}+b^{\alpha d_{0}}\lVert u_{\tau}\rVert_{V}^{\alpha d_{0}s}+b^{\alpha d_{0}}\lVert v_{\tau}\rVert_{V}^{\alpha d_{0}s}\right)\,d\tau\right)^{\frac{l}{d_{0}}}}.
\end{align*}
\end{proof}

\section{\label{sec:Moment-estimates}Moment estimates}

Consider equation \eqref{eq:main_Ito_SPDE} with the same assumptions as stated in the beginning of Section \ref{sec:Stability}, in other words, we assume (C1)--(C6) and (B1).

Let us start with recording a second order functional inequality for the one-dimensional case that we will utilize later.
Let $d=1$. Let $\Ocal=(0,L)\subset\R$, for some $L>0$. Set $I:=\overline{\Ocal}=[0,L]$.
Let $\Delta=\partial_{xx}$ be the Dirichlet Laplace on $I$. Let
us equip $W_{0}^{1,1}(I)$ with the norm $\|u\|_{W_{0}^{1,1}(I)}=\int_{I}|\partial_{x}u|\,dx.$

\begin{lem}[compare with Lemma 2.2. in \cite{ESvRS}]
\label{lem:a-priori} For $u\in C_{0}^{\infty}(I)$ and $a\ge 0$, $b>0$ and $s\in(0,2]$
as in Assumption (C4), there exists constants $C(L,a,s)>0$ and $C(L,b,s)>0$
such that
\[
\left(\int_{I}|\partial_{xx}u|\,dx\right)^{s^{\ast}}\le s^{\ast}\int_{I}\psi^{\prime}(\partial_{x}u)|\partial_{xx}u|^{2}\,dx+C(L,b,s)\|u\|_{W_{0}^{1,1}(I)}^{1\vee(2-s)}+C(L,a,s),
\]
where
\begin{equation}
s^{\ast}:=(2-s)\vee\frac{4-s}{2+s}.\label{eq:s^ast}
\end{equation}
\end{lem}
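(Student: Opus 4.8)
The plan is to reduce the claimed second-order inequality to a one-dimensional interpolation/Gagliardo--Nirenberg estimate for $w:=\partial_x u$, using (C4) to absorb the weight $\psi'(\partial_x u)$. First I would set $w=\partial_x u\in C_0^\infty(I)$ — note $w$ need not vanish at the endpoints, but $u\in C_0^\infty(I)$ gives $\int_I w\,dx=0$, so $w$ attains the value zero somewhere in $I$. The key pointwise observation from (C4) is that $\psi'(r)\ge (a+b|r|^s)^{-1}$ for every $r\ge 0$ (and by (C2), $\psi'$ is even, so the same holds for all $r$), hence
\[
\int_I \psi'(\partial_x u)\,|\partial_{xx}u|^2\,dx \ge \int_I \frac{|\partial_x w|^2}{a+b|w|^s}\,dx.
\]
So it suffices to bound $\bigl(\int_I|\partial_x w|\,dx\bigr)^{s^\ast}$ in terms of $\int_I \frac{|\partial_x w|^2}{a+b|w|^s}\,dx$ plus lower-order terms in $\|w\|_{L^1}=\|u\|_{W_0^{1,1}(I)}$.

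The core step is the estimate, via Cauchy--Schwarz,
\[
\int_I |\partial_x w|\,dx = \int_I \frac{|\partial_x w|}{(a+b|w|^s)^{1/2}}\,(a+b|w|^s)^{1/2}\,dx
\le \left(\int_I \frac{|\partial_x w|^2}{a+b|w|^s}\,dx\right)^{1/2}\left(\int_I (a+b|w|^s)\,dx\right)^{1/2},
\]
so that $\bigl(\int_I|\partial_x w|\,dx\bigr)^2 \le \bigl(\int_I \psi'(\partial_x u)|\partial_{xx}u|^2\,dx\bigr)\cdot\bigl(a|I| + b\int_I|w|^s\,dx\bigr)$. The remaining task is to control $\int_I|w|^s\,dx$ by $\|w\|_{L^1}$ and (a small multiple of) $\int_I|\partial_x w|\,dx$. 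Since $w$ vanishes somewhere on $I$, one has the $L^\infty$ bound $\|w\|_{L^\infty(I)}\le \int_I|\partial_x w|\,dx$, and interpolating $L^s$ between $L^1$ and $L^\infty$ (for $s\ge 1$) gives $\int_I|w|^s\,dx\le \|w\|_{L^\infty}^{s-1}\|w\|_{L^1}\le \bigl(\int_I|\partial_x w|\,dx\bigr)^{s-1}\|w\|_{L^1}$; for $s\in(0,1)$ one uses $\int_I |w|^s\,dx \le |I|^{1-s}\|w\|_{L^1}^s$ by Jensen. Plugging these into the displayed product and writing $N:=\int_I|\partial_x w|\,dx$, $M:=\|w\|_{L^1}=\|u\|_{W_0^{1,1}(I)}$, $J:=\int_I\psi'(\partial_x u)|\partial_{xx}u|^2\,dx$, one arrives (in the case $s\ge 1$) at
\[
N^2 \le J\,\bigl(a|I| + b\,N^{s-1}M\bigr),
\]
i.e. $N^2 \le a|I|\,J + b\,M\,J\,N^{s-1}$. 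Now I would apply Young's inequality to the term $J\,N^{s-1}$: since $s\in[1,2]$, write $N^{s-1}$ against a suitable power of $N^2$ to produce $\tfrac12 N^{s^\ast_{?}}$ — the exponent bookkeeping is exactly what forces $s^\ast=\frac{4-s}{2+s}$ in this regime. Concretely, $N^2 \le a|I|J + bMJ N^{s-1}$; solving the quadratic-type inequality, one multiplies through and uses Young with exponents tuned so that the $N$-power on the left becomes $\frac{4-s}{2+s}$ after dividing by the appropriate power of $J$; in the regime $s\in(0,1)$ one instead gets $N^2\le a|I|J + b|I|^{1-s}M^s J$, which upon applying $x^{1/2}\le \tfrac{1}{s^\ast}\cdot\ldots$ type estimates yields the exponent $2-s$. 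The two cases are combined by taking $s^\ast=(2-s)\vee\frac{4-s}{2+s}$, and the constant $s^\ast$ in front of $J$ (rather than an unspecified constant) comes from being careful with the Young-inequality constant in the dominant case.

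The main obstacle I anticipate is the exponent arithmetic: verifying that the Young-inequality splitting of $J\,N^{s-1}$ (resp. the handling of $J^{1/2}$ against $M^{s/2}$ when $s<1$) produces precisely the exponent $s^\ast=(2-s)\vee\frac{4-s}{2+s}$ on $N$ and a power $1\vee(2-s)$ on $M$, uniformly and with the stated dependence of the constants on $L,a,b,s$ only. A secondary technical point is justifying $\|w\|_{L^\infty(I)}\le \int_I|\partial_x w|\,dx$ rigorously — this rests on $\int_I w\,dx=0$ (so $w$ has a zero in $I$) together with the fundamental theorem of calculus, which is fine for $w=\partial_x u$, $u\in C_0^\infty(I)$. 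Modulo these, every step is Cauchy--Schwarz, Young, Jensen and elementary $L^p$-interpolation on a bounded interval.
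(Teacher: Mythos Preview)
Your strategy is correct and tracks the paper's proof closely: both begin with Cauchy--Schwarz/H\"older to obtain
\[
N^{2}\le J\cdot\Bigl(aL+b\int_I|\partial_x u|^{s}\,dx\Bigr),\qquad N:=\int_I|\partial_{xx}u|\,dx,\quad J:=\int_I\psi'(\partial_x u)|\partial_{xx}u|^{2}\,dx,
\]
and then control $\int_I|\partial_x u|^{s}\,dx$ in terms of $M:=\|u\|_{W_0^{1,1}}$ and $N$, closing with Young's inequality. The genuine difference lies in that control step. You invoke the zero of $w=\partial_x u$ to get $\|w\|_{L^\infty}\le N$ and then interpolate $L^s$ between $L^1$ and $L^\infty$ (splitting into the cases $s<1$ and $s\ge 1$). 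The paper instead uses the integration-by-parts identity $\int_I|\partial_x u|^{2}\,dx=-\int_I u\,\partial_{xx}u\,dx\le\|u\|_{L^\infty}\,N\le M\,N$ together with Jensen, yielding $\int_I|\partial_x u|^{s}\,dx\le C(L)(MN)^{s/2}$ uniformly in $s\in(0,2]$. This single estimate, after a second application of Young, produces exactly the exponent $\frac{\alpha s}{4-2\alpha-s}$ on $M$, and the choice $\alpha=s^\ast$ makes this equal $1\vee(2-s)$ on the nose while simultaneously fixing the coefficient of $J$ to be precisely $s^\ast$. By contrast, your route for $s\in(1,2)$ yields a strictly smaller exponent $\frac{4-s}{2+2s-s^{2}}<1$ on $M$ (which still implies the stated bound via $M^\gamma\le M+1$), and does not naturally isolate the constant $s^\ast$ in front of $J$; you would need an $\varepsilon$-version of Young's inequality to force that constant. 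So your argument goes through, but the paper's integration-by-parts trick buys a cleaner, case-free computation with the sharp constants appearing automatically.
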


\begin{proof}
Let $u\in C_{0}^{\infty}(I)$. Let $\alpha\in[\frac{1}{2},2-\frac{s}{2})$.
By H\"{o}lder inequality, Young inequality and (C4),
\[
\begin{aligned} & \left(\int_{I}|\partial_{xx}u|\,dx\right)^{\alpha}\\
\le & \left(\int_{I}\psi'(\partial_{x}u)|\partial_{xx}u|^{2}\,dx\right)^{\frac{\alpha}{2}}\left(\int_{I}\frac{1}{\psi'(\partial_{x}u)}\,dx\right)^{\frac{\alpha}{2}}\\
\le & \frac{\alpha}{2}\int_{I}\psi'(\partial_{x}u)|\partial_{xx}u|^{2}\,dx+\frac{2-\alpha}{2}\left(\int_{I}\left(a+b|\partial_{x}u|^{s}\right)\,dx\right)^{\frac{\alpha}{2-\alpha}}\\
\le & \frac{\alpha}{2}\int_{I}\psi'(\partial_{x}u)|\partial_{xx}u|^{2}\,dx+2^{\frac{2\alpha-2}{2-\alpha}\vee0}\left[\frac{2-\alpha}{2}(aL)^{\frac{\alpha}{2-\alpha}}+b^{\frac{\alpha}{2-\alpha}}\frac{2-\alpha}{2}\left(\int_{I}|\partial_{x}u|^{s}\,dx\right)^{\frac{\alpha}{2-\alpha}}\right].
\end{aligned}
\]
Furthermore, by Jensen's inequality and the embedding $W_{0}^{1,1}(I)\hookrightarrow C(I)$
(note that $u(0)=0$),
\[
\begin{aligned} & L^{\frac{2-s}{2}}\left(\int_{I}|\partial_{x}u|^{s}\,dx\right)^{\frac{2}{s}}\\
\le & \int_{I}|\partial_{x}u|^{2}\,dx=-\int_{I}u\partial_{xx}u\,dx\\
\le & \|\partial_{x}u\|_{L^{1}(I)}\int_{I}|\partial_{xx}u|\,dx.
\end{aligned}
\le\|u\|_{\infty}\int_{I}|\partial_{xx}u|\,dx
\]
Since $\alpha<2-\frac{s}{2}$, we can apply Young's inequality again
and get altogether,
\[
\begin{aligned} & 2\left(\int_{I}|\partial_{xx}u|\,dx\right)^{\alpha}\\
\le & \alpha\int_{I}\psi'(\partial_{x}u)|\partial_{xx}u|^{2}\,dx+C(L,b,\alpha,s)\left(\int_{I}|\partial_{x}u|^{s}\,dx\right)^{\frac{\alpha}{2-\alpha}}+C(L,a,\alpha)\\
\le & \alpha\int_{I}\psi'(\partial_{x}u)|\partial_{xx}u|^{2}\,dx+C(L,b,\alpha,s)\left(\int_{I}|\partial_{xx}u|\,dx\right)^{\frac{\alpha s}{4-2\alpha}}\|\partial_{x}u\|_{L^{1}(I)}^{\frac{\alpha s}{4-2\alpha}}+C(L,a,\alpha)\\
\le & \alpha\int_{I}\psi'(\partial_{x}u)|\partial_{xx}u|^{2}\,dx+\left(\int_{I}|\partial_{xx}u|\,dx\right)^{\alpha}+C(L,b,\alpha,s)\|\partial_{x}u\|_{L^{1}(I)}^{\frac{\alpha s}{4-2\alpha-s}}+C(L,a,\alpha).
\end{aligned}
\]
Choosing $\alpha=s^{\ast}=(2-s)\vee\frac{4-s}{2+s}$ yields the proof.
\end{proof}

Now, consider the general multivariate case $d\ge 1$. Let $\Ocal\subset\R^{d}$ be a bounded, open and convex domain with piecewise
$C^{2}$-boundary. To achieve the main result, we first show a pathwise regularity result
for \eqref{eq:main_Ito_SPDE}.
This result combined with moment estimates for the finite dimensional
Galerkin approximations of \eqref{eq:main_Ito_SPDE} yields the existence
of a unique invariant measure for \eqref{eq:main_Ito_SPDE}. 

\begin{lem}\label{lem_moments} For every $0<s\le2$, and every $1<\alpha\leq d\frac{2-s}{d-s}$,
there exists $C=C(d,s,\Ocal)>0$ such that for every $u\in C_{0}^{\infty}(\mathcal{O})$, 
\[
\left(\int_{\mathcal{O}}\sum_{i=1}^{d}\sum_{i=1}^{d}\lvert\partial_{x_{i}}\partial_{x_{j}}u\rvert^{\alpha}\,dx\right)^{\frac{s^{\ast}}{\alpha}}\leq C\left(1+\sum_{i=1}^{d}\int_{\mathcal{O}}\sprod{D^{2}\Psi(\nabla u)\nabla\partial_{x_{i}}u}{\nabla\partial_{x_{i}}u}\,dx\right),
\]
where $s^{\ast}$ is as in \eqref{eq:s^ast} and $\Psi$ is defined as in \eqref{eq:convex-potential}.
\end{lem}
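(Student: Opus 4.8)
The plan is to reduce the multivariate estimate to a Gagliardo–Nirenberg / Sobolev-type inequality controlling $\|D^2 u\|_{L^\alpha}$ by a weighted Dirichlet-type energy, in parallel with the one-dimensional Lemma~\ref{lem:a-priori}. The right-hand side $\sum_i\int_{\Ocal}\sprod{D^2\Psi(\nabla u)\nabla\partial_{x_i}u}{\nabla\partial_{x_i}u}\,dx$ is, by the ellipticity bound \eqref{ellipticity}, at least $\sum_i\int_{\Ocal}\Lambda_{\min}(\nabla u)\,|\nabla\partial_{x_i}u|^2\,dx$, i.e.\ a $\Lambda_{\min}$-weighted $L^2$-norm of the full Hessian. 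So the first step is to write, for each pair $(i,j)$,
\[
\int_{\Ocal}|\partial_{x_i}\partial_{x_j}u|^\alpha\,dx
=\int_{\Ocal}\bigl(\Lambda_{\min}(\nabla u)|\partial_{x_i}\partial_{x_j}u|^2\bigr)^{\alpha/2}\,\Lambda_{\min}(\nabla u)^{-\alpha/2}\,dx,
\]
and apply Hölder with exponents $2/\alpha$ and $2/(2-\alpha)$. This bounds $\int_{\Ocal}|D^2u|^\alpha$ by $\bigl(\text{weighted energy}\bigr)^{\alpha/2}$ times $\bigl(\int_{\Ocal}\Lambda_{\min}(\nabla u)^{-\alpha/(2-\alpha)}\,dx\bigr)^{(2-\alpha)/2}$. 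Now (C4) gives $\Lambda_{\min}(\nabla u)^{-1}\le a+b|\nabla u|^s$ (using $\psi(r)/r\ge\psi'$-type comparisons guaranteed by monotonicity of $\psi$, or directly (C4) applied to both eigenvalues), so the weight integral is controlled by $\int_{\Ocal}(a+b|\nabla u|^s)^{\alpha/(2-\alpha)}\,dx$; the constraint $\alpha\le d\frac{2-s}{d-s}$ is exactly what makes the exponent $\frac{\alpha s}{2-\alpha}$ no larger than $\frac{2d}{d-2}\cdot\frac{d-2}{d}=\ldots$, i.e.\ keeps $|\nabla u|$ in a subcritical Sobolev range $W^{2,\alpha}\hookrightarrow W^{1,q}$.

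The second step is to absorb the $\nabla u$-term. By the Sobolev embedding $W^{2,\alpha}_0(\Ocal)\hookrightarrow W^{1,q}(\Ocal)$ with $\frac1q=\frac1\alpha-\frac1d$ (valid since $\alpha<d$ in the relevant regime), and by the admissibility condition on $\alpha$, the quantity $\int_{\Ocal}|\nabla u|^{\alpha s/(2-\alpha)}\,dx$ is dominated by a power of $\|u\|_{W^{2,\alpha}_0}$ strictly less than $s^\ast$ (after also invoking the interpolation/Poincaré inequality to trade a $W^{1,1}$ or $L^2$ norm of $\nabla u$ against the Hessian, exactly as in the one-dimensional proof where $\int|\partial_x u|^2=-\int u\,\partial_{xx}u\le\|u\|_\infty\int|\partial_{xx}u|$). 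Then a repeated application of Young's inequality — first to split the Hölder product into $\tfrac{\alpha}{2}(\text{energy})+\tfrac{2-\alpha}{2}(\text{weight})^{\alpha/(2-\alpha)}$, then to absorb the sublinear-in-$\|D^2u\|_{L^\alpha}$ terms back into the left-hand side — yields the claimed inequality with the power $s^\ast$ on the left. Choosing the interpolation exponents so that the surviving exponent is precisely $s^\ast=(2-s)\vee\frac{4-s}{2+s}$ is done exactly as in Lemma~\ref{lem:a-priori}: the two branches of the maximum come from whether $2-s$ or $\frac{4-s}{2+s}$ is the binding constraint in the Young splitting.

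The main obstacle I anticipate is the bookkeeping of exponents to verify that the condition $1<\alpha\le d\frac{2-s}{d-s}$ (together with (C6), $s\le 4/d\wedge 2$) simultaneously (a) keeps the weight integral $\int(a+b|\nabla u|^s)^{\alpha/(2-\alpha)}$ finite and Sobolev-subcritical, and (b) makes the final power on $\|u\|_{W^{2,\alpha}_0}$ come out to exactly $s^\ast$ after the absorption, rather than something larger that cannot be absorbed. A secondary technical point is handling $d=2$ (where $W^{2,\alpha}_0\hookrightarrow W^{1,q}$ for all $q<\infty$, so the borderline $\alpha=d\frac{2-s}{d-s}$ needs care) and the reduction from $C^\infty_0$ to the class of functions actually arising as Galerkin approximants — but since the statement is only for $u\in C^\infty_0(\Ocal)$ this is deferred. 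I would also need to confirm that the mixed second derivatives $\partial_{x_i}\partial_{x_j}u$ with $i\ne j$ are all controlled, not just the pure ones $\partial_{x_i}^2u$; this follows because $\sum_i\int\sprod{D^2\Psi(\nabla u)\nabla\partial_{x_i}u}{\nabla\partial_{x_i}u}\ge\sum_i\int\Lambda_{\min}(\nabla u)|\nabla\partial_{x_i}u|^2=\int\Lambda_{\min}(\nabla u)\sum_{i,j}|\partial_{x_i}\partial_{x_j}u|^2$, so the weighted energy already sees the entire Hessian.
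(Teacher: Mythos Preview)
Your proposal is correct and follows essentially the paper's argument: H\"older splitting with weight $\Lambda_{\min}$, Young's inequality with a parameter $\varepsilon$, (C4) to control the weight integral, the Sobolev embedding $W_0^{2,\alpha}(\Ocal)\hookrightarrow W_0^{1,\frac{s\alpha}{2-\alpha}}(\Ocal)$ (valid precisely under the constraint $\alpha\le d\frac{2-s}{d-s}$), and absorption by choosing $\varepsilon$ small. One simplification over your sketch: the paper's multivariate proof does \emph{not} use the intermediate ``trade $\int|\nabla u|^2$ against the Hessian via $\|u\|_\infty$'' step you import from the one-dimensional Lemma~\ref{lem:a-priori}; the Sobolev embedding already bounds $\int_\Ocal|\nabla u|^{s\alpha/(2-\alpha)}\,dx$ directly by a power of $\|u\|_{W_0^{2,\alpha}}$, so no further interpolation is required, and the paper works with exponent $\frac{2-s}{\alpha}$ on the left-hand side throughout rather than distinguishing the two branches of $s^\ast$.
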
 \begin{proof} Set $\Lambda_{\min}(x):=\psi'(\abs{x})\wedge\frac{\psi(\abs{x})}{\abs{x}}$.
Let $1<\alpha\leq d\frac{2-s}{d-s}$. First, by using H\"{o}lder's and Young's inequalities,
we get for $\varepsilon>0$,
\begin{align*}
 & \left(\int_{\mathcal{O}}\sum_{i=1}^{d}\sum_{j=1}^{d}\lvert\partial_{x_{i}}\partial_{x_{j}}u\rvert^{\alpha}dx\right)^{\frac{2-s}{\alpha}}\ \\
\le & C\left(\int_{\mathcal{O}}\sum_{i=1}^{d}\sprod{D^{2}\Psi(\nabla u)\nabla\partial_{x_{i}}u}{\nabla\partial_{x_{i}}u}^{\frac{\alpha}{2}}\cdot\Lambda_{\min}^{-\frac{\alpha}{2}}(\nabla u)\,dx\right)^{\frac{2-s}{\alpha}}\\
\le & C\left(\int_{\mathcal{O}}\sum_{i=1}^{d}\sprod{D^{2}\Psi(\nabla u)\nabla\partial_{x_{i}}u}{\nabla\partial_{x_{i}}u}\,dx\right)^{\frac{2-s}{2}}\times\left(\int_{\mathcal{O}}\Lambda_{\min}^{-\frac{\alpha}{2-\alpha}}(\nabla u)\,dx\right)^{\frac{(2-s)(2-\alpha)}{2\alpha}}\\
\le & C\left[\frac{2-s}{2}\varepsilon^{-\frac{2}{2-s}}\int_{\mathcal{O}}\sum_{i=1}^{d}\sprod{D^{2}\Psi(\nabla u)\nabla\partial_{x_{i}}u}{\nabla\partial_{x_{i}}u}\,dx+\frac{s}{2}\varepsilon^{\frac{2}{s}}\left(\int_{\mathcal{O}}\Lambda_{\min}^{-\frac{\alpha}{2-\alpha}}(\nabla u)\,dx\right)^{\frac{(2-s)(2-\alpha)}{s\alpha}}\right].
\end{align*}
We note that $W_{0}^{2,\alpha}(\Ocal)$ is continuously embedded into
$W_{0}^{1,\frac{s\alpha}{2-\alpha}}(\Ocal)$ for $1<\alpha\leq d\frac{2-s}{d-s}$. This embedding, together
with (C4), yields 
\[
\begin{aligned}\left(\int_{\mathcal{O}}\Lambda_{\min}^{-\frac{\alpha}{2-\alpha}}(\nabla u)\,dx\right)^{\frac{(2-s)(2-\alpha)}{s\alpha}}\le C\left(1+\int_{\mathcal{O}}|\nabla u|^{\frac{s\alpha}{2-\alpha}}\,dx\right)^{\frac{(2-s)(2-\alpha)}{s\alpha}}\le C\left(1+\norm{u}_{W_{0}^{2,d\frac{2-s}{d-s}}(\Ocal)}^{s^{\ast}}\right).\end{aligned}
\]
Choosing $\varepsilon$ small enough completes the proof. \end{proof}

We shall prove the existence of the unique invariant measure for the ergodic semigroup and the main concentration result now.

\begin{thm}\label{support} There exists a unique invariant Borel probability
measure $\mu$ for the semigroup $\{P_t\}$ of equation \eqref{eq:main_Ito_SPDE} that is concentrated on the subset $W^{2,\alpha}_0(\Ocal)\cap W^{1,2}_0(\Ocal)\subset L^2(\Ocal)$ for any $1<\alpha\leq d\frac{2-s}{d-s}$ if $d\ge 2$ and $\alpha=1$ if $d=1$ with
\[
\int_{H}\|u\|_{W_{0}^{2,\alpha}(\Ocal)}^{s^{\ast}}\,\mu(du)+\int_{H}\lVert u\rVert_{V}^{s^{\ast}}\,\mu(du)+\sum_{i=1}^d \int_{H} \langle D^2 \Psi \left( \nabla u\right) \partial_{x_i} 
\nabla u , \partial_{x_i}\nabla u \rangle_H\,\mu(du) <+\infty,
\]
where $s^{\ast}$ is as in \eqref{eq:s^ast} and $s\in(0,2]$ is as
in assumption (C4) and $\Psi$ is defined as in \eqref{eq:convex-potential}.\end{thm}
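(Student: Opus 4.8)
The plan is to obtain the invariant measure by the Krylov--Bogoliubov procedure applied to the Galerkin approximations of \eqref{eq:main_Ito_SPDE}, and then to upgrade the resulting tightness to the claimed concentration and integrability statements via the second-order estimates recorded in Lemma \ref{lem:a-priori} (for $d=1$) and Lemma \ref{lem_moments} (for $d\ge2$). First I would fix a spectral Galerkin basis of eigenfunctions of the Dirichlet Laplacian, write down the finite-dimensional SDEs for $X^N_t$, and apply It\^o's formula to the functional $u\mapsto \sum_{i=1}^d\|\partial_{x_i}\nabla u\|_H^2$ (equivalently $\|\Delta u\|_H^2$, using that $\Ocal$ is convex with piecewise $C^2$-boundary so that the $H^2$-norm is controlled by $\|\Delta\cdot\|_H$), which is the natural Lyapunov functional here: its drift contribution is exactly $-\sum_i\int_\Ocal\langle D^2\Psi(\nabla X^N)\nabla\partial_{x_i}X^N,\nabla\partial_{x_i}X^N\rangle\,dx\le 0$ plus a bounded It\^o-correction term controlled by $\|B\|_{L_2(U,V)}^2$ (this is where (B1) enters). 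Taking expectations, integrating in time, and dividing by $T$ gives a uniform-in-$N$ bound on $\frac1T\int_0^T\E\sum_i\langle D^2\Psi(\nabla X^N_\tau)\nabla\partial_{x_i}X^N_\tau,\nabla\partial_{x_i}X^N_\tau\rangle\,d\tau$.

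Next I would feed this into the functional inequalities: Lemma \ref{lem_moments} (resp.\ Lemma \ref{lem:a-priori} when $d=1$) converts the Dirichlet-form quantity into a bound on $\E\|X^N_\tau\|_{W_0^{2,\alpha}}^{s^\ast}$ for the admissible range $1<\alpha\le d\frac{2-s}{d-s}$ ($\alpha=1$ if $d=1$), and the Sobolev embedding $W_0^{2,\alpha}\hookrightarrow V=H_0^1$ simultaneously bounds $\E\|X^N_\tau\|_V^{s^\ast}$. Thus the time-averaged laws $\mu^N_T:=\frac1T\int_0^T \mathrm{Law}(X^N_\tau)\,d\tau$ satisfy, uniformly in $N$ and $T$, $\int \|u\|_{W_0^{2,\alpha}}^{s^\ast}\,d\mu^N_T + \int\|u\|_V^{s^\ast}\,d\mu^N_T + \int \sum_i\langle D^2\Psi(\nabla u)\partial_{x_i}\nabla u,\partial_{x_i}\nabla u\rangle_H\,d\mu^N_T \le C$. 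Since $W_0^{2,\alpha}(\Ocal)$ embeds compactly into $L^2(\Ocal)$, this uniform bound gives tightness of $\{\mu^N_T\}$ in $L^2(\Ocal)$; a diagonal argument over $N\to\infty$ (using that $X^N\to X$, the generalized solution, in $L^2(\Omega;C([0,T];H))$) and $T\to\infty$ extracts a limit $\mu$, which by the standard Krylov--Bogoliubov/Feller argument (the Feller property was noted after \eqref{eq:defi_semigroup}) is invariant for $\{P_t\}_{t\ge0}$. Lower semicontinuity of the three functionals under weak convergence on $L^2(\Ocal)$ (each being a supremum of continuous functionals, or via Fatou applied along an a.e.-convergent subsequence together with lower semicontinuity of $\|\cdot\|_{W_0^{2,\alpha}}$, $\|\cdot\|_V$ and convexity of $\Psi$) transfers the uniform bound to $\mu$, yielding the displayed finiteness and, in particular, that $\mu$ is concentrated on $W_0^{2,\alpha}(\Ocal)\cap W_0^{1,2}(\Ocal)$.

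For uniqueness I would invoke the stability estimate of Proposition \ref{thm:decay}: if $\mu,\nu$ were two invariant measures, couple solutions started from $\mu$- and $\nu$-distributed data; the bound there shows $\E\|u_t-v_t\|_H^{2\alpha}\to 0$ as $t\to\infty$ provided the time-averaged moments $\frac1t\int_0^t\E\|u_\tau\|_V^{\alpha d_0 s}\,d\tau$ stay bounded, which holds under stationarity because $\alpha d_0 s\le s^\ast$ for the relevant $\alpha$ (one checks $d_0 s\le s^\ast$ using (C6), exactly as in the hypotheses of Theorem \ref{thm:main_in_intro}(ii)) and we have just established $\int\|u\|_V^{s^\ast}\,d\mu<\infty$. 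Hence $P_t f(u)-P_tf(v)\to 0$ in mean for Lipschitz $f$, forcing $\int f\,d\mu=\int f\,d\nu$ and so $\mu=\nu$; alternatively one can simply cite the weak-$*$-mean-ergodicity result already proved in the excerpt, whose only non-trivial hypothesis (a Lyapunov function with compact sublevels plus decay of the deterministic flow) is re-verified here under (C6) in place of \eqref{eq:coercivitybound}, since $\Theta(u)=2\int_\Ocal|\nabla u|^{2-s}\,dx - 2K|\Ocal|$ still has compact sublevels in $L^2$ by Rellich--Kondrachov precisely when $2-s>\frac{2d}{d+2}$, i.e.\ $s<\frac{4}{d+2}$ --- so here I would instead rely on the Proposition \ref{thm:decay} route which genuinely uses the full range (C6).

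The main obstacle is the It\^o-formula step for the second-order functional at the Galerkin level: one must justify that the drift term is genuinely $-\sum_i\int_\Ocal\langle D^2\Psi(\nabla X^N)\nabla\partial_{x_i}X^N,\nabla\partial_{x_i}X^N\rangle\,dx$ despite the singularity of $D^2\Psi$ at the origin and the fact that $X^N$ is only as smooth as the finite basis allows, and that this identity survives passage to the limit $N\to\infty$ so that the bound is inherited by the true solution and thence by $\mu$. Concretely I expect to need the representation \eqref{representation:A} together with the convexity of $\Psi$ (the cross-terms from the non-constant eigenvalue $\psi'-\psi(|x|)/|x|$ are handled by the ellipticity bound \eqref{ellipticity}), an approximation of $\psi$ by smooth strictly increasing functions with $\psi'_\varepsilon\downarrow$, and the convexity of the boundary (as used in the lemma following \eqref{eq:A_defi}) to control the boundary integrals that arise in the integration by parts $\int_\Ocal \div\phi(\nabla u)\,\Delta u\,dx=\sum_i\int_\Ocal\langle D^2\Psi(\nabla u)\nabla\partial_{x_i}u,\nabla\partial_{x_i}u\rangle\,dx$; this is exactly the place where the argument is delicate and where the bulk of the technical work lies.
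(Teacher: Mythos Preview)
Your overall strategy matches the paper's: an approximation scheme, an a~priori estimate producing the Dirichlet form $\sum_i\int_\Ocal\langle D^2\Psi(\nabla u)\nabla\partial_{x_i}u,\nabla\partial_{x_i}u\rangle\,dx$, conversion to $W^{2,\alpha}_0$-bounds via Lemmas~\ref{lem:a-priori} and~\ref{lem_moments}, Krylov--Bogoliubov plus tightness by compact embedding, and uniqueness through Proposition~\ref{thm:decay}. The uniqueness argument you give is essentially the paper's computation.

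There is, however, a concrete error in the Lyapunov step. The functional whose drift under \eqref{eq:main_Ito_SPDE} yields $-\sum_i\int_\Ocal\langle D^2\Psi(\nabla u)\nabla\partial_{x_i}u,\nabla\partial_{x_i}u\rangle\,dx$ is the \emph{first}-order energy $\|u\|_V^2=\|\nabla u\|_H^2$, not the $H^2$-seminorm $\sum_i\|\partial_{x_i}\nabla u\|_H^2\sim\|\Delta u\|_H^2$ that you wrote. Indeed $D(\|u\|_V^2)=-2\Delta u$, so the drift is $2\langle A(u),\Delta u\rangle_H=-2\int_\Ocal\div\phi(\nabla u)\,\Delta u\,dx$, and it is the formal identity displayed just before Lemma~\ref{lemma:stability} that turns this into the Dirichlet form. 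Had you genuinely applied It\^o to $\|D^2u\|_H^2$, the drift would involve third derivatives of $u$ and, more fatally, the It\^o correction would be $\|B\|_{L_2(U,H^2)}^2$, which is not controlled by (B1). Your own claim that the correction is bounded by $\|B\|_{L_2(U,V)}^2$ is correct precisely for the $V$-functional and confirms that this is what you should be using.

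On the rigorous implementation, the paper takes a cleaner route than the regularization $\psi_\varepsilon$ plus boundary-term analysis you sketch. First, it does \emph{not} project the operator onto a spectral Galerkin space (the paper explicitly warns that $A$ need not map $H_m$ into $V$); it keeps the full nonlinear $A$ and approximates only the noise $P_mB$ and the initial datum $P_mu_0$. Second, rather than applying It\^o directly to $\|u\|_V^2$ (which would require pairing $A(u)\in V^\ast$ with $-\Delta u$), it applies It\^o to the bounded quadratic forms $\|u\|_n^2:=\langle u,T_nu\rangle_H$, where $T_n=n(\operatorname{Id}-(\operatorname{Id}-\tfrac{\Delta}{n})^{-1})$ is the Yosida approximation of $-\Delta$. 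The drift is then $-2\,{}_{V^\ast}\langle A(u^m),T_nu^m\rangle_V$, whose nonnegativity is exactly condition (A3), already established for convex $\Ocal$ in \cite[Proposition~8.2]{VBMR} and \cite[Proposition~7.1]{GessToelle14}. Since $\|\cdot\|_n\uparrow\|\cdot\|_V$, Fatou's lemma as $n\to\infty$ delivers the uniform bound on the Dirichlet form with It\^o correction $\|B\|_{L_2(U,V)}^2$. This sidesteps entirely the singularity of $D^2\Psi$ at the origin and the delicate boundary integration by parts you identified as the main obstacle: both are already packaged into (A3).
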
 

\begin{proof}
For a complete orthonormal system $\{e_{k}\}_{k\in\N}$ of $H=L^2(\Ocal)$,
and for $H_{m}:=\operatorname{span}\{e_{k}\;\colon\;1\leq k\leq m\}$,
we consider the (finite dimensional) approximating stochastic differential equation in $H$ 
\begin{equation}\label{philaplacian_approx}\begin{split}
du^{m}_t+Au^{m}_t\,dt & =P_{m}B\,dW_{t}\\
u^{m}_0 & =P_{m}u_{0},
\end{split}\end{equation}
where $A$ is as in \eqref{eq:A_defi}.
Note that here, we cannot use the standard Galerkin approximation
as in \cite{ESvR}, for there is no guarantee that the nonlinear operator
$A$ maps $H_{m}$ into $V=H_{0}^{1}(\Ocal)$. One option would be to
add an extra smoothing step for the operator $A$ as e.g. in \cite[Proof of Theorem 3.1]{GessToelle15}.
Instead, we follow the strategy from \cite[Section 6.3.1]{GessToelle14}
to find solutions $u^{m}$ converging to $u$ in $L^{2}(\Omega,C([0,T];H))$
with the aim to consider the corresponding invariant measures $\nu_{m}$.
For $n\in\N$, let $J_{n}:=\left(\operatorname{Id}-\frac{\Delta}{n}\right)^{-1}$
and $T_{n}:=-\Delta J_{n}=n(\operatorname{Id}-J_{n})$, respectively,
be the resolvent and the smooth Yosida approximation of the nonnegative definite
differential operator $-\Delta=-\sum_{i=1}^{d}\partial_{x_{i}}\partial_{x_{i}}$,
respectively. Let $\langle u,v\rangle_{n}:=\langle u,T_{n}v\rangle_{H}$.
The induced
norms $\lVert\cdot\rVert_{n}$ are equivalent to the $H$-norm and
converge monotonically as follows
\[
\lVert u\rVert_{n}\underset{n\to+\infty}{\uparrow}\begin{cases}
\lVert u\rVert_{V}, & u\in V,\\
+\infty, & \text{otherwise.}
\end{cases}
\]
Applying It\^{o}'s formula to $\lVert u^{m}(t)\rVert_{n}^{2}$, we
get 
\begin{equation}
\lVert u_{t}^{m}\rVert_{n}^{2}=\lVert u_{0}^{m}\rVert_{n}^{2}-2\int_{0}^{t}\langle A(u_{\tau}^{m}),T_{n}(u_{\tau}^{m})\rangle\,d\tau+\int_{0}^{t}\lVert P_{m}B\rVert_{L_{2}(U,H_{m})}^{2}\,d\tau+M_{t}^{n}\label{eq:ito-1}
\end{equation}
where $t\mapsto M_{t}^{n}=2\int_{0}^{t}\langle u_{\tau}^{m},P_{m}B\,dW_{\tau}\rangle_{n}$
is a local martingale. From \cite[Proposition 8.2]{VBMR} and (C3),
applied to \cite[Proposition 7.1]{GessToelle14}, it follows that
\[
\langle A(u_{\tau}^{m}),T_{n}(u_{\tau}^{m})\rangle_{H}\ge0\quad d\tau\text{-a.e}.
\]
We note that the convexity assumption of for the domain $\Ocal$ enters here, as it is needed to apply \cite[Proposition 8.2]{VBMR}.
By taking expectation, together with Fatou's lemma, employing the
bounds $\lVert u_{0}^{m}\rVert_{n}\leq\lVert u_{0}\rVert_{V}$ and
$\lVert P_{m}B\rVert_{L_{2}(U,H^{m})}^{2}\leq\lVert B\rVert_{L_{2}(U,V)}^{2}$,
it follows from \eqref{eq:ito-1} for every $t\ge1$ that
\begin{equation} 
\label{est:Thm3.4}
\frac{1}{t}\E\left(\int_{0}^{t}\sum_{i=1}^{d}\int_{\Ocal}\langle D^{2}\Psi(\nabla u_{\tau}^{m})\nabla\partial_{x_{i}}u_{\tau}^{m},\nabla\partial_{x_{i}}u_{\tau}^{m}\rangle\,dx\,d\tau\right)\leq\frac{1}{2}\left(\E\lVert u_{0}\rVert_{V}^2+\lVert B\rVert_{L_{2}(U,V)}^{2}\right)=:K_{1},
\end{equation} 
independently of $m$. On the other hand, the application of It\^{o}'s
formula to $t\mapsto\lVert u_{t}^{m}\rVert_{H}^{2}$ yields 
\[
\lVert u_{t}^{m}\rVert_{H}^{2}=\lVert u_{0}^{m}\rVert_{H}^{2}-2\int_{0}^{t}\langle A(u_{\tau}^{m}),u_{\tau}^{m}\rangle_{H}\,d\tau+\int_{0}^{t}\lVert P_{m}B\rVert_{L_{2}(U,H_{m})}^{2}\,d\tau+N_{t}^{m}
\]
where $t\mapsto N_{t}^{m}=2\int_{0}^{t}\langle u_{\tau}^{m},P_{m}B\,dW_{\tau}\rangle_{H}$
and $\langle A(u^{m}),u^{m}\rangle=\int_{\Ocal}\langle\phi(\nabla u^{m}),\nabla u^{m}\rangle\,dx$.
Note that by (C5) and Lemma \ref{lem:growth}, we get
\[
\psi(r)\cdot r\geq c\lvert r\rvert^{1\vee(2-s)}-K,
\]
for suitable constants $c>0$ and $K\geq0$. This combined with the
same arguments as before entails that for every $t\geq1$ 
\begin{equation}\label{K2bound}
\frac{1}{t}\E\left(\int_{0}^{t}\int_{\Ocal}\lvert\nabla u_{\tau}^{m}\rvert^{1\vee(2-s)}\,dx\,d\tau\right)\leq\frac{1}{2c}\left(\lVert u_{0}\rVert_{H}^2+\lVert B\rVert_{L_{2}(U,H)}^{2}+K\right)=:K_{2}
\end{equation}
for a positive constant $K_{2}>0$.

Let us treat the case $d=1$ separately first, so let us assume that $\Ocal=(0,L)$.
By the above estimates, it is easy to prove that the family of probability measures $\left\{\frac{1}{t}\int_0^t P_\tau^\ast\delta_{u_0}\,d\tau\right\}_{t\ge 1}$ (where $\delta_{u_0}$ denotes the Dirac measure in $u_0$) is tight in the finite dimensional space $H_m$, and we thus find that by the Krylov-Bogoliubov existence theorem \cite[Theorem 3.1.1]{DaPrZergodicity} that
there exists an invariant measure $\nu_m$ for the Markov semigroup of \eqref{philaplacian_approx} such that for arbitrary $N>0$ 
\begin{equation}
\int_{H}\int_{I}\left(\lvert\partial_{x}u\rvert^{1\vee(2-s)}dx\wedge N\right)\nu_{m}(du)+\int_{H}\int_{I}\left(\psi'(\partial_{x}u)|\partial_{xx}u|^{2}dx\wedge N\right)\nu_{m}(du)\leq K_{3},\label{eq:moment-ito}
\end{equation}
where $K_{3}=K_{1}+K_{2}$. An application of the monotone convergence
lemma yields
\begin{equation}
\sup_{m\geq1}\int_{H}\int_{I}\lvert\partial_{x}u\rvert^{1\vee(2-s)}\,dx\,\nu_{m}(du)+\sup_{m\geq1}\int_{H}\int_{I}\psi'(\partial_{x}u)|\partial_{xx}u|^{2}\,dx\,\nu_{m}(du)\leq K_{3}.\label{eq_fatou}
\end{equation}
Combining \eqref{eq:moment-ito} with Lemma \ref{lem:a-priori}, we
obtain 
\begin{equation}
\sup_{m\ge1}\int_{H}\left(\int_{0}^{L}\lvert\partial_{xx}u\rvert\,dx\right)^{s^{\ast}}\,\nu_{m}(du)<+\infty.\label{eq:moment}
\end{equation}
Due to the compactness of the embedding $W_{0}^{2,1}(0,L)\subset W_{0}^{1,1}(0,L)$,
we get that the sequence of measures $\{\nu_{m}\}_{m\geq1}$ is tight
w.r.t. the $W^{1,1}$-topology. Let $\nu$ denote the limit of a weakly
convergent subsequence in the sense of weak convergence of probability measures. For a function $u\in L_{\loc}^{1}(B)$, we define its total variation on an open subset $B\subset\R^d$ by
$$[Du](B):=\sup\left\{ \int_{B}u\div v\,dx\;:\;v\in C^{1}(\overline{B},\R^{d}),\;\lVert v\rVert_{\infty}\le1\right\}.$$
Recall that $u\in L^{1}(B)$ belongs to the space $BV(B)$
of functions of bounded variation if $[Du](B)<+\infty$. 
Note that
$$u\mapsto\begin{cases} [D\partial_{x}u](\R),&\text{ if }\partial_x u\in BV(\R)\cap L^1(0,L),\;u\in W^{1,1}_0(0,L),\\ +\infty,&\text{ if } \partial_x u\in L^1(0,L)\setminus BV(\R),\;u\in W^{1,1}_0(0,L),\end{cases}$$
is the lower semi-continuous envelope of
$$u\mapsto\int_0^L|\partial_{xx} u|\,dx,\quad u\in W^{2,1}_0(0,L),$$ in $W^{1,1}_0(0,L)$.
The lower semi-continuity of $u\mapsto [D\partial_{x}u](\R)$ w.r.t the strong $W^{1,1}_0(0,L)$-topology, 
together with \eqref{eq:moment} yields by \cite[Proposition 1.62 (a)]{AFP20},
\[
\int_{H}\left([D\partial_{x}u](\R)\right)^{s^{\ast}}\,\nu(du)<+\infty.
\]
However, by the tightness, it can be seen by standard arguments that the support of $\nu$ is contained in $W_0^{2,1}(0,L)$ and thus
\[
\int_{H}\left(\int_0^L\partial_{xx}u\,dx\right)^{s^{\ast}}\,\nu(du)<+\infty.
\]
From the boundedness of the embedding of $W_{0}^{2,1}(0,L)$ into
$W_{0}^{1,2}(0,L)$ it follows that 
\[
\int_{H}\lVert u\Vert_{V}^{s^{\ast}}\,\nu(du)<+\infty.
\]
Hence the support of $\nu$ is included in $W_0^{2,1}(0,L)\cap H_0^{1}(0,L)$.

Let us suppose $d\ge 1$ now. 
By the above estimates, it is easy to prove that the family of probability measures $\left\{\frac{1}{t}\int_0^t P_\tau^\ast\delta_{u_0}\,d\tau\right\}_{t\ge 1}$ is tight in the finite dimensional space $H_m$ and thus by the Krylov-Bogoliubov existence theorem \cite[Theorem 3.1.1]{DaPrZergodicity},
as the bound \eqref{K2bound} is independent of
$t$, there exists an invariant measure $\nu_{m}$ for \eqref{philaplacian_approx}.
To show uniqueness, note that for any Lipschitz function $F$ and
$\nu_{m}$-almost every $u_{0},v_{0}$ in $H_{m}$, Proposition \ref{thm:decay}
yields (for $\alpha=\frac{1}{2}$ and $l=d_{0}=1\vee\frac{d}{2})$
that
\begin{equation}\label{eq:equilibrium}
\begin{split}
 & \abs{\E\left(\frac{1}{t}\int_{0}^{t}F(u_{\tau})\,d\tau\right)-\E\left(\frac{1}{t}\int_{0}^{t}F(v_{\tau})\,d\tau\right)}\\
\leq & \operatorname{Lip}(F)\,\E\left(\frac{1}{t}\int_{0}^{t}\lVert u_{\tau}-v_{\tau}\rVert_{H}\,d\tau\right)\\
\le & t^{-\frac{d_{0}}{2}}C\left(1+\frac{1}{t}\E\left(\int_{0}^{t}\lVert u_{\tau}\rVert_{V}^{\frac{sd_{0}}{2}}+\lVert v_{\tau}\rVert_{V}^{\frac{sd_{0}}{2}}\,d\tau\right)\right)\lVert u_{0}-v_{0}\rVert_{H}\\
\le & t^{-\frac{d_{0}}{2}}C\left(1+\lVert u_{0}\rVert_{V}^{\frac{sd_{0}}{2}}+\lVert v_{0}\rVert_{V}^{\frac{sd_{0}}{2}}\right)\lVert u_{0}-v_{0}\rVert_{H}\longrightarrow0,
\end{split}\end{equation}
as $t\to+\infty,$ where $C>0$ may change from line to line. The latter implies uniqueness of the invariant measure $\nu_m$ of the ergodic semigroup of \eqref{philaplacian_approx} by standard arguments, see e.g. \cite[Section 2.1, proof of Theorem 1.2 (iii)]{LiuToelle}.

Now, it follows from ergodicity that for arbitrary
$L>0$ 
\begin{equation}
\int_{H}\sum_{i=1}^{d}\int_{\mathcal{O}}\left(\sprod{D^{2}\Psi(\nabla u)\nabla u_{x_{i}}}{\nabla u_{x_{i}}}dx\wedge L\right)\,\nu_{m}(du)\leq C.\label{moment}
\end{equation}
The monotone convergence lemma yields 
\begin{equation}
\sup_{m\geq1}\int_{H}\sum_{i=1}^{d}\int_{\mathcal{O}}\left(\sprod{D^{2}\Psi(\nabla u)\nabla u_{x_{i}}}{\nabla u_{x_{i}}}dx\right)\,\nu_{m}(du)\leq C.\label{eq_fatou-1}
\end{equation}
Combining \eqref{eq_fatou-1} with Lemma \ref{lem_moments}, we obtain
\begin{equation}
\sup_{m\ge1}\int_{H}\norm{u}_{W_{0}^{2,(2-s)\frac{d}{d-s}}}^{s^{\ast}}\,\nu_{m}(du)<+\infty.\label{eq:moment-1}
\end{equation}
Keeping assumption (C6) in mind, the compactness of the embedding $W_{0}^{2,d\frac{2-s}{d-s}}(\mathcal{O})$
into $V=W_{0}^{1,2}(\Ocal)$ for $s<\frac{4}{d}$ implies that
the sequence of measures $\{\nu_{m}\}_{m\geq1}$ is tight in $V$. Thus,
there exists an invariant measure $\mu$ such that 
\[
\int_{H}\lVert u\Vert_{V}^{s^{\ast}}\,\mu(du)<+\infty.
\]
The moment estimate for the $W_{0}^{2,\alpha}(\Ocal)$-norm, where $\alpha$ is as in the statement of the theorem, follows by lower semi-continuity
from \eqref{eq:moment-1} and \cite[Proposition 1.62 (a)]{AFP20}.

The uniqueness of $\mu$ can be proved by the same arguments as for $\nu_m$, see the computation \eqref{eq:equilibrium} combined with \cite[Section 2.1, proof of Theorem 1.2 (iii)]{LiuToelle}.
\end{proof}

As a consequence, we obtain the proof of Theorem \ref{thm:main_in_intro} (i).

\begin{cor}[Stochastic $p$-Laplace]
Suppose that $\psi(r)=|r|^{p-2}r$, $p\in (1,2)$. Then, under assumptions (C6) and (B1) we get that the unique invariant measure $\mu$ is concentrated on $W_0^{2,\frac{dp}{d+p-2}}(\Ocal)\cap H^1_0(\Ocal)\subset L^2(\Ocal)$ and satisfies
\[
\int_{L^2(\Ocal)}\|u\|_{W_{0}^{2,\frac{dp}{d+p-2}}(\Ocal)}^{p}\,\mu(du)+\int_{L^2(\Ocal)}\lVert u\rVert_{H^1_0(\Ocal)}^{p}\,\mu(du)+ \int_{L^2(\Ocal)}\int_\Ocal|\nabla u|^{p-2}|D^2 u|^2\, dx\,\mu(du) <+\infty.
\]
\end{cor}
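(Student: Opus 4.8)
\emph{Proof proposal.} The plan is to obtain the statement by specializing Theorem~\ref{support} to $\psi(r)=|r|^{p-2}r$. First I would recall from Example~\ref{exa:main_examples} that for $p\in(1,2)$ this $\psi$ satisfies (C1)--(C5) with $s=2-p\in(0,1)$, $a=0$, $b=(p-1)^{-1}$, $C=c=K=1$. Adding the hypotheses (C6) and (B1) assumed in the corollary, all hypotheses of Theorem~\ref{support} are in force, so there is a unique invariant Borel probability measure $\mu$ for $\{P_t\}_{t\ge0}$ together with the three moment bounds stated there; it only remains to rewrite the exponents and the quadratic form occurring in Theorem~\ref{support} in terms of $p$.

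For the H\"older exponent, $s^{\ast}=(2-s)\vee\frac{4-s}{2+s}$ becomes $s^{\ast}=p\vee\frac{p+2}{4-p}$; since $(p-1)(p-2)\le0$ on $(1,2)$ and $4-p>0$ one has $p(4-p)\ge p+2$, i.e.\ $p\ge\frac{p+2}{4-p}$, hence $s^{\ast}=p$. For the Sobolev exponent, $d\frac{2-s}{d-s}=\frac{dp}{d+p-2}$, and $d+p-2>0$ as well as $\frac{dp}{d+p-2}>1$ for every $d\ge1$, $p\in(1,2)$ (the latter because $d(p-1)\ge p-1>p-2$); for $d\ge2$ this endpoint value of $\alpha$ is admissible in Theorem~\ref{support}.

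Finally I would identify the Hessian term. For $x\ne0$ the matrix $D^{2}\Psi(x)$ has eigenvalues $\psi'(|x|)=(p-1)|x|^{p-2}$ with multiplicity $1$ and $\frac{\psi(|x|)}{|x|}=|x|^{p-2}$ with multiplicity $d-1$; since $p<2$ the smaller eigenvalue is $(p-1)|x|^{p-2}$, so that
\[
(p-1)|x|^{p-2}|h|^{2}\le\langle D^{2}\Psi(x)h,h\rangle\le|x|^{p-2}|h|^{2},\qquad h\in\R^{d}.
\]
Applying the left inequality with $h=\partial_{x_{i}}\nabla u$ and summing over $i$ gives
\[
\sum_{i=1}^{d}\langle D^{2}\Psi(\nabla u)\,\partial_{x_{i}}\nabla u,\partial_{x_{i}}\nabla u\rangle_{H}\ge(p-1)\int_{\Ocal}|\nabla u|^{p-2}|D^{2}u|^{2}\,dx,
\]
with $|D^{2}u|^{2}=\sum_{i,j=1}^{d}|\partial_{x_{i}}\partial_{x_{j}}u|^{2}$. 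Plugging these identifications into the three moment bounds of Theorem~\ref{support} (with $s^{\ast}=p$ and $\alpha=\frac{dp}{d+p-2}$) yields precisely the three displayed integrals, and the inclusion of the support into $W_0^{2,\frac{dp}{d+p-2}}(\Ocal)\cap H^1_0(\Ocal)$ is the corresponding assertion of Theorem~\ref{support}.

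I do not expect any genuine obstacle here: the entire analytic content is already contained in Theorem~\ref{support}, and what remains are the elementary exponent computations and the eigenvalue bound above. The single point requiring a moment of care is the case $d=1$, where Theorem~\ref{support} only delivers the $W_0^{2,1}$-bound, so the formula $\frac{dp}{d+p-2}$ should be read accordingly (or the corollary restricted to $d\ge2$).
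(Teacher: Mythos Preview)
Your proposal is correct and follows essentially the same route as the paper: specialize Theorem~\ref{support} with $s=2-p$, note $s^{\ast}=p$ and $d\frac{2-s}{d-s}=\frac{dp}{d+p-2}$, and identify the Hessian quadratic form. The paper's proof is terser, simply recording $s^{\ast}=p$ and the explicit matrix $(D^{2}\Psi(x))_{(i,j)}=|x|^{p-2}\delta_{ij}-(2-p)|x|^{p-2}\frac{x_ix_j}{|x|^{2}}$, from which the two-sided bound $(p-1)|x|^{p-2}|h|^{2}\le\langle D^{2}\Psi(x)h,h\rangle\le|x|^{p-2}|h|^{2}$ is immediate; your eigenvalue argument arrives at the same inequality. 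Your observation about the $d=1$ case is apt and not addressed in the paper's short proof.
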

\begin{proof}
Note that here, $s^\ast=p$.
The result follows from the fact that for $\psi(r)=|r|^{p-2}r$, $\Psi(x)=\frac{1}{p}|x|^p$, 
\[(D^2\Psi(x))_{(i,j)}=|x|^{p-2}\delta_{ij}-(2-p)|x|^{p-2}\frac{x_i x_j}{|x|^2}.\]
\end{proof}

\section{\label{sec:decayandexamples}Decay estimate and examples}

Proposition \ref{thm:decay} is applied to prove the following main decay estimate for
the semigroup $\{P_{t}\}$.
\begin{thm}
\label{thm:semigroup} Let $\{P_{t}\}$ be the semigroup from \eqref{eq:defi_semigroup}
and let $\mu$ denote its unique invariant measure. Let $d_{0}:=1\vee\frac{d}{2}$. Let $s^{\ast}=(2-s)\vee\frac{4-s}{2+s}$. Let
$0<\beta\leq\beta^{\star}:=1\wedge\frac{8-2s}{s(2+s)d_{0}}\in\left[\frac{1}{2d_{0}},1\right]$. Then, under the assumptions of Proposition \ref{thm:decay}, there exist non-negative constants $C_{1}$, $C_{2}$
and $C_{3}$ such that 
\[
\limsup_{t\to+\infty}\left[t^{\frac{s^{\ast}}{s}}\frac{\lvert P_{t}f(u)-P_{t}f(v)\rvert}{\lVert u-v\rVert_{H}^{\beta}}\right]\leq C_{1}|f|_{\beta}\left(C_{2}+C_{3}\int_{H}\lVert u\rVert_{V}^{s^{\ast}}\,\mu(du)\right),
\]
for every $u,v\in V$ and every $f:H\to\R$ that is bounded and $\beta$-H\"{o}lder-continuous, i.e.,
\[
\sup_{\substack{u,v\in H\\
u\not=v
}
}\frac{\lvert f(u)-f(v)\rvert}{\lVert u-v\rVert_{H}^{\beta}}=:\lvert f\rvert_{\beta}<+\infty.
\] 
\end{thm}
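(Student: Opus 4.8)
The plan is to combine the pathwise decay estimate of Proposition~\ref{thm:decay} with the uniform $V$-moment bound for the invariant measure obtained in Theorem~\ref{support}. First I would fix $\beta\le\beta^{\star}$ and apply Proposition~\ref{thm:decay} with the choices $\alpha=\beta/2$ and a suitable $l\ge1$: since $f$ is $\beta$-H\"older, $\lvert P_{t}f(u)-P_{t}f(v)\rvert\le\lvert f\rvert_{\beta}\,\E[\lVert u_{t}-v_{t}\rVert_{H}^{\beta}]$ (after possibly passing through the generalized-solution approximation and Jensen), and Proposition~\ref{thm:decay} with $\alpha=\beta/2$ bounds $\lVert u_{t}-v_{t}\rVert_{H}^{\beta}$ by
\[
t^{-\frac{\beta l}{2}}C\left(\frac{1}{t}\int_{0}^{t}(a|\Ocal|)^{\frac{\beta d_{0}}{2}}+b^{\frac{\beta d_{0}}{2}}\lVert u_{\tau}\rVert_{V}^{\frac{\beta d_{0}s}{2}}+b^{\frac{\beta d_{0}}{2}}\lVert v_{\tau}\rVert_{V}^{\frac{\beta d_{0}s}{2}}\,d\tau\right)^{\frac{l}{d_{0}}}\lVert u_{0}-v_{0}\rVert_{H}^{\beta}.
\]
To get the claimed time-exponent $s^{\ast}/s$ I would solve $\beta l/2=s^{\ast}/s$ for $l$; the constraint $\beta\le\beta^{\star}=1\wedge\frac{8-2s}{s(2+s)d_{0}}$ is precisely what makes the resulting $l$ satisfy $l\ge1$ (and keeps the exponents below the integrability threshold $s^{\ast}$ so the time average of the $V$-norms stays controllable), so I would check that arithmetic carefully as the first genuine step.

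Second, I would control the time-averaged expectation $\frac{1}{t}\int_{0}^{t}\E[\lVert u_{\tau}\rVert_{V}^{\frac{\beta d_{0}s}{2}}]\,d\tau$ uniformly in $t$. Here one starts $u$ from the invariant measure $\mu$ (or, for the statement with general $u,v\in V$, uses the It\^o/Yosida a~priori estimate \eqref{est:Thm3.4}--\eqref{K2bound} from the proof of Theorem~\ref{support} together with $\E\lVert u_{0}\rVert_{V}^{2}<\infty$); stationarity gives $\E[\lVert u_{\tau}\rVert_{V}^{r}]=\int_{H}\lVert u\rVert_{V}^{r}\,\mu(du)$ for $\mu$-distributed initial data, and since $\frac{\beta d_{0}s}{2}\le s^{\ast}$ under $\beta\le\beta^{\star}$, Jensen (or H\"older with $|\Ocal|$-type normalization) bounds this by $C(1+\int_{H}\lVert u\rVert_{V}^{s^{\ast}}\,\mu(du))$, which is finite by Theorem~\ref{support}. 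Taking the power $l/d_{0}$ and using subadditivity of $x\mapsto x^{l/d_{0}}$ when $l/d_{0}\le1$ (or convexity and a further $L^{p}$-bound when $l/d_{0}>1$) produces a constant of the advertised shape $C_{2}+C_{3}\int_{H}\lVert u\rVert_{V}^{s^{\ast}}\,\mu(du)$; the remaining $\lVert u_{0}-v_{0}\rVert_{H}^{\beta}$ divides out against the denominator, and the prefactor $t^{-\beta l/2}=t^{-s^{\ast}/s}$ is exactly cancelled by the $t^{s^{\ast}/s}$ in the $\limsup$.

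Third, I would handle the reduction from generalized solutions $u,v\in V$ back to the semigroup $P_{t}$ acting on bounded $\beta$-H\"older $f$: for such $f$ one has $P_{t}f(u)=\E[f(X_{t}^{u})]$ by the very definition \eqref{eq:defi_semigroup} extended to $C_{b}(H)$ (bounded $\beta$-H\"older functions are in particular in $C_{b}(H)$, and one approximates by $\mathrm{Lip}_{b}$ functions if the extension is only stated there), so $\lvert P_{t}f(u)-P_{t}f(v)\rvert\le\lvert f\rvert_{\beta}\E[\lVert X_{t}^{u}-X_{t}^{v}\rVert_{H}^{\beta}]$ and the estimate above applies with $u_{0}=u$, $v_{0}=v$. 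I expect the main obstacle to be purely bookkeeping: verifying that the exponent gymnastics ($\beta\le\beta^{\star}$ $\Rightarrow$ $l\ge1$ and $\frac{\beta d_{0}s}{2}\le s^{\ast}$), the placement of the power $l/d_{0}$, and the use of the a~priori bounds \eqref{est:Thm3.4}--\eqref{K2bound} versus stationarity all fit together so that the resulting constant genuinely has the claimed form and no hidden $t$-dependence survives; the two functional-analytic inputs (Proposition~\ref{thm:decay} and the $\mu$-integrability from Theorem~\ref{support}) are already in hand.
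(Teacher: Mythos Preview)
Your proposal is correct and follows essentially the same strategy as the paper: set $\alpha=\beta/2$, choose $l$ so that the time exponent from Proposition~\ref{thm:decay} becomes $s^{\ast}/s$, verify that $\beta\le\beta^{\star}$ is exactly the constraint making $l\ge1$ and forcing the final $V$-exponent to equal $s^{\ast}$, and then control the time-averaged $V$-moment. The only noteworthy difference is in the last step: where you propose to use stationarity (starting from $\mu$) or the a~priori estimates \eqref{est:Thm3.4}--\eqref{K2bound}, the paper simply invokes the ergodicity of $\{P_{t}\}$ (established in Theorem~\ref{support}) to conclude that $\frac{1}{t}\int_{0}^{t}\E\lVert u_{\tau}\rVert_{V}^{s^{\ast}}\,d\tau\to\int_{H}\lVert u\rVert_{V}^{s^{\ast}}\,\mu(du)$ for any initial $u\in V$, which is cleaner and avoids the extra reduction you anticipate.
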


\begin{proof}
We use Proposition \ref{thm:decay} and get for $\frac{\beta}{2}=:\alpha\le\frac{1}{2}\wedge\frac{s^{\ast}}{sd_{0}}=\frac{\beta^{\star}}{2}$,
$l:=\frac{s^{\ast}}{\alpha d_{0}s}\ge1$, 
\begin{align*}
 & \lvert P_{t}f(u)-P_{t}f(v)\rvert=\lvert\E[f(u_{t})-f(v_{t})]\rvert\leq\lvert f\rvert_{2\alpha}\E\left[\lVert u_{t}-v_{t}\rVert_{H}^{2\alpha}\right]\\
 & \leq|f|_{2\alpha}t^{-\frac{s^{\ast}}{s}}C(l,\alpha,\Ocal,d)\E\left[\left(\frac{1}{t}\int_{0}^{t}\left((a|\Ocal|)^{\alpha d_{0}}+b^{\alpha d_{0}}\lVert u_{\tau}\rVert_{V}^{\alpha d_{0}s}+b^{\alpha d_{0}}\lVert v_{\tau}\rVert_{V}^{\alpha d_{0}s}\right)\,d\tau\right)^{\frac{l}{d_{0}}}\right]\lVert u_{0}-v_{0}\rVert_{H}^{2\alpha}\\
 & \leq|f|_{\beta}t^{-\frac{s^{\ast}}{s}}C(l,\alpha,\Ocal,d)3^{(\frac{l}{d_{0}}-1)\vee0}\E\left(\frac{1}{t}\int_{0}^{t}\left((a|\Ocal|)^{\frac{s^{\ast}}{s}}+b^{\frac{s^{\ast}}{s}}\lVert u_{\tau}\rVert_{V}^{s^{\ast}}+b^{\frac{s^{\ast}}{s}}\lVert v_{\tau}\rVert_{V}^{s^{\ast}}\right)\,d\tau\right)\lVert u_{0}-v_{0}\rVert_{H}^{\beta}.
\end{align*}
Note that $\E\left(\frac{1}{t}\int_{0}^{t}\lVert u_{\tau}\rVert_{V}^{s^{\ast}}\,d\tau\right)$
converges to $\int_{H}\lVert u\rVert_{V}^{s^{\ast}}\,\mu(du)$ due
to the ergodicity of $\{P_{t}\}$. 
\end{proof}

As a consequence, we obtain the proof of Theorem \ref{thm:main_in_intro} (ii).
As an application, we find the following
convergence rates for the main examples.

\subsection{Stochastic mean curvature flow}

For the stochastic mean curvature flow equation in $1+1$ dimensions,
i.e. 
\[
A(u)=-\frac{\partial_{xx}u}{1+(\partial_{x}u)^{2}}=-\frac{\partial}{\partial x}\arctan(\partial_{x}u),\quad\psi(r)=\arctan(r),
\]
we have $s=2$, $\beta^{\star}=\frac{1}{2}$, so our result recovers
the result in \cite{ESvRS}.
\begin{cor}
\label{cor1} For $f$ H\"{o}lder continuous with $\beta=\frac{1}{2}$,
we have that 
\[
\limsup_{t\to+\infty}\left[t^{\frac{1}{4}}\frac{\lvert P_{t}f(u)-P_{t}f(v)\rvert}{\lVert u-v\rVert_{L^{2}(0,L)}^{\beta}}\right]\leq C_{1}\lvert f\rvert_{\frac{1}{2}}\left(C_{2}+C_{3}\int_{H}\lVert u\rVert_{V}^{\frac{1}{2}}\,\mu(du)\right),
\]
where the constants do not depend on $f$.
\end{cor}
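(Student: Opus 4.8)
The plan is to read off Corollary \ref{cor1} as a direct specialization of Theorem \ref{thm:semigroup} to the nonlinearity $\psi(r)=\arctan(r)$ in spatial dimension $d=1$, with $\Ocal=(0,L)$. First I would check that this $\psi$ is admissible: conditions (C1)--(C5) are recorded in Example \ref{exa:main_examples} (the curve shortening flow case, with $C=a=b=c=K=1$ and $s=2$), and (C6) holds because for $d=1$ we have $\tfrac4d\wedge 2=2$, so $s=2$ is permitted (with equality). Here is precisely the improvement over the classical restriction $s<\tfrac{4}{d+2}\wedge 1$, cf. Remark \ref{rem:improvement}. Together with (B1) on $B$, this places us under the hypotheses of Proposition \ref{thm:decay}, and Theorem \ref{support} then furnishes the unique invariant measure $\mu$ together with the moment bound $\int_H\lVert u\rVert_V^{s^{\ast}}\,\mu(du)<+\infty$, so that the right-hand side of the asserted estimate is finite.

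Next I would compute the relevant exponents when $s=2$. One has
\[
s^{\ast}=(2-s)\vee\frac{4-s}{2+s}=0\vee\frac{2}{4}=\frac12,
\]
hence $\frac{s^{\ast}}{s}=\frac14$, which is the power of $t$ in the statement. Since $d=1$ we get $d_0=1\vee\frac d2=1$, and therefore
\[
\beta^{\star}=1\wedge\frac{8-2s}{s(2+s)d_0}=1\wedge\frac{4}{8}=\frac12,
\]
so that $\beta=\frac12$ is an admissible H\"older exponent (in fact $\beta=\beta^{\star}$). Because $d=1$ we also identify $H=L^2(\Ocal)=L^2(0,L)$ and $V=H_0^1(0,L)$.

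Finally I would simply invoke Theorem \ref{thm:semigroup} with these data. Its conclusion states that there exist non-negative constants $C_1,C_2,C_3$, depending only on $L$, $d$ and the structural constants $a,b,s$ (and in particular not on $f$), with
\[
\limsup_{t\to+\infty}\left[t^{\frac{s^{\ast}}{s}}\frac{\lvert P_{t}f(u)-P_{t}f(v)\rvert}{\lVert u-v\rVert_{H}^{\beta}}\right]\leq C_{1}|f|_{\beta}\left(C_{2}+C_{3}\int_{H}\lVert u\rVert_{V}^{s^{\ast}}\,\mu(du)\right)
\]
for every $u,v\in V$ and every bounded $\beta$-H\"older-continuous $f$. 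Substituting $\tfrac{s^{\ast}}{s}=\tfrac14$, $\beta=\tfrac12$ and $s^{\ast}=\tfrac12$ yields exactly the claimed inequality.

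There is essentially no genuine obstacle: the result is an instantiation of the general decay theorem. The only steps requiring attention are bookkeeping, namely verifying that $s=2$ is still covered by (C6) in dimension $d=1$ (it is, with equality), and confirming that the moment $\int_H\lVert u\rVert_V^{s^{\ast}}\,\mu(du)$ on the right-hand side is finite, which is guaranteed by Theorem \ref{support}.
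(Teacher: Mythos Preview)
Your proposal is correct and matches the paper's approach exactly: the paper states Corollary~\ref{cor1} as an immediate specialization of Theorem~\ref{thm:semigroup} to the curve shortening flow case $\psi(r)=\arctan(r)$ in $d=1$, noting only that $s=2$ and $\beta^{\star}=\tfrac12$. Your computations of $s^{\ast}=\tfrac12$, $\tfrac{s^{\ast}}{s}=\tfrac14$, $d_0=1$, and the verification of (C6) are precisely the bookkeeping that the paper leaves implicit.
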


\subsection{Singular $p$-Laplace}

Let $d_{0}:=1\vee\frac{d}{2}$. Consider, for $p\in(1,2)$, the operator
\[
A(u)=-\div\left(|\nabla u|^{p-2}\nabla u\right),\quad\psi(r)=|r|^{p-2}r.
\]
With $s=2-p$, we get $\beta^{\star}=\frac{8-2(2-p)}{(2-p)(4-p)d_{0}}\wedge1$,
$s^{\ast}=p$ and therefore get the following result which improves
the result in \cite{LiuToelle}, see also Remark \ref{rem:improvement}.
\begin{cor}
\label{cor3} For $f$ H\"{o}lder continuous with $0<\beta\le\beta^{\star}$,
we have that
\[
\limsup_{t\to+\infty}\left[t^{\frac{p}{2-p}}\frac{\lvert P_{t}f(u)-P_{t}f(v)\rvert}{\lVert u-v\rVert_{L^{2}(\Ocal)}^{\beta}}\right]\leq C_{1}\lvert f\rvert_{\beta}\left(C_{2}+C_{3}\int_{H}\lVert u\rVert_{V}^{p}\,\mu(du)\right),
\]
where the constants do not depend on $f$.
\end{cor}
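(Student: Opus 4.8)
The plan is to obtain Corollary~\ref{cor3} as a direct specialization of Theorem~\ref{thm:semigroup} to the choice $\psi(r)=|r|^{p-2}r$, the only genuine work being the evaluation of the exponents. First I would note that, by Example~\ref{exa:main_examples}, this $\psi$ satisfies (C1)--(C5) with $a=0$, $b=(p-1)^{-1}$, $C=c=K=1$ and $s=2-p$, and that (C6) is precisely the standing hypothesis (it reads $s=2-p\le\frac4d\wedge2$, i.e.\ $p\ge 2-\frac4d$, cf.\ Remark~\ref{rem:improvement}). Hence all hypotheses of Proposition~\ref{thm:decay} and Theorem~\ref{thm:semigroup} are in force, and Theorem~\ref{support} ensures $\int_H\|u\|_V^{s^{\ast}}\,\mu(du)<+\infty$, so that the right-hand side of the claimed bound is finite. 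The independence of $C_1,C_2,C_3$ from $f$ and the extraction of the factor $|f|_\beta$ are then inherited verbatim from Theorem~\ref{thm:semigroup}.

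Next I would evaluate $s^{\ast}$ at $s=2-p$: by definition $s^{\ast}=(2-s)\vee\frac{4-s}{2+s}=p\vee\frac{2+p}{4-p}$. The only point requiring an argument is that the first branch dominates, i.e.\ $p\ge\frac{2+p}{4-p}$ for all $p\in(1,2)$; clearing denominators this is $p(4-p)\ge 2+p$, equivalently $-(p^2-3p+2)\ge0$, equivalently $(p-1)(2-p)\ge0$, which holds. Thus $s^{\ast}=p$ and $\frac{s^{\ast}}{s}=\frac{p}{2-p}$, which is the power of $t$ appearing in the statement.

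Finally I would substitute $s=2-p$ into $\beta^{\star}=1\wedge\frac{8-2s}{s(2+s)d_{0}}$ with $d_{0}=1\vee\frac d2$: using $8-2s=8-2(2-p)=2(2+p)$ and $2+s=4-p$, this becomes $\beta^{\star}=1\wedge\frac{8-2(2-p)}{(2-p)(4-p)d_{0}}$, exactly as displayed. Inserting $s^{\ast}=p$, $\frac{s^{\ast}}{s}=\frac{p}{2-p}$, this $\beta^{\star}$, and the moment bound $\int_H\|u\|_V^{p}\,\mu(du)<+\infty$ from Theorem~\ref{support} into the conclusion of Theorem~\ref{thm:semigroup} yields the corollary. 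There is essentially no obstacle beyond this bookkeeping; the admissible range $0<\beta\le\beta^{\star}$ is non-empty because $\beta^{\star}\ge\frac{1}{2d_{0}}$ (already recorded in Theorem~\ref{thm:semigroup}), which for this $s$ reduces, after discarding the harmless $d_{0}$, to $4(2+p)\ge(2-p)(4-p)$, i.e.\ $10p\ge p^2$, true on $(1,2)$.
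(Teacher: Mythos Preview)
Your proposal is correct and follows exactly the approach of the paper: the corollary is stated there as an immediate specialization of Theorem~\ref{thm:semigroup} to $\psi(r)=|r|^{p-2}r$, with the only content being the identification $s=2-p$, $s^{\ast}=p$, $\frac{s^{\ast}}{s}=\frac{p}{2-p}$, and the displayed formula for $\beta^{\star}$. Your explicit verification that $p\ge\frac{2+p}{4-p}$ on $(1,2)$ (hence $s^{\ast}=p$) is more detailed than what the paper records, but otherwise the arguments coincide.
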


\subsection{Non-Newtonian Fluids}

Let $d_{0}:=1\vee\frac{d}{2}$. Consider the following operator for
$p\in(1,2)$, 
\[
A(u)=-\div\left(\left(1+|\nabla u|^{2}\right)^{\frac{p-2}{2}}\nabla u\right),\quad\psi(r)=\left(1+r^{2}\right)^{\frac{p-2}{2}}r.
\]
As before, we have $s=2-p$, and get $\beta^{\star}=\frac{8-2(2-p)}{(2-p)(4-p)d_{0}}\wedge1$,
$s^{\ast}=p$. Therefore, we obtain the following. 
\begin{cor}
\label{cor2} For $f$ H\"{o}lder continuous with $0<\beta\le\beta^{\star}$,
we have that
\[
\limsup_{t\to+\infty}\left[t^{\frac{p}{2-p}}\frac{\lvert P_{t}f(u)-P_{t}f(v)\rvert}{\lVert u-v\rVert_{L^{2}(\Ocal)}^{\beta}}\right]\leq C_{1}\lvert f\rvert_{\beta}\left(C_{2}+C_{3}\int_{H}\lVert u\rVert_{V}^{p}\,\mu(du)\right),
\]
where the constants do not depend on $f$.
\end{cor}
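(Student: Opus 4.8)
The plan is to specialize Theorem \ref{thm:semigroup} to the non-Newtonian fluid nonlinearity $\psi(r)=(1+r^2)^{\frac{p-2}{2}}r$ with $p\in(1,2)$, so the entire argument reduces to verifying that this $\psi$ satisfies Assumptions (C1)--(C6) with the right parameter values and then reading off the constants. First I would recall from Example \ref{exa:main_examples} that this $\psi$ satisfies (C1)--(C5) with $C=K=c=1$, $a=b=(p-1)^{-1}$ and $s=2-p$; in particular (C4) holds with this $s$, and (C6) is exactly the standing hypothesis $s\le\frac4d\wedge2$, i.e.\ $p\ge 2-\frac4d$, which is assumed throughout Section \ref{sec:decayandexamples}. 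With $s=2-p$ one computes $s^{\ast}=(2-s)\vee\frac{4-s}{2+s}=p\vee\frac{2+p}{4-p}=p$ (since $p\le\frac{2+p}{4-p}$ fails exactly when $p>2$, so for $p\in(1,2)$ one checks $\frac{4-s}{2+s}=\frac{2+p}{4-p}\le p$ iff $2+p\le 4p-p^2$ iff $p^2-3p+2\le0$ iff $1\le p\le2$, hence $s^{\ast}=p$), and $\beta^{\star}=1\wedge\frac{8-2s}{s(2+s)d_0}=1\wedge\frac{8-2(2-p)}{(2-p)(4-p)d_0}=1\wedge\frac{4+2p}{(2-p)(4-p)d_0}$, matching the statement. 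The decay exponent is $\frac{s^{\ast}}{s}=\frac{p}{2-p}$.

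Next I would invoke Theorem \ref{thm:semigroup} verbatim: it gives, under (C1)--(C6) and (B1) (which hold here), the existence of nonnegative constants $C_1,C_2,C_3$ with
\[
\limsup_{t\to+\infty}\left[t^{\frac{s^{\ast}}{s}}\frac{\lvert P_{t}f(u)-P_{t}f(v)\rvert}{\lVert u-v\rVert_{H}^{\beta}}\right]\leq C_{1}|f|_{\beta}\left(C_{2}+C_{3}\int_{H}\lVert u\rVert_{V}^{s^{\ast}}\,\mu(du)\right)
\]
for every $u,v\in V$ and every bounded $\beta$-H\"older $f$. Substituting $s^{\ast}=p$ and $\frac{s^{\ast}}{s}=\frac{p}{2-p}$ yields precisely the assertion of Corollary \ref{cor2}, with $H=L^2(\Ocal)$ and $V=H^1_0(\Ocal)$. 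The finiteness of the integral $\int_H\lVert u\rVert_V^{p}\,\mu(du)$ is guaranteed by Theorem \ref{support} (applied with this $\psi$), so the right-hand side is genuinely finite; the statement that the constants do not depend on $f$ is already part of the conclusion of Theorem \ref{thm:semigroup}.

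I do not anticipate a real obstacle here — the corollary is a pure specialization. The only point requiring a line of care is the computation of $s^{\ast}$ and $\beta^{\star}$ for $s=2-p$, i.e.\ confirming $\frac{4-s}{2+s}\le 2-s$ on the relevant range so that $s^{\ast}=2-s=p$; this is the elementary inequality $p^2-3p+2\le 0$ for $p\in[1,2]$ noted above. Everything else is a direct quotation. Thus the proof is:

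\begin{proof}
By Example \ref{exa:main_examples}, $\psi(r)=(1+r^2)^{\frac{p-2}{2}}r$ satisfies (C1)--(C5) with $s=2-p$, and (C6) is the standing assumption. With $s=2-p$ we have $2-s=p$ and $\frac{4-s}{2+s}=\frac{2+p}{4-p}$; since $p^2-3p+2=(p-1)(p-2)\le 0$ for $p\in(1,2)$, one gets $\frac{2+p}{4-p}\le p$, hence $s^{\ast}=(2-s)\vee\frac{4-s}{2+s}=p$ and $\frac{s^{\ast}}{s}=\frac{p}{2-p}$. Likewise $\beta^{\star}=1\wedge\frac{8-2s}{s(2+s)d_0}=1\wedge\frac{8-2(2-p)}{(2-p)(4-p)d_0}$. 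The assertion is now immediate from Theorem \ref{thm:semigroup} with $H=L^2(\Ocal)$, $V=H^1_0(\Ocal)$, upon noting that $\int_{H}\lVert u\rVert_{V}^{p}\,\mu(du)<+\infty$ by Theorem \ref{support}.
\end{proof}
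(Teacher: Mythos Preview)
Your proposal is correct and follows exactly the paper's approach: the paper does not give a separate proof for this corollary but simply records, in the text preceding it, that for $\psi(r)=(1+r^2)^{\frac{p-2}{2}}r$ one has $s=2-p$, $s^{\ast}=p$, and $\beta^{\star}=\frac{8-2(2-p)}{(2-p)(4-p)d_{0}}\wedge1$, whence the statement is read off from Theorem~\ref{thm:semigroup}. Your added verification that $s^{\ast}=p$ via $(p-1)(p-2)\le0$ and the appeal to Theorem~\ref{support} for finiteness of the moment are correct and make the specialization fully explicit.
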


\section{\label{sec:Maximal-dissipativity}Maximal dissipativity of the associated
Kolmogorov operator}

Set
\[\kappa(r) = \psi^\prime (\abs{r}) \wedge  
\frac{\psi (\abs{r})}{\abs{r}},\]
i.e. $\Lambda_{\min}(x)=\kappa(|x|)$, where $\Lambda_{\min}$ is defined as in Lemma \ref{lemma:stability},
and set 
\[
D_{0}:=\left\{ u\in W_{0}^{1,1}(\mathcal{O})\;:\;  \sqrt{\kappa \left( |\nabla u|\right)}\, \partial_{x_i}
\partial_{x_j} u\in L^2 (\mathcal{O}) , 1\le i,j \le  d \right\} .
\]
Recall that Theorem \ref{support}  
yields that the support of the invariant measure $\mu$ is included in $D_0$, so that 
$$ 
\|\sqrt{\kappa \left( |\nabla u|\right)} \,\partial_{x_i}
\partial_{x_j} u\|^2_H 
$$
is well-defined (and finite) $\mu$-a.e., and 
\begin{equation} 
\label{FiniteMoment} 
\sum_{i,j=1}^d \int_H \|\sqrt{\kappa \left( |\nabla u|\right)} \,\partial_{x_i}
\partial_{x_j} u\|^2_H \, \mu (du) < +\infty . 
\end{equation} 
The Kolmogorov operator associated with \eqref{eq:main-spde} on sufficiently smooth functions $F$ is given by  
\[
J_{0}F(u):= \frac{1}{2}\operatorname{Tr}_H\left( BB^\ast D^{2}F(u)\right) 
- \langle A(u),DF(u)\rangle .
\]
In order to realize $J_0$ as an operator in $L^{1}(H,\mu)$ with domain $D(J_{0}) := C_{b}^{2}(H)$, we need 
to impose conditions on $A$ ensuring that $\|A(u)\|\in H$. To this end we now pose the following 
additional assumption.

\begin{assumption}
Suppose that
\begin{enumerate}
\item[(C7)] there exists a constant $M>0$ such that for every $r\in\R,$
\[
\left|  \psi'(\abs{r})-\frac{\psi(\abs{r})}{\abs{r}}\right|
+ \frac{\psi(\abs{r})}{\abs{r}}  \le M
\sqrt{\kappa (r)} . 
\]
\end{enumerate}
\end{assumption}

\begin{example}
We shall collect the following examples for (C7) and leave their verification to the reader.
\begin{enumerate}
\item
For $p\in (1,2)$, there exists as constant $C(p)>0$ such that $\psi(r)=(1+|r|^2)^{\frac{p-2}{p}}r$ satisfies (C7) with $M=C(p)$.
\item $\psi(r)=\log(1+|r|)\sgn(r)$ satisfies (C7) with $M=2$.
\item $\psi(r)=\arctan(r)$ satisfies (C7) with $M=4$.
\end{enumerate}
\end{example}

\begin{rem}
We would like to point out that for the singular $p$-Laplace $\psi(r)=|r|^{p-2}r$, (C7) reduces to
\[\sqrt{p-1}|r|^{p-2}\le M |r|^{\frac{p-2}{2}}\quad \text{for every $r\in\R$.}\]
Clearly, the existence of such $M>0$ cannot be proved for $p\not=2$.
Neither does the minimal surface flow satisfy (C7).
\end{rem}

An immediate implication of (C7) is that for $u\in D_0$ due to \eqref{representation:A} 
$$ 
\begin{aligned} 
|A(u)| & \le \left|  \psi'(\abs{\nabla u})-\frac{\psi(\abs{\nabla u})}{\abs{\nabla u}}\right|
\left( \sum_{i,j=1}^d \left( \partial_{x_i}\partial_{x_j} u \right)^2 \right)^{\frac 12} 
+  \frac{\psi(\abs{\nabla u})}{\abs{\nabla u}} \abs{\Delta u}  \\ 
& \le M \sqrt{\kappa  \left( |\nabla u|\right)} \left( \sum_{i,j=1}^d \left( \partial_{x_i}
\partial_{x_j} u \right)^2 \right)^{\frac 12} 
\end{aligned} 
$$
for some finite constant. Moreover, \eqref{FiniteMoment} now implies that $\|A(u)\| \in L^2 (H, \mu )$, 
so that for $F\in C_b^2 (H)$ it follows that $\langle A(u) ,  DF(u)\rangle\in L^2 (H, \mu )$, hence $J_0 F(u)
\in L^1 (H, \mu )$, too. From It\^{o}'s formula for applied to $F(u(t))$ with regular initial condition, it follows that $\mu$ is infinitesimally invariant for $J_{0}$. Therefore, since 
\[
J_{0}F^{2}=2FJ_{0}F+\langle BB^\ast DF,DF\rangle,\quad F\in D(J_{0}),
\]
also,
\[
\int_{H}FJ_{0}F\, d\mu = -\frac{1}{2}\int_{H} \lVert B^\ast DF\rVert^{2}\, d\mu .
\] 
Thus $J_{0}$ is dissipative in the Hilbert space $L^{2}(H,\mu)$.
By similar arguments as in \cite{ESS2008} one can prove that $J_{0}$
is also dissipative in $L^{1}(H,\mu)$. As a consequence, it is closable
and its closure $J:=\bar{J_{0}}$ with the domain $D(J)$ is dissipative.

\begin{thm}[compare with Theorem 4.1 in \cite{ESvRS}]\label{thm:max} The operator
$(J,D(J))$ generates a $C_{0}$-semigroup of contractions on $L^{1}(H,\mu)$.
\end{thm}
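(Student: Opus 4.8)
The plan is to apply the Lumer--Phillips theorem: since $J = \overline{J_0}$ is already known (from the discussion preceding the statement) to be dissipative and closed on $L^1(H,\mu)$, it suffices to verify the range condition, namely that $(\lambda - J)(D(J))$ is dense in $L^1(H,\mu)$ for some (hence all) $\lambda > 0$. Equivalently, one shows that $(\lambda - J_0)(C_b^2(H))$ is dense, since $J$ is the closure of $J_0$. The natural strategy, following Es-Sarhir--von Renesse--Stannat \cite{ESvRS} and the earlier work \cite{ESS2008,ESS2009}, is to solve the resolvent equation $\lambda F - J_0 F = g$ approximately for $g$ in a dense subclass of $L^1(H,\mu)$ (e.g. bounded Lipschitz cylinder functions), using the finite-dimensional Galerkin approximations \eqref{philaplacian_approx} together with their invariant measures $\nu_m$ constructed in the proof of Theorem \ref{support}.

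Concretely, I would proceed in the following steps. First, for each $m$ consider the Kolmogorov operator $J_0^m$ of the finite-dimensional SDE \eqref{philaplacian_approx} on $H_m$; since that equation has a non-degenerate additive noise component $P_m B\,dW$ (after possibly regularizing, or working on the range of $P_m B$) and a monotone, continuous drift $P_m A$, the resolvent equation $\lambda F_m - J_0^m F_m = g$ is solvable in a classical/strong sense and $F_m \in D(J_0^m)$, with $F_m = \int_0^\infty e^{-\lambda t} \E[g(u^m_t)]\,dt$. Second, use the uniform-in-$m$ moment bounds \eqref{est:Thm3.4}, \eqref{K2bound} (and their consequences \eqref{eq_fatou-1}, \eqref{eq:moment-1}) together with the stability estimate of Proposition \ref{thm:decay} — which is exactly what gave uniqueness of $\nu_m$ and of $\mu$ in \eqref{eq:equilibrium} — to obtain uniform control of $F_m$ and $J_0^m F_m$ in $L^1(H_m,\nu_m)$ and to pass to the limit $m\to\infty$, identifying the limit $F$ with an element of $D(J)$ solving $\lambda F - JF = g$. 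The tightness of $\{\nu_m\}$ in $V$ established in Theorem \ref{support} and the weak convergence $\nu_m \to \mu$ are what make the limiting identification of the integrals against $\mu$ legitimate; assumption (C7), via the bound $|A(u)| \le M\sqrt{\kappa(|\nabla u|)}(\sum (\partial_{x_i}\partial_{x_j}u)^2)^{1/2}$ and \eqref{FiniteMoment}, ensures $\langle A(u),DF(u)\rangle \in L^1(H,\mu)$ uniformly so that $J_0^m F_m \to J_0 F$ does not lose mass. Third, conclude by Lumer--Phillips (in the form valid on general Banach spaces, e.g. \cite[Theorem I.4.3]{daprato2012kolmogorov}-type statements) that $(J,D(J))$ generates a $C_0$-semigroup of contractions on $L^1(H,\mu)$.

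The main obstacle I expect is the range condition, i.e. showing that the Galerkin resolvents $F_m$ converge in a strong enough topology (at least $L^1(H,\mu)$, and with $J_0^m F_m$ convergent) to produce an element of $D(J)$; this is where the quantitative decay of Proposition \ref{thm:decay} is essential, because it upgrades the mere existence of invariant measures to a genuine contraction/convergence estimate $\E\|u^m_t - v^m_t\|_H \to 0$ with an explicit rate, allowing one to control $F_m(u) - F_m(v)$ and hence the equicontinuity needed for the limit. A secondary technical point is that the nonlinear operator $A$ does not map $H_m$ into $V$, which already forced the authors to abandon the standard Galerkin scheme in Theorem \ref{support}; one must therefore run the resolvent argument within the same approximation framework of \cite[Section 6.3.1]{GessToelle14} used there, being careful that the smoothing/Yosida steps are compatible with the identities $J_0 F^2 = 2F J_0 F + \langle BB^\ast DF, DF\rangle$ and the infinitesimal invariance of $\mu$. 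Once the range condition is in hand, dissipativity being already established, the generation statement is immediate.
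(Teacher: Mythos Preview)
Your overall strategy---Lumer--Phillips via the range condition---is the right framework, and you correctly identify dissipativity as already in hand. However, the route you propose to the range condition is \emph{not} the one the paper takes, and your version carries real technical obstacles that the paper's argument avoids.

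The paper does not use the Galerkin scheme \eqref{philaplacian_approx} or the approximating measures $\nu_m$ at all in this proof. Instead it regularizes the \emph{drift} directly, working throughout on the full space $H$ with the single measure $\mu$: first the Yosida approximation $A_\alpha = A\circ(\operatorname{Id}+\alpha A)^{-1}$, then a further smoothing $A_{\alpha,\beta}(u)=\int_H e^{\beta\Delta}A_\alpha(e^{\beta\Delta}u+v)\,\mathcal N_{0,\sigma_\beta}(dv)$ via the heat semigroup and Gaussian convolution. The point is that $A_{\alpha,\beta}$ is globally Lipschitz, dissipative, and---by Cameron--Martin---$C^\infty$, so the associated SDE has a differentiable flow and its resolvent $\varphi_{\alpha,\beta}=\int_0^\infty e^{-\lambda t}\E[F(u_{\alpha,\beta}(t,\cdot))]\,dt$ lies genuinely in $C_b^2(H)=D(J_0)$. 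One then writes
\[
(\lambda-J_0)\varphi_{\alpha,\beta}=F+\langle A(u)-A_{\alpha,\beta}(u),D\varphi_{\alpha,\beta}(u)\rangle,
\]
and the error term is controlled by the uniform gradient bound $\|D\varphi_{\alpha,\beta}\|\le\lambda^{-1}\|DF\|_\infty$ (from dissipativity of $A_{\alpha,\beta}$) times $\|A-A_{\alpha,\beta}\|_{L^2(H,\mu)}\to 0$. It is exactly here, and only here, that (C7) enters: it gives $\|A(u)\|_H\in L^2(H,\mu)$ via \eqref{FiniteMoment}, which both makes $J_0$ well-defined on $C_b^2(H)$ and supplies the dominating function for the convergence $A_{\alpha,\beta}\to A$.

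Your Galerkin route, by contrast, must pass to the limit simultaneously in the state space ($H_m\to H$), the reference measure ($\nu_m\rightharpoonup\mu$), and the operator ($J_0^m\to J_0$). Showing that the lifted resolvents $F_m\circ P_m$ lie in $D(J)$, and that $(\lambda-J_0)(F_m\circ P_m)\to g$ in $L^1(H,\mu)$ rather than $L^1(H_m,\nu_m)$, is substantially harder: the drift term becomes $\langle A(u),P_mDF_m(P_mu)\rangle$, and relating this to $\langle P_mA(\cdot),DF_m\rangle$ integrated against $\nu_m$ requires more than weak convergence of measures plus equicontinuity. You flag this as ``the main obstacle'' but do not indicate how to close it; the paper's Yosida-plus-mollification argument sidesteps it entirely by never leaving $(H,\mu)$.
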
 

\begin{proof} 
We shall prove that for $\lambda > 0$, $\operatorname{range}(\lambda-J)$
is dense in $L^{1}(H,\mu)$. To this aim, for $\alpha>0$, consider
the Yosida approximation of $A$ defined by 
\[
A_{\alpha}(u)=A(J_{\alpha}(u)),\quad\text{where\ensuremath{\quad J_{\alpha}(u)  
= (\operatorname{Id} + \alpha A)^{-1}(u)}.}
\]
For the sequence $A_{\alpha}$ we have the following: 
\begin{enumerate}
\item For any $\alpha>0$, $A_{\alpha}$ is dissipative and Lipschitz continuous, 
\item $\lVert A_{\alpha}(u)\rVert_{H}\leq\lVert A(u)\rVert_{H}$ for any
$u\in D(A)$. 
\end{enumerate}
 Note that the function $A_{\alpha}$ is not differentiable in general.
Therefore we consider a $C^{1}$-approximation. 
For $\alpha,\beta>0$
we set 
\[
A_{\alpha,\beta}(u):=\int_{H}e^{\beta\Delta} A_{\alpha}(e^{\beta\Delta}u+v)\,\mathcal{N}_{0,\sigma_{\beta}}(dv)
\]
where $\mathcal{N}_{0,\sigma_{\beta}}$ is the Gaussian measure on
$H$ with mean $0$ and covariance operator $\sigma_{\beta}:=\int_{0}^{\beta}e^{2\tau\Delta}\,d\tau$, and $e^{\tau \Delta}$, $\tau > 0$, denotes the semigroup generated by the Dirichlet Laplace operator on $L^2 (\mathcal O)$.
Then, $A_{\alpha,\beta}$ is dissipative since
\[
\langle A_{\alpha,\beta}(u)-A_{\alpha,\beta}(v),u-v\rangle 
= \int_{H}\left\langle A_{\alpha}(e^{\beta\Delta}u+w)-A_{\alpha}(e^{\beta\Delta}v+w), 
e^{\beta\Delta}(u-v)\right\rangle  
\,\mathcal{N}_{0,\sigma_{\beta}}(dw)\ge 0.
\]
We would like to use the Cameron-Martin theorem to show that $A_{\alpha,\beta}$
is differentiable. To this end we first need to check that $e^{\beta\Delta}u\in\operatorname{range}
(\sigma_{\beta}^{\frac 12})$. According to Proposition B.1 in \cite{DaPrZ2nd} it suffices to prove 
that there exists a constant $M$ such that $\|e^{\beta \Delta } v\|^2_H 
\le M \|\sigma_\beta^{\frac 12} v\|_H^2$ 
for all $v\in H$. But this follows from the fact that 
$$ 
\begin{aligned} 
\|e^{\beta \Delta} v\|_H^2 
& = \frac 1\beta \int_0^\beta \| e^{(\beta -\tau)\Delta} e^{\tau\Delta}v\|_H^2 \, d\tau  
\le \frac 1\beta \int_0^\beta \|e^{\tau\Delta}v\|_H^2 \, d\tau \\ 
& = \frac 1\beta \int_0^\beta \langle e^{2\tau \Delta} v,v\rangle \, d\tau 
= \frac 1\beta \langle \sigma_\beta v,v \rangle 
= \frac 1\beta \|\sigma_\beta^{\frac 12} v\|_H^2 .
\end{aligned} 
$$ 
An application of the Cameron-Martin theorem yields 
\[
\langle DA_{\alpha,\beta}(u),h\rangle 
= \int_{H}e^{\beta\Delta}A_{\alpha}(e^{\beta\Delta}u+v)\langle\sigma_{\beta}^{-1}e^{\beta\Delta}h,v\rangle\,\mathcal{N}_{0,\sigma_{\beta}}(dv).
\]
and by the Cameron-Martin formula it is $C^{\infty}$ differentiable.
Moreover, as $\alpha,\beta\to0,A_{\alpha,\beta}\to A$ pointwise.
Let us introduce the following approximating equation
\begin{align}
du_{\alpha,\beta}(t) = -A_{\alpha,\beta}(u_{\alpha,\beta}(t))\,dt + B\,dW_{t}, 
\quad t>0,\quad u_{\alpha,\beta}(0)=u. 
\label{yosida_approx}
\end{align}
Since $A_{\alpha,\beta}$ is globally Lipschitz, equation \eqref{yosida_approx}
has a unique strong solution $(u_{\alpha,\beta}(t))_{t\geq0}$. Moreover,
by the regularity of $A_{\alpha,\beta}$ the process $(u_{\alpha,\beta}(t))_{t\geq0}$
is differentiable on $H$. For any $h\in H$ we set $\eta_{h}(t,u):=Du_{\alpha,\beta}(t,u)\cdot h$.
It holds that 
\begin{align}
\frac{d}{dt}\eta_{h}(t,u) 
= - DA_{\alpha,\beta}(u_{\alpha,\beta}(t,u))\cdot\eta_{h}(t,u),\quad\eta_{h}(0,u)=h. 
\label{yosida_diff}
\end{align}
From the dissipativity of $A_{\alpha,\beta}$, we have that 
\[
\langle DA_{\alpha,\beta}(z)h,h\rangle\geq 0,\quad h\in H,z\in D(A).
\]
Hence by multiplying both sides of \ref{yosida_diff} with $\eta_{h}(t,u)$
and integrating with respect to $t$, we get 
\begin{equation}
\lVert\eta_{h}(t,u)\rVert^{2}\leq\lVert h\rVert^{2}.\label{yosida_diff_norm}
\end{equation}
Now, for $\lambda>0$ and $F\in C_{b}^{2}(H)$ consider the following
elliptic equation 
\begin{equation}
(\lambda-J_{A_{\alpha,\beta}})\varphi_{\alpha,\beta}=F,\ \lambda>0,\label{elliptic}
\end{equation}
where $J_{A_{\alpha,\beta}}$ is the Kolmogorov operator corresponding
to the stochastic differential equation \eqref{yosida_approx}. It
is well-known that this equation has a solution $\varphi_{\alpha,\beta}\in C_{b}^{2}(H)$
and can be written in the form $\varphi_{\alpha,\beta}=R(\lambda,J_{A_{\alpha,\beta}})F$,
where 
\[
R(\lambda,J_{A_{\alpha,\beta}})F(u)=\int_{0}^{+\infty}e^{-\lambda t}\E(F(u_{\alpha,\beta})(t,u))\,dt
\]
is the pseudo resolvent associated with $J_{A_{\alpha,\beta}}$. Thus
we have 
\begin{equation}
\lVert\lambda\varphi_{\alpha,\beta}\rVert_{\infty}\leq\lVert f\rVert_{\infty}.\label{norm}
\end{equation}
We have, moreover, for all $h\in H$,
\[
D{\varphi_{\alpha,\beta}}(u)h 
= \int_{0}^{+\infty}e^{-\lambda t}\E\left(DF(u_{\alpha,\beta})(t,u)((Du_{\alpha,\beta})(t,u)h)\right)dt.
\]
Consequently, using \eqref{yosida_diff_norm}, it follows that 
\[
\lVert D\varphi_{\alpha,\beta}\rVert\leq\frac{1}{\lambda}\lVert DF\rVert_{\infty}.
\]
From \eqref{elliptic}, we have 
\begin{align*}
\lambda\varphi_{\alpha,\beta}(u)  
& -\frac{1}{2}\operatorname{Tr}_H\left( BB^\ast  D^{2}\varphi(u)\right) 
 +\langle A(u),D\varphi_{\alpha,\beta}(u)\rangle\\
 & = F(u) + \langle A(u)-A_{\alpha,\beta}(u),D\varphi_{\alpha,\beta}(u)\rangle,\ \lambda>0,\ u\in D(A).
\end{align*}
Using gradient bound \eqref{norm}, we deduce that for $\alpha,\beta\to0$,
\[
\int_{H}\lvert\langle A(u)-A_{\alpha,\beta}(u),D\varphi_{\alpha,\beta}(u)\rangle\rvert\,\mu(du) 
\leq\frac{1}{\lambda}\lVert DF\rVert_\infty \lVert A_{\alpha,\beta}-A\rVert_{L^{2}(H,\mu)}.
\]
By Lebesgue's theorem $\lVert A_{\alpha,\beta}-A\rVert_{L^{2}(H,\mu)}$
converges to $0$ as $\alpha,\beta\to0$. Therefore we deduce that
for $\alpha,\beta\to0$ 
\[
\lambda\varphi_{\alpha,\beta}(u) 
- \frac{1}{2}\operatorname{Tr}_H \left( BB^\ast D^{2}\varphi_{\alpha,\beta} (u)\right) 
+ \langle A(u),D\varphi_{\alpha,\beta}(u)\rangle\to F
\]
strongly in $L^{1}(H,\mu)$. This implies that 
\[
C_{b}^{2}(H)\subset\overline{(\lambda-J_{0})(D(J_{0}))}.
\]
Since $C_{b}^{2}(H)$ is dense in $L^{1}(H,\mu)$, the proof is complete.\end{proof}

\section*{Statements and Declarations}

The authors have no relevant financial or non-financial interests to disclose.
The authors have no competing interests to declare that are relevant to the content of this article.

The work of the second author was partially supported by the German Science Foundation (DFG) via Research Unit FOR 2402 (grant no. 277012070).

The third author acknowledges support by the Academy of Finland and the European Research Council (ERC) under the European Union's Horizon 2020 research and innovation programme (grant agreements no. 741487 and no. 818437).


\begin{thebibliography}{10}

\bibitem{AFP20}
L.~Ambrosio, N.~Fusco, and D.~Pallara.
\newblock {\em Functions of bounded variation and free discontinuity problems}.
\newblock Oxford Mathematical Monographs. The Clarendon Press, Oxford
  University Press, New York, 2000.

\bibitem{barbubook}
V.~Barbu.
\newblock {\em {N}onlinear differential equations of monotone types in {B}anach
  spaces}.
\newblock Springer Monographs in Mathematics. Springer, 2010.

\bibitem{BBHT}
V.~Barbu, Z.~Brze{\'z}niak, E.~Hausenblas, and L.~Tubaro.
\newblock {E}xistence and convergence results for infinite dimensional
  nonlinear stochastic equations with multiplicative noise.
\newblock {\em Stochastic Process. Appl.}, 123(3):934--951, 2013.

\bibitem{BBT16}
V.~Barbu, Z.~Brze{\'{z}}niak, and L.~Tubaro.
\newblock {Stochastic nonlinear parabolic equations with Stratonovich gradient
  noise}.
\newblock {\em Applied Mathematics and Optimization}, 188(3):1--17, 2017.

\bibitem{BarbuDaPrato2006}
V.~Barbu and G.~Da~Prato.
\newblock Ergodicity for nonlinear stochastic equations in variational
  formulation.
\newblock {\em Appl. Math. Optim.}, 53(2):121--139, 2006.

\bibitem{BarbuDaPrato2010}
V.~Barbu and G.~Da~Prato.
\newblock Invariant measures and the {K}olmogorov equation for the stochastic
  fast diffusion equation.
\newblock {\em Stochastic Process. Appl.}, 120(7):1247--1266, 2010.

\bibitem{VBMR}
V.~Barbu and M.~R\"{o}ckner.
\newblock Stochastic variational inequalities and applications to the total
  variation flow perturbed by linear multiplicative noise.
\newblock {\em Arch. Ration. Mech. Anal.}, 209(3):797--834, 2013.

\bibitem{BR15}
V.~Barbu and M.~R{\"o}{}ckner.
\newblock {A}n operatorial approach to stochastic partial differential
  equations driven by linear multiplicative noise.
\newblock {\em J. Eur. Math. Soc. (JEMS)}, 17(7):1789--1815, 2015.

\bibitem{BCR2019}
L.~Beznea, I.~C{\^{\i}}mpean, and M.~R{\"o}ckner.
\newblock {A} new approach to the existence of invariant measures for
  {M}arkovian semigroups.
\newblock {\em Ann. Inst. H. Poincar\'e Probab. Statist.}, 55(2):977--1000,
  2019.

\bibitem{BloemkerRomito}
D.~Bl\"{o}mker and M.~Romito.
\newblock Stochastic {PDE}s and lack of regularity: a surface growth equation
  with noise: existence, uniqueness, and blow-up.
\newblock {\em Jahresber. Dtsch. Math.-Ver.}, 117(4):233--286, 2015.

\bibitem{BogachevDaPratoRoeckner}
V.~I. Bogachev, G.~Da~Prato, and M.~R\"{o}ckner.
\newblock Fokker-{P}lanck equations and maximal dissipativity for {K}olmogorov
  operators with time dependent singular drifts in {H}ilbert spaces.
\newblock {\em J. Funct. Anal.}, 256(4):1269--1298, 2009.

\bibitem{BogachevKrylovRoecknerShaposhnikov}
V.~I. Bogachev, N.~V. Krylov, M.~R\"{o}ckner, and S.~V. Shaposhnikov.
\newblock {\em Fokker-{P}lanck-{K}olmogorov equations}, volume 207 of {\em
  Mathematical Surveys and Monographs}.
\newblock American Mathematical Society, Providence, RI, 2015.

\bibitem{Breit}
D.~Breit.
\newblock Existence theory for stochastic power law fluids.
\newblock {\em J. Math. Fluid Mech.}, 17(2):295--326, 2015.

\bibitem{CM19}
A.~Cianchi and V.~G. Maz'ya.
\newblock Optimal second-order regularity for the {$p$}-{L}aplace system.
\newblock {\em J. Math. Pures Appl. (9)}, 132:41--78, 2019.

\bibitem{CM20}
A.~Cianchi and V.~G. Maz'ya.
\newblock Second-order regularity for parabolic {$p$}-{L}aplace problems.
\newblock {\em J. Geom. Anal.}, 30(2):1565--1583, 2020.

\bibitem{CiotirToelle}
I.~Ciotir and J.~M. T\"{o}lle.
\newblock Nonlinear stochastic partial differential equations with singular
  diffusivity and gradient {S}tratonovich noise.
\newblock {\em J. Funct. Anal.}, 271(7):1764--1792, 2016.

\bibitem{daprato2012kolmogorov}
G.~Da~Prato.
\newblock {\em Kolmogorov Equations for Stochastic PDEs}.
\newblock Advanced Courses in Mathematics - CRM Barcelona. Birkh{\"a}user
  Basel, 2012.

\bibitem{DaPrZergodicity}
G.~Da~Prato and J.~Zabczyk.
\newblock {\em Ergodicity for infinite-dimensional systems}, volume 229 of {\em
  London Mathematical Society Lecture Note Series}.
\newblock Cambridge University Press, Cambridge, 1996.

\bibitem{DaPrZ2nd}
G.~Da~Prato and J.~Zabczyk.
\newblock {\em Stochastic equations in infinite dimensions}, volume 152 of {\em
  Encyclopedia of Mathematics and its Applications}.
\newblock Cambridge University Press, Cambridge, second edition, 2014.

\bibitem{DiBenedetto:1993kl}
E.~DiBenedetto.
\newblock {\em {Degenerate Parabolic Equations}}.
\newblock Springer New York, New York, NY, 1993.

\bibitem{ESST}
A.~Es-Sarhir, M.~Scheutzow, J.~M. T\"{o}lle, and O.~van Gaans.
\newblock Invariant measures for monotone {SPDE}s with multiplicative noise
  term.
\newblock {\em Appl. Math. Optim.}, 68(2):275--287, 2013.

\bibitem{ESS2008}
A.~Es-Sarhir and W.~Stannat.
\newblock Invariant measures for semilinear {SPDE}'s with local {L}ipschitz
  drift coefficients and applications.
\newblock {\em J. Evol. Equ.}, 8(1):129--154, 2008.

\bibitem{ESS2009}
A.~Es-Sarhir and W.~Stannat.
\newblock Maximal dissipativity of {K}olmogorov operators with
  {C}ahn-{H}illiard type drift term.
\newblock {\em J. Differential Equations}, 247(2):424--446, 2009.

\bibitem{ESS2010}
A.~Es-Sarhir and W.~Stannat.
\newblock Improved moment estimates for invariant measures of semilinear
  diffusions in {H}ilbert spaces and applications.
\newblock {\em J. Funct. Anal.}, 259(5):1248--1272, 2010.

\bibitem{ESvR}
A.~Es-Sarhir and M.-K. von Renesse.
\newblock Ergodicity of stochastic curve shortening flow in the plane.
\newblock {\em SIAM J. Math. Anal.}, 44(1):224--244, 2012.

\bibitem{ESvRS}
A.~Es-Sarhir, M.-K. von Renesse, and W.~Stannat.
\newblock Estimates for the ergodic measure and polynomial stability of plane
  stochastic curve shortening flow.
\newblock {\em NoDEA Nonlinear Differential Equations Appl.}, 19(6):663--675,
  2012.

\bibitem{Funaki}
T.~Funaki.
\newblock {\em Lectures on random interfaces}.
\newblock SpringerBriefs in Probability and Mathematical Statistics. Springer,
  Singapore, 2016.

\bibitem{GessRoe2017}
B.~Gess and M.~R\"{o}ckner.
\newblock Stochastic variational inequalities and regularity for degenerate
  stochastic partial differential equations.
\newblock {\em Trans. Amer. Math. Soc.}, 369(5):3017--3045, 2017.

\bibitem{GessToelle14}
B.~Gess and J.~M. T\"{o}lle.
\newblock Multi-valued, singular stochastic evolution inclusions.
\newblock {\em J. Math. Pures Appl. (9)}, 101(6):789--827, 2014.

\bibitem{GessToelle16}
B.~Gess and J.~M. T\"{o}lle.
\newblock Ergodicity and local limits for stochastic local and nonlocal
  {$p$}-{L}aplace equations.
\newblock {\em SIAM J. Math. Anal.}, 48(6):4094--4125, 2016.

\bibitem{GessToelle15}
B.~Gess and J.~M. T\"{o}lle.
\newblock Stability of solutions to stochastic partial differential equations.
\newblock {\em J. Differential Equations}, 260(6):4973--5025, 2016.

\bibitem{HRvR2017}
M.~Hofmanov\'{a}, M.~R\"{o}ger, and M.~von Renesse.
\newblock Weak solutions for a stochastic mean curvature flow of
  two-dimensional graphs.
\newblock {\em Probab. Theory Related Fields}, 168(1-2):373--408, 2017.

\bibitem{HongLiLiu}
W.~Hong, S.~Li, and W.~Liu.
\newblock Asymptotic log-{H}arnack inequality and applications for {SPDE} with
  degenerate multiplicative noise.
\newblock {\em Statist. Probab. Lett.}, 164:108810, 8, 2020.

\bibitem{Kolmogorov}
A.~Kolmogoroff.
\newblock \"{U}ber die analytischen {M}ethoden in der
  {W}ahrscheinlichkeitsrechnung.
\newblock {\em Math. Ann.}, 104(1):415--458, 1931.

\bibitem{KPS}
T.~Komorowski, S.~Peszat, and T.~Szarek.
\newblock On ergodicity of some {M}arkov processes.
\newblock {\em Ann. Probab.}, 38(4):1401--1443, 2010.

\bibitem{KrRo0}
N.~V. Krylov and B.~L. Rozovski\u{\i}.
\newblock Stochastic evolution equations.
\newblock In {\em Current problems in mathematics, {V}ol. 14 ({R}ussian)},
  pages 71--147, 256. Akad. Nauk SSSR, Vsesoyuz. Inst. Nauchn. i Tekhn.
  Informatsii, Moscow, 1979.

\bibitem{Liu09harnack}
W.~Liu.
\newblock Harnack inequality and applications for stochastic evolution
  equations with monotone drifts.
\newblock {\em J. Evol. Equ.}, 9(4):747--770, 2009.

\bibitem{Liu09pLaplace}
W.~Liu.
\newblock On the stochastic {$p$}-{L}aplace equation.
\newblock {\em J. Math. Anal. Appl.}, 360(2):737--751, 2009.

\bibitem{Liu13}
W.~Liu.
\newblock Well-posedness of stochastic partial differential equations with
  {L}yapunov condition.
\newblock {\em J. Differential Equations}, 255(3):572--592, 2013.

\bibitem{LiuRoe}
W.~Liu and M.~R\"{o}ckner.
\newblock S{PDE} in {H}ilbert space with locally monotone coefficients.
\newblock {\em J. Funct. Anal.}, 259(11):2902--2922, 2010.

\bibitem{LiuToelle}
W.~Liu and J.~M. T\"{o}lle.
\newblock Existence and uniqueness of invariant measures for stochastic
  evolution equations with weakly dissipative drifts.
\newblock {\em Electron. Commun. Probab.}, 16:447--457, 2011.

\bibitem{LiuWang}
W.~Liu and F.-Y. Wang.
\newblock Harnack inequality and strong {F}eller property for stochastic
  fast-diffusion equations.
\newblock {\em J. Math. Anal. Appl.}, 342(1):651--662, 2008.

\bibitem{MarinelliScarpa2018}
C.~Marinelli and L.~Scarpa.
\newblock On the well-posedness of {SPDE}s with singular drift in divergence
  form.
\newblock In {\em Stochastic partial differential equations and related
  fields}, volume 229 of {\em Springer Proc. Math. Stat.}, pages 225--235.
  Springer, Cham, 2018.

\bibitem{MarinelliScarpa2018monotone}
C.~Marinelli and L.~Scarpa.
\newblock Strong solutions to {SPDE}s with monotone drift in divergence form.
\newblock {\em Stoch. Partial Differ. Equ. Anal. Comput.}, 6(3):364--396, 2018.

\bibitem{MarinelliScarpa2020}
C.~Marinelli and L.~Scarpa.
\newblock Ergodicity and {K}olmogorov equations for dissipative {SPDE}s with
  singular drift: a variational approach.
\newblock {\em Potential Anal.}, 52(1):69--103, 2020.

\bibitem{Neuss2021}
M.~Neu{\ss}.
\newblock Ergodicity for singular-degenerate stochastic porous media equations.
\newblock {\em J. Dyn. Diff. Equat.}, 35(2):1561--1584, 2023.

\bibitem{Pard}
E.~Pardoux.
\newblock \'{E}quations aux d\'{e}riv\'{e}es partielles stochastiques de type
  monotone.
\newblock In {\em S\'{e}minaire sur les \'{E}quations aux {D}\'{e}riv\'{e}es
  {P}artielles (1974--1975), {III}, {E}xp. {N}o. 2}, page~10. Coll\`{e}ge de
  France, Paris, 1975.

\bibitem{Pavliotis}
G.~A. Pavliotis.
\newblock {\em Stochastic processes and applications}, volume~60 of {\em Texts
  in Applied Mathematics}.
\newblock Springer, New York, 2014.
\newblock Diffusion processes, the Fokker-Planck and Langevin equations.

\bibitem{Porzio2009}
M.~M. Porzio.
\newblock On decay estimates.
\newblock {\em J. Evol. Equ.}, 9(3):561--591, 2009.

\bibitem{Porzio2011}
M.~M. Porzio.
\newblock Existence, uniqueness and behavior of solutions for a class of
  nonlinear parabolic problems.
\newblock {\em Nonlinear Anal.}, 74(16):5359--5382, 2011.

\bibitem{PrevotRoeckner}
C.~Pr\'{e}v\^{o}t and M.~R\"{o}ckner.
\newblock {\em A concise course on stochastic partial differential equations},
  volume 1905 of {\em Lecture Notes in Mathematics}.
\newblock Springer, Berlin, 2007.

\bibitem{SapountzoglouZimmermann2019}
N.~Sapountzoglou and A.~Zimmermann.
\newblock Well-posedness of renormalized solutions for a stochastic
  {$p$}-{L}aplace equation with {$L^1$}-initial data.
\newblock {\em Discrete Contin. Dyn. Syst.}, 41(5):2341--2376, 2021.

\bibitem{Sauer}
M.~Sauer.
\newblock Kolmogorov equations for randomly perturbed generalized {N}ewtonian
  fluids.
\newblock {\em Math. Nachr.}, 287(17-18), 2014.

\bibitem{ScarpaStefanelli2021}
L.~Scarpa and U.~Stefanelli.
\newblock Stochastic {PDE}s via convex minimization.
\newblock {\em Comm. Partial Differential Equations}, 46(1):66--97, 2021.

\bibitem{Stannat2011NSC}
W.~Stannat.
\newblock {$L^p$}-uniqueness of {K}olmogorov operators associated with
  2{D}-stochastic {N}avier-{S}tokes-{C}oriolis equations.
\newblock {\em Math. Nachr.}, 284(17-18):2287--2296, 2011.

\bibitem{Stannat2011}
W.~Stannat.
\newblock Stochastic partial differential equations: {K}olmogorov operators and
  invariant measures.
\newblock {\em Jahresber. Dtsch. Math.-Ver.}, 113(2):81--109, 2011.

\bibitem{Toelle}
J.~M. T\"{o}lle.
\newblock Stochastic evolution equations with singular drift and gradient noise
  via curvature and commutation conditions.
\newblock {\em Stochastic Process. Appl.}, 130(5):3220--3248, 2020.

\bibitem{Zhang2019}
S.-Q. Zhang.
\newblock Irreducibility and strong {F}eller property for non-linear {SPDE}s.
\newblock {\em Stochastics}, 91(3):352--382, 2019.

\end{thebibliography}
\end{document}